\newcommand{\C}{\ensuremath{{\mathbb{C}}}}
\newcommand{\Z}{\ensuremath{{\mathbb{Z}}}\xspace}
\renewcommand{\P}{\ensuremath{{\mathbb{P}}}}
\newcommand{\R}{\ensuremath{{\mathbb{R}}}}
\newcommand{\F}{\ensuremath{{\mathbb{F}}}}
\newcommand{\ra}{\rightarrow}
\newcommand\Det{\operatorname{Det}}
\newcommand\Hom{\operatorname{Hom}}
\newcommand\Aut{\operatorname{Aut}}
\newcommand\im{\operatorname{im}}
\newcommand\Sym{\operatorname{Sym}}
\newcommand\Sur{\operatorname{Sur}}
\newcommand\tensor{\otimes}
\newcommand\isom{\simeq}
\newcommand\sub{\subset}
\newcommand\tesnor{\otimes}
\newcommand\GL{\operatorname{GL}}
\newcommand\cok{\operatorname{cok}}
\newcommand\bq{\begin{equation}}
\newcommand\eq{\end{equation}}
\newtheorem{proposition}{Proposition}[section]
\newtheorem{theorem}[proposition]{Theorem}
\newtheorem{corollary}[proposition]{Corollary}
\newtheorem{lemma}[proposition]{Lemma}
\theoremstyle{remark}
\newtheorem{remark}[proposition]{Remark}
\newenvironment{definition}{\vspace{2 ex}{\noindent{\bf Definition. }}}{\vspace{2 ex}}
\newtheorem{nts}{Note to self}
\newcommand{\melanie}[1]{}
\newcommand{\E}{\mathbb{E}}
\newcommand{\cs}{\operatorname{colspace}}
\newcommand{\bS}{\bar{S}}
\newcommand{\bL}{\bar{L}}
\newcommand{\dc}{\gamma}
\title{The distribution of sandpile groups of random graphs}
\author{Melanie Matchett Wood}
\address{Department of Mathematics\\
University of Wisconsin-Madison \\ 480 Lincoln Drive \\
Madison, WI 53705 USA\\
and
American Institute of Mathematics\\360 Portage Ave \\
Palo Alto, CA 94306-2244 USA} 
\email{mmwood@math.wisc.edu}
\begin{document}
\begin{abstract}
We determine the distribution of the sandpile group (a.k.a. Jacobian) of the Erd\H{o}s--R\'{e}nyi random graph $G(n,q)$ as $n$ goes to infinity.  
Since any particular group appears with asymptotic probability $0$ (as we show), it is natural ask for the asymptotic distribution of Sylow $p$-subgroups of sandpile groups.  We prove the distributions of Sylow $p$-subgroups converge to specific distributions conjectured by Clancy, Leake, and Payne.
These distributions are related to, but different from, the Cohen-Lenstra distribution.
  Our proof involves first finding the expected number of surjections from the sandpile group to any finite abelian group (the ``moments'' of a random variable valued in finite abelian groups).    To achieve this, we show a universality result for the moments of cokernels of random symmetric integral matrices that is strong enough to handle dependence in the diagonal entries.
We then show these moments determine a unique distribution despite their $p^{k^2}$-size growth.

\end{abstract}

\maketitle

\section{Introduction}
Given a graph $\Gamma$, there is a naturally associated abelian group $S_\Gamma$, which has gone in the literature by many names, including the sandpile group, the Jacobian, the critical group, the Picard group. 
The order of $S_\Gamma$ is the number of spanning trees of $\Gamma$.
In \cite{Lorenzini2008}, Lorenzini asked about the distribution of sandpile groups of random graphs. 
  In this paper, we determine this distribution.  
In \cite{Clancy2013}, Clancy, Leake, and Payne noticed that sandpile groups did not appear to be distributed according to the well-known Cohen-Lenstra heuristics \cite{CL84}. They 
conjectured  certain new heuristics would govern how often various  abelian groups appear as sandpile groups.  In particular, we prove the distribution is as they conjectured.

For  $0<q<1$, we let $\Gamma\in G(n,q)$ be an Erd\H{o}s--R\'{e}nyi random graph on $n$ vertices with independent edge probabilities $q$.  One might first ask, for a finite abelian group $G$, what is
$
\lim_{n\ra \infty} \P(S_\Gamma\isom G)?
$
In fact, as we prove in this paper (Corollary~\ref{C:Zero}), 
$$
\lim_{n\ra \infty} \P(S_\Gamma\isom G)=0,
$$
showing this is too fine a question. 
 We will instead ask a coarser question about $S_\Gamma$. 
For example, the sandpile group  of $\Gamma$ is asymptotically almost never $\Z/8\Z \oplus \Z/ 3\Z \oplus \Z/11\Z$, but we could ask, how often is its $3$-part $\Z/3\Z$?
  A finite abelian group $G$ is simply the direct sum of its Sylow $p$-subgroups.  Let $S_{\Gamma,p}$ be the Sylow $p$-subgroup of $S_\Gamma$.  We will ask, for a fixed prime $p$, how often is $S_{\Gamma,p}$ a particular finite abelian $p$-group $G$.
\begin{theorem}\label{T:Main}
Let $p$ be a prime and $G$ a finite abelian $p$-group.
 Then for a random graph $\Gamma\in G(n,q)$, with $S_{\Gamma,p}$ the Sylow $p$-subgroup of its sandpile group,
$$\lim_{n\ra \infty} \P(S_{\Gamma,p}\isom G)=\frac{\#\{\textrm{symmetric, bilinear, perfect $\phi: G\times G
\ra \C^*$} \}}{|G||\Aut(G)|}\prod_{k\geq 0}(1-p^{-2k-1}).
$$
\end{theorem}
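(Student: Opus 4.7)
The plan is to apply the moment method for random variables valued in finite abelian groups. The sandpile group is the torsion cokernel of the graph Laplacian $L$, a symmetric $n\times n$ integer matrix whose strictly upper-triangular entries are i.i.d.\ negated Bernoulli$(q)$ variables and whose diagonal entries satisfy $L_{ii}=-\sum_{j\ne i}L_{ij}$, introducing dependence between the diagonal and off-diagonal. Tensoring with $\Z_p$ reduces the study of $S_{\Gamma,p}$ to the cokernel of $L$ modulo a large power of $p$. The approach has three parts: compute the limiting moments $M(G):=\lim_n\E[\#\Sur(S_{\Gamma,p},G)]$, prove a universality theorem for random symmetric integer matrices strong enough to accommodate the diagonal--off-diagonal dependence, and invert the moments to recover the distribution despite their super-polynomial growth.

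For the moment calculation, expand $\#\Sur(\coker(L\otimes\Z_p),G)$ as a sum over $F=(v_1,\dots,v_n)\in G^n$ of the indicator that $F$ is surjective and $F\circ L\equiv 0\bmod|G|$. Taking expectation and Fourier-expanding each vanishing condition over the dual group $G^\vee$ yields, after interchanging sums, an expression of the form
\[
\frac{1}{|G|^n}\sum_F \sum_{(\chi_1,\dots,\chi_n)\in (G^\vee)^n} \prod_{i\le j}\E\!\left[\bigl(\chi_j(v_i)\chi_i(v_j)\bigr)^{L_{ij}}\right].
\]
Because $L$ is symmetric, the symmetric combination $\chi_j(v_i)\chi_i(v_j)$ naturally appears. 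The dominant contribution is from the locus where all these are trivial roots of unity, which is precisely the locus where $(v_i,v_j)\mapsto\chi_j(v_i)$ is a symmetric bilinear pairing on the image of $F$; summing over surjective $F$ and perfect $\phi$ produces the numerator in the theorem. The main obstacle is proving this asymptotic is genuinely universal: it must hold even though $L_{ii}$ is forced to be the negated row sum. I would proceed by conditioning on the off-diagonal entries, noting that the forced diagonal inserts an additional factor $\chi_i(v_i)^{-\sum_j L_{ij}}$ per row, then using independence across rows and character-sum estimates to show these extra factors perturb the count only by a lower-order term. I expect this dependence-handling to be the single hardest technical piece.

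The final step is moment inversion. Since $M(G)$ grows like $p^{k^2}$ when $|G|=p^k$, Carleman-type criteria fail and moment-determination has to be proved directly. I would establish that for probability measures on isomorphism classes of finite abelian $p$-groups, the $M(G)$ determine the measure via an explicit inclusion-exclusion, expressing $\P(X\isom G)$ as an alternating sum of terms $M(H)/|\Aut H|$ over $H$ ``above'' $G$. The $1/|\Aut H|$ weights should tame the $p^{k^2}$ growth sufficiently for the alternating series to converge absolutely, and recognizing the resulting series via a Hall-type identity on finite abelian $p$-groups should produce the closed form in the theorem, with $\prod_{k\ge 0}(1-p^{-2k-1})$ emerging as the normalization that forces total mass to equal one. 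The two principal difficulties I anticipate are the universality step under dependent diagonals and the absolute convergence of this moment-inverting series.
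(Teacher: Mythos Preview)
Your architecture (compute the $G$-moments, prove universality, invert the moments) matches the paper's, but there are two genuine gaps and one significant mis-identification.

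\textbf{The moment value is wrong.} The limiting $G$-moment is not the count of symmetric perfect pairings on $G$; it is $|\wedge^2 G|$ (Theorem~\ref{T:Mom}). In the Fourier expansion, for a fixed surjection $F$ the ``special'' characters $C$ with all coefficients $E(C,F,i,j)=0$ are exactly $\im(s_F)\cong\wedge^2 G^*$, and the condition $e(C(v_j),F(v_i))+e(C(v_i),F(v_j))=0$ corresponds to \emph{alternating}, not symmetric, forms. So your ``dominant locus'' description is off, and the numerator in the theorem does not fall out of the moment sum. The paper instead uses universality to say that the limiting distribution equals that of cokernels of uniform random symmetric matrices over $\Z/a\Z$, and then cites \cite{Clancy2014} for the explicit evaluation of the latter (Corollary~\ref{C:Main}). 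Your proposed ``Hall-type identity'' route would have to replace this entire comparison step.

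\textbf{The error-term analysis is the real work, and it is not the diagonal.} You identify the diagonal dependence as the hardest piece, but the paper dispatches it cleanly: replace the Laplacian by a symmetric matrix $X$ with \emph{independent} diagonal, enlarge $F$ to $\tilde F\in\Hom(V,G\oplus R)$ by adjoining the all-ones functional $F_0$, and observe that conditioning on $F_0X=0$ recovers the Laplacian distribution (proof of Theorem~\ref{T:MomGraphs}). The genuinely hard part---occupying Sections~\ref{S:MomStart}--\ref{S:depth}---is bounding $\sum_{(F,C)\text{ non-special}}|\E\zeta^{C(FX)}|$. This requires stratifying $F$ by \emph{depth} (how far $F$ is from being a code) and, for each code $F$, stratifying $C$ into special, weak, and robust, with a delicate balance between the number of pairs in each stratum and the exponential decay available there (Lemmas~\ref{L:estweakC}, \ref{L:Quad}, \ref{L:nonspec}, \ref{L:probdepthestimate}). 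Your ``character-sum estimates'' placeholder does not come close to this structure; without it the non-dominant terms swamp the main term.

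\textbf{Moment inversion by alternating inclusion--exclusion is risky.} The paper notes explicitly that the moments grow like $p^{k^2/2}$, at the boundary where Carleman fails and where even a direct multivariable generalization of Heath-Brown's analytic lower-triangularization is too weak; the functions in Lemma~\ref{L:Hacts} are ``perfectly optimized'' to make Theorem~\ref{T:Momdet} go through. A M\"obius-style alternating sum $\sum_H \mu(G,H)\,M(H)/|\Aut H|$ may well converge, but you would have to prove absolute convergence in exactly this borderline regime, and then separately justify interchanging the limit in $n$ with the infinite sum (the paper does this via a dominated-convergence argument in Theorem~\ref{T:MomDetDetail}). Neither step is addressed in your sketch.
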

Note the product on the right does not involve $G$ and plays the role of a normalization constant.
Also, the entire right-hand side above does not depend on $q$, the edge-probability of the random graph.
If $G=\bigoplus_{i} \Z/p^{\lambda_i}\Z$ with $\lambda_1\geq\lambda_2\geq\cdots$ and  
 $\mu$ is the transpose of the partition $\lambda$, then we can express the fraction on the left very concretely as
\begin{equation}\label{E:main}
\frac{\#\{\textrm{symmetric, bilinear, perfect $G\times G
\ra \C^*$} \}}{|G||\Aut(G)|}=
p^{-\sum_i \frac{\mu_i(\mu_i+1)}{2}} {\prod_{i=1}^{\lambda_1} \prod_{j=1}^{\lfloor \frac{\mu_i-\mu_{i+1}}{2} \rfloor} (1-p^{-2j})^{-1}}
\end{equation}
(see proof of Lemma~\ref{L:Aut}).
For large $p$, every factor other than the leading power of $p$ is near $1$.
For example, if $G=\Z/p^r\Z$, the right hand side of Theorem~\ref{T:Main} is $\approx p^{-r}$.
If $G=(\Z/p\Z)^r$, the right hand side of Theorem~\ref{T:Main} is $\approx p^{-r(r+1)/2}$. This explains why cyclic groups are seen as sandpile groups so much more often than higher rank groups of the same size.
For example, the Sylow $7$-subgroup of a sandpile group is $\Z/49$ about $7$ times as often as often as it is $(\Z/7\Z)^2$.


In fact, we show quite a bit more than Theorem~\ref{T:Main}.  In particular (see Corollary~\ref{C:Main}) for any finite set of primes we give the asymptotic probabilities of particular Sylow subgroups at all of those primes, and we find that the Sylow subgroups at different primes behave (asymptotically) independently. 

To prove Theorem~\ref{T:Main}, we first prove a complete set of moments for the random groups $S_\Gamma$.
Let $\Sur(A,B)$ denote the set of surjective homomorphisms from $A$ to $B$.
\begin{theorem}\label{T:Mom}
Let $G=\bigoplus_{i=1}^r \Z/a_i\Z$ be a finite abelian group with $a_r | a_{r-1} | \cdots | a_1$.
Then for a random graph $\Gamma\in G(n,q)$, with $S_{\Gamma}$ its sandpile group,
$$
\lim_{n\ra \infty} \E(\#\Sur(S_{\Gamma}, G))=\prod_i a_i^{i-1}.
$$
\end{theorem}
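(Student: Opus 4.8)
The plan is to run the standard ``moments $=$ expected surjection counts'' machinery. First realize the sandpile group as $S_\Gamma = \coker(\tilde L)$, where $\tilde L$ is a reduced Laplacian of $\Gamma$ (delete one row and the corresponding column from the graph Laplacian): a random symmetric $(n-1)\times(n-1)$ integer matrix whose strictly upper triangular entries are i.i.d., equal to $-1$ with probability $q$ and $0$ otherwise, and whose $i$th diagonal entry is $\deg_\Gamma(i) = -\sum_{j\ne i}\tilde L_{ij} + \epsilon_i$ with $\epsilon_i$ an independent $\{0,1\}$ variable recording the edge to the deleted vertex. Using $\#\Hom(\coker(\tilde L),G)=\#\{v\in G^{n-1}:\tilde L v=0\text{ in }G^{n-1}\}$ and isolating surjective $v$,
\[
\E\,\#\Sur(S_\Gamma, G) \;=\; \sum_{\substack{v\in G^{n-1}\\ \langle v_1,\dots,v_{n-1}\rangle=G}}\P\big(\tilde L v=0\text{ in }G^{n-1}\big).
\]

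For fixed $v$ I would compute $\P(\tilde L v=0)$ by Fourier inversion on $G^{n-1}$. Since every diagonal entry of $\tilde L$ is a linear combination of the off-diagonal ones (precisely the ``dependence in the diagonal'' the argument must tolerate), expanding $\langle w,\tilde L v\rangle=\sum_{k<j}\xi_{kj}\langle w_k-w_j,\,v_k-v_j\rangle+\sum_k\epsilon_k\langle w_k,v_k\rangle$ in terms of the independent Bernoulli variables $\xi_{kj},\epsilon_k$ gives
\[
\P(\tilde L v=0)\;=\;\frac{1}{|G|^{n-1}}\sum_{w\in\hat G^{\,n-1}}\ \prod_{1\le k<j\le n-1}\!g\big(\langle w_k-w_j,\,v_k-v_j\rangle\big)\ \prod_{k=1}^{n-1}g\big(\langle w_k,v_k\rangle\big),
\]
where $e(\cdot)$ is the standard character of $\Q/\Z$ and $g(t)=(1-q)+q\,e(t)$ satisfies $|g(t)|\le1$, with $g(t)=1$ iff $t=0$ and $|g(t)|\le1-c$ for every nonzero $t\in\tfrac1{a_1}\Z/\Z$ and some $c=c(q,G)>0$. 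The main term comes from the $w$ for which every factor equals $1$: a direct check shows these are exactly the tuples $w_k=\bar B(v_k,-)$ for an alternating bilinear form $\bar B\colon G\times G\to\Q/\Z$ — antisymmetry forces the $\Q/\Z$-valued form $(e_k,e_j)\mapsto\langle w_k,v_j\rangle$ on $\Z^{n-1}$ to vanish on $\ker(v)$ in either variable, hence to descend to $\Z^{n-1}/\ker(v)\cong G$ — and for $G=\bigoplus_i\Z/a_i\Z$ with $a_r\mid\cdots\mid a_1$ the number of such forms is $\prod_{i<j}\gcd(a_i,a_j)=\prod_i a_i^{i-1}$. Thus heuristically $\P(\tilde L v=0)\approx\prod_i a_i^{i-1}/|G|^{n-1}$, and since $\#\Sur(\Z^{n-1},G)=(1+o(1))|G|^{n-1}$, summing over $v$ should produce the claimed limit.

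Two uniform error estimates are needed, and the second is the main obstacle. Call $v$ \emph{generic} if for every maximal subgroup $H<G$ at most $(1-\delta)(n-1)$ of the $v_k$ lie in $H$, for a small fixed $\delta$ depending only on $|G|$. A union bound over $H$ plus a binomial-entropy estimate shows there are only $o(|G|^{n-1})$ non-generic surjective $v$, and for those one bounds $\P(\tilde L v=0)$ crudely (after casework on the subgroup in which the $v_k$ concentrate it is exponentially small), so their total contribution to the sum is $o(1)$. For generic $v$ one must instead show the non-main-term $w$ contribute $o(1)$ to $\sum_w\prod|g(\cdots)|$; the crux is a quantitative trade-off — if $w$ differs from every main-term tuple in $t$ coordinates then genericity of $v$ forces at least $\gg tn$ of the arguments $\langle w_k-w_j,v_k-v_j\rangle$ and $\langle w_k,v_k\rangle$ to be nonzero — after which the sum over $\hat G^{\,n-1}$ becomes a convergent geometric-type series. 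Establishing this trade-off uniformly in $n$ is essentially the universality theorem for cokernels of random symmetric matrices with dependent diagonal mentioned in the introduction, and is where the real difficulty lies. Granting it, one assembles
\[
\sum_v\P(\tilde L v=0)=(1+o(1))\,\#\{\text{generic }v\}\cdot\frac{\prod_i a_i^{i-1}}{|G|^{n-1}}+o(1)\ \longrightarrow\ \prod_i a_i^{i-1}.
\]
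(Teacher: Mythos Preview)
Your setup and main-term identification are correct and match the paper's in spirit: expand $\P(\tilde L v=0)$ by Fourier inversion, identify the characters $w$ for which every factor equals $1$ as the alternating bilinear forms on $G$, and count these as $|\wedge^2 G|=\prod_i a_i^{i-1}$. Your treatment of the diagonal via the reduced Laplacian and the extra Bernoulli variables $\epsilon_k$ is a legitimate alternative to the paper's trick of enlarging the target to $G\oplus R$ and conditioning on $F_0X=0$.

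The genuine gap is in the error estimate. The trade-off you state --- ``if $w$ differs from every main-term tuple in $t$ coordinates then $\gg tn$ of the arguments are nonzero'' --- is false. Take $G=\Z/p\Z$ (so the only main-term tuple is $w=0$), a generic $v$, and $w_k=u$ constant with $0\ne u\in\hat G$. The Hamming distance to the main term is $t=n-1$, yet every pairwise argument $\langle w_k-w_j,\,v_k-v_j\rangle$ vanishes and only the $\lesssim n$ diagonal arguments $\langle w_k,v_k\rangle=u(v_k)$ are nonzero; so you get order $n$ nonzero factors, not order $tn\sim n^2$. The Hamming-distance parameter does not control the number of nonzero coefficients.

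What the paper actually proves for generic $v$ (its ``codes of distance $\delta n$'') is a two-tier dichotomy for $w$: either $w$ is \emph{weak} (roughly, after deleting fewer than $\gamma n$ coordinates, $w$ factors through $v$ via some homomorphism $G\to G^*$), in which case there are at most $C_G\binom{n}{\lceil\gamma n\rceil-1}|G|^{\gamma n}$ such $w$ and each non-special one still has $\ge\delta n/2$ nonzero coefficients; or $w$ is \emph{robust}, in which case there are genuinely $\gg n^2$ nonzero coefficients. Balancing these two regimes is what makes the sum over $w$ converge; a single parameter $t$ does not suffice.

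Your handling of non-generic $v$ is likewise too compressed. ``Exponentially small after casework on the subgroup'' hides a real argument: the paper stratifies non-codes by \emph{depth} $D$ (the largest index $[G:FV_\sigma]$ achievable with $|\sigma|<\ell(D)\delta n$), bounds the number of $F$ at each depth, and for each such $F$ bounds $\P(FX=0)$ by separating the coordinates landing in the relevant subgroup $H$ from the rest and then \emph{applying the code bound to the induced map into $H$}. The count and the probability estimate at each depth are balanced against each other; neither alone is exponentially small enough.
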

The product $\prod_i a_i^{i-1}$ occurs as $|\wedge^2 G|$. 
 We refer to $\E(\#\Sur(S_{\Gamma}, G))$ as the \emph{$G$-moment} of $S_{\Gamma}$. 
 For comparison, if $H$ is a random group drawn according to the Cohen-Lenstra heuristics, then for all finite abelian groups $G$ the $G$-moment of $H$ is $1$ \cite[Proposition 4.1(ii) and Corollary 3.7(i)]{CL84} (see also \cite[Section 8]{EVW09}), whereas in our case the $G$-moments depend on the group $G$.
We also obtain an exponentially decreasing (in $n$) error term (see Theorem~\ref{T:MomGraphs})  for Theorem~\ref{T:Mom}. 

We then show (in Section~\ref{S:MomDet}) that the moments in Theorem~\ref{T:Mom} determine the distribution as given in Theorem~\ref{T:Main}, despite the moments' growing too fast to use the usual probabilistic methods to show that moments determine a unique distribution.
We also deduce many other statistics of sandpile groups of random graphs, including the distribution of their $p$-ranks  (see Corollaries~\ref{C:first} and \ref{C:prank}).  For example, the probability that 
$p$ divides $|S_\Gamma|$ goes to $1-\prod_{k\geq 0}(1-p^{-2k-1}).$
Even more concretely, the probability that a random graph $\Gamma\in G(n,q)$ has an even number of spanning trees goes to $\approx .5806$.
  We conclude in Corollary~\ref{C:cyclic} that the probability that $S_\Gamma$ is cyclic is asymptotically at most
$$
\zeta(3)^{-1}\zeta(5)^{-1}\zeta(7)^{-1}\zeta(9)^{-1}\zeta(11)^{-1}\cdots \approx .7935212,
$$
where $\zeta$ is the Riemann zeta function, differing from a conjectured value \cite[Conjecture 4.2]{Wagner2000}, and in Corollary~\ref{C:sqf} that the probability that the number of spanning trees of $\Gamma$ is square-free is asymptotically at most
$
\zeta(2)^{-1}\zeta(3)^{-1}\zeta(5)^{-1}\zeta(7)^{-1}\zeta(9)^{-1}\cdots \approx .48240306,
$
again differing from a conjectured value \cite[Conjecture 4.4]{Wagner2000}.
See also \cite[Section 4]{Lorenzini2008} for some questions and results on the topic of how often the sandpile group of a graph is cyclic.

\subsection{Sandpile groups}
For a general introduction to sandpile groups and some beautiful pictures of sandpiles, see the Notices' ``What is \dots a sandpile?'' \cite{Levine2010}.
There is also an overview given in \cite{Norine2011} of the way the group has arisen in various contexts.  One convenient definition is that $S_\Gamma$ is the cokernel of the reduced Laplacian $\Delta_\Gamma$ of $\Gamma$ (see \cite{Lorenzini1990} and also Section~\ref{S:sandpile}).   Thus $|S_\Gamma|=|\Det(\Delta_\Gamma)|$, which is the number of spanning trees of $\Gamma$ by  Kirchhoff's matrix tree theorem.   

The name ``sandpile'' comes from work studying the dynamics of a sandpile, which is a situation in which there is a number of chips at each vertex of a graph, and a vertex with at least as many chips as its degree can topple, giving a chip to each of its neighbors.  
(This is also called a chip-firing game, as originally studied in \cite{Bjorner1991}.)
The sandpile group parametrizes certain configurations of chips, called recurrent sandpiles, and is intimately related to the dynamics of the sandpile.  This sandpile model was first studied in statistical physics in 1988 \cite{Bak1988} (see also \cite{Dhar1990,Gabrielov1993,Gabrielov19930, Biggs1999}).
The sandpile group is also related to the Tutte polynomial of the graph. A generating function for counting elements of the sandpile group, as recurrent sandpiles, by their number of chips (on non-sink vertices) is given by $T(1,y)$, where $T$ is the Tutte polynomial
 \cite{Lopez1997,Gabrielov1993,Gabrielov19930}. See \cite{Holroyd} for a survey of some of these aspects of sandpiles.  
  
In an analogy between Riemann surfaces and graphs, the sandpile group has been studied and called the Jacobian (or Picard group or critical group) of the graph \cite{Bacher1997,Biggs1997}.  In this context, the group is a discrete analog of the Jacobian of a Riemann surface.  In fact, this analogy can be made precise, and the group of components of the N\'{e}ron model of a Jacobian of a curve over a local field is given as a Jacobian of a graph \cite{Lorenzini1989, Bosch2002}.  In this analogy, the order of the sandpile group appears in the ``analytic class number formula'' for graphs \cite{Horton2006}, and  versions of Riemann-Roch and the Riemann-Hurwitz formulas are known for the Jacobian of graphs \cite{Baker2007, Baker2009}.  

In part motivated by these many connections, the sandpile group has also been studied as an interesting invariant of graphs in its own right and determined for many families of graphs (see the Introduction to \cite{Alfaro2012} for pointers to some of this vast literature). 

\subsection{Why those probabilities: the relation to the Cohen-Lenstra heuristics}
Some experts had speculated that sandpile groups of random graphs might satisfy a Cohen-Lenstra heuristic.
The Cohen-Lenstra heuristics  \cite{CL84} were developed to predict the distribution of ideal class groups of quadratic number fields, which are finite abelian groups that measure the failure of unique factorization in quadratic rings of algebraic integers such as $\Z[\sqrt{-5}]$.  The basic principle is that a finite abelian group $G$ should occur with probability proportional to $|\Aut(G)|^{-1}$, barring any known bias in how groups appear. 
It is a well-known phenomenon that objects often occur inversely proportionally to their number of automorphisms.
 As in our case, with this heuristic, each group must appear with probability $0$ because the sum of $|\Aut(G)|^{-1}$ over all finite abelian groups is infinite, but as in this paper, the usual approach is to study the occurrence of a given Sylow $p$-subgroup $G$, which is expected to occur with \emph{positive} probability proportional to $|\Aut(G)|^{-1}$.  

As a first guess, this is a good one, and in fact the expected value given in Theorem~\ref{T:Mom} when $G$ is cyclic agrees with the average from the Cohen-Lenstra distribution, as was noticed empirically in \cite{Clancy2014}.   But higher averages do not agree with those from the Cohen-Lenstra distribution.  The Cohen-Lenstra distribution has been generalized to many other distributions where there is some additional feature of the relevant finite abelian group.   Even in the original Cohen-Lenstra paper \cite{CL84}, they modified the heuristic to predict the distribution of Sylow $p$-subgroups of class groups of real quadratic and higher degree abelian number fields for ``good'' primes $p$.
Gerth \cite{Ger87,Ger87b} gave different heuristics to predict the distribution for ``bad primes''.
Cohen and Martinet gave different heuristics that predict the class groups of any kind of extension of any number field \cite{CM90}.  
New heuristics have been suggested by Malle \cite{Mal08,Malle2010} and Garton \cite{Garton2012} to replace Cohen and Martinet's heuristics when there are roots of unity in the base field.
(Note that our moments in Theorem~\ref{T:Mom} agree with the ``$q=1$ case'' of the moments in \cite[Corollary 3.1.2]{Garton2012}, where the quotes are because the work in \cite{Garton2012} is motivated by work over a function field over $\F_q$.)
Most closely related to the situation for sandpile groups are Delaunay's heuristics for the distribution of Tate-Shafarevich groups of elliptic curves \cite{Delaunay2001}  (see also \cite{Bhargava2013}).  These groups are abelian and conjecture ally finite, and if finite have a non-degenerate, alternating, bilinear pairing.  So Delaunay formulated heuristics that replaced $\Aut(G)$ with automorphisms of $G$ that preserve the pairing.

In fact, the sandpile group comes with a canonical perfect, symmetric, bilinear pairing (see \cite{Lorenzini2000,Bosch2002,Shokrieh2010}), and so Clancy, Leake, and Payne  \cite{Clancy2013} guessed that this pairing should play a role in the distribution.  They
conjectured, based on their empirical results, that a particular group $G$ with pairing $\langle,\rangle$ should appear with probability proportional to
$|G|^{-1}|\Aut(G,\langle,\rangle)|^{-1}$.  Unlike the situation for alternating pairings, where each isomorphism class of group has a unique isomorphism type of pairing,  there are many isomorphism types of symmetric pairings, especially for $2$-groups.  The right-hand side of Theorem~\ref{T:Main} is what we obtain when summing 
the heuristic of \cite{Clancy2013} over all pairings for a given group.  It would be very interesting to have a refinement of Theorem~\ref{T:Main} that determines how often the various pairings occur for each group, and in particular to see if they indeed agree with the prediction of \cite{Clancy2013}.

\subsection{Connections to random matrices}
When $\Gamma$ is a random graph, the reduced Laplacian $\Delta_\Gamma$ is a random matrix, so one naturally arrives at the question of cokernels of random matrices with integer coefficients. 
In \cite{Clancy2014}, Clancy, Leake, Kaplan, Payne and the current author show that for a random symmetric matrix over the $p$-adic integers $\Z_p$, drawn with respect to Haar measure, the cokernels are distributed as in Theorem~\ref{T:Main}.  This is an analog of the work of Freidman and Washington \cite{FW89} that showed  that cokernels of random matrices over  $\Z_p$, drawn with respect to Haar measure, are distributed according to the Cohen-Lenstra heuristics.

For more general distributions of random matrices over $\Z_p$, Maples 
has a universality result showing that  random matrices over $\Z_p$ with independent, identically distributed entries have cokernels distributed according to the Cohen-Lenstra heuristics \cite{Map13a}.  
 Since the Sylow $p$-subgroup of the cokernel is trivial if and only if the matrix is invertible modulo $p$, determining the distribution of the Sylow $p$-subgroups is a refinement of the question of singularity of random matrices, which has been  well studied over $\R$, and was studied by Maples over finite fields in \cite{Maples2010}.  
Maples's work builds on and uses ideas from the work on estimating the singularity probability of a random matrix with $\pm 1$ i.i.d. entries of Kahn, Koml\'{o}s, Szemeredi \cite{Kahn1995} and Tao and Vu \cite{Tao2006,Tao2007} (see also \cite{Komlos1967,Komlos1968,Bourgain2010} for work on the singularity probability of a random matrix over $\R$).  This work all relies crucially on the independence of the entries of the matrix.

Our matrices $\Delta_\Gamma$ are symmetric, which adds significant difficulty over the case of independent entries.
In the case of singularity probability of symmetric matrices over $\R$ with independent entries  on and above the diagonal, Costello, Tao, and Vu obtained the first good bound \cite{Costello2006}, with improvements by Costello \cite{Costello2013} and Nguyen \cite{Nguyen2012}, and the current best bound due to Vershynin \cite{Vershynin2011}. 
The methods of these papers have the potential to address the question of $p$-ranks of symmetric matrices and their cokernels.  (For example, Maples has posted an announcement of results \cite{Map13b} giving the distribution of ranks of random symmetric matrices over $\Z/p\Z$.)  However, the previous methods for studying the singularity probability are not suited for determining more than the $p$-ranks of symmetric matrices, and so for example cannot distinguish between the groups $\Z/p\Z$ and $\Z/p^2\Z$.  (There is also the added complication in our case that the diagonal entries of 
the matrix $\Delta_\Gamma$ are \emph{not} independent from the rest of the entries.)

In order prove our main result, we therefore take a rather different approach than the work discussed above, though with some similar themes. 
On the path to our results on sandpile groups, we also give a universality result purely in the context of random matrices.  
In particular, we determine that the universal distribution of Sylow $p$-subgroups of cokernels of symmetric random matrices over $\Z$ (with independent entries on and above the diagonal) is the distribution in  Theorem~\ref{T:Main} (see Theorem~\ref{T:MomMat} and Remark~\ref{R:Mat}).   We also prove asymptotic independence of the joint distribution for finitely many primes $p$.

\subsection{Our method to determine the moments}

We prove Theorem~\ref{T:Mom} via a result in which a much more general random symmetric matrix replaces the graph Laplacian (see Theorem~\ref{T:MomMat}).  When then prove (in Theorem~\ref{T:Momdet}) that these moments in fact determine a unique distribution.   These are universality results that show for a large class of random symmetric matrices over $\Z$, their cokernels  have the same moments and the same distribution, asymptotically.
We are thus able to use the case of cokernels of \emph{uniform} random symmetric matrices over $\Z/a\Z$, whose distributions were determined in \cite{Clancy2014}, and by our universality results,  deduce that the moments and distribution of this  simple case hold in great generality.

There are some interesting features of our method, in particular in comparison to previous work.
    First of all, we only have to consider linear Littlewood-Offord problems, and not quadratic ones (as in \cite{Costello2006,Costello2013,Nguyen2012,Vershynin2011}), 
    even though our matrices are symmetric.  Second, our method can easily handle the dependence of 
    the diagonal of the Laplacian on the rest of the entries. 
 Third, we in fact obtain the moments, which are interesting averages in their own right and have 
 been studied at length for finite abelian group valued random variables in the work related to the 
 Cohen-Lenstra heuristics.  (For example, Davenport and Heilbronn \cite{DH71} determined the $\Z/3\Z$-moment of the class groups of quadratic fields.  See also \cite{Bhargava2005, EVW09,EVW12, Fouvry2006, FK07, Garton2012} for other examples of results in number theory about certain $G$-moments of class groups.) 
 Fourth, the moments only depend on the reduction of the matrix entries from $\Z$ to $\Z/a\Z$ for some $a$, so we are able to work with random symmetric matrices over $\Z/a\Z$.  (Of course, we need all the moments, so we must work over  $\Z/a\Z$ for each $a$.)

Theorem~\ref{T:Mom} gives the expected number of surjections $S_\Gamma \ra G$.
The sandpile group is $S_\Gamma=\Z^{n-1}/\Delta_\Gamma \Z^{n-1}$, so it suffices to determine
to probability that a surjection $F:\Z^{n-1}\ra G$ descends to $S_\Gamma$ (for each $F$).
Equivalently, we determine the probability that $F\Delta_\Gamma=0$.  This is a system of linear equations in the coefficients of $\Delta_\Gamma$.  The system is generated by on the order of $n$ equations and is in $\binom{n}{2}$ variables.  (This contrasts with the usual Littlewood-Offord problem
which is $1$ equation in $n$ variables.)  Unfortunately, the natural generators for this system have only order $n$ of the $\binom{n}{2}$ coefficients non-zero! 
 The system of equations is parametrized by $\Hom(\Z^{n-1},G^*)=(G^*)^{n-1}$, where $G^*$ is the group of characters on $G$. Further, some nontrivial $C\in(G^*)^{n-1}$ (we call these \emph{special}) turn out to  give equations in which \emph{all} of the coefficients are $0$, and which $C$ are special depends on the choice of $F$.
 
So while we have linear Littlewood-Offord type problems over $\Z/a\Z$ (with $a$ not necessarily prime, and with a growing number of linear equations instead of a single equation), the difficulty is to understand what structural properties of $F$ and $C$ influence how many of the coefficients of these equations are $0$. 
We develop two new concepts, \emph{depth} and \emph{robustness} to capture this key structure. 
We will give a brief overview of these concepts now; full details are included as the concepts arise in the paper.

 For $\sigma\sub [n-1]$, let $V_\sigma$ denote the column vectors in $\Z^{n-1}$ that have $\sigma$ entries $0$.  
\emph{Depth}  captures the structural properties of $F$ that influence how
 many non-zero coefficients appear in our system of equations.  For an integer $D$ with prime factorization $\prod_i p_i^{e_i}$, let $\ell(D)=\sum_i e_i$.
 
\begin{definition}
The \emph{depth} (depending on a parameter $\delta>0$) of a surjection $F:\Z^{n-1}\ra G$ is the maximal positive $D$ such that
there is a $\sigma\sub [n-1]$ with $|\sigma|< \ell(D)\delta (n-1)$ such that $D=[G:FV_\sigma]$, or is $1$ if there is no such $D$. 
\end{definition}

\emph{Robustness} captures the structural properties of $C$ that influence how
 many non-zero coefficients appear in a particular equation, given $F$.  Viewing $F\in\Hom(\Z^{n-1},G)$ and $C\in \Hom(\Z^{n-1},G^*)$, we can add them to obtain $F+C\in \Hom(\Z^{n-1},G\oplus G^*)$.
  
\begin{definition}
Given $F$, we say $C$ is \emph{robust} for $F$ (depending on a parameter $\gamma>0$),  if for every $\sigma\sub[n-1]$ with $|\sigma|<\gamma (n-1)$,
$$
\ker(F+C|_{V_\sigma} )\ne \ker(F|_{V_\sigma} ).
$$
\end{definition}

We identify the special $C$ exactly in terms of $F$. Despite their rarity, the special $C$ give the limit in Theorem~\ref{T:Mom} (the main term in Theorem~\ref{T:MomGraphs}).  The remaining cases form a complicated error term that we must bound.
  For $F$ of depth $1$, for non-special $C$ we prove the associated equation has at least order of $n$ non-zero coefficients, and for robust $C$ we prove the associated equation has at least order of $n^2$ non-zero coefficients. 
 For each larger depth, we compare $F$ to a combination of a depth $1$ ``$F$'' for a subgroup of $G$ (where we use the above) and an ``$F$'' for a quotient group of $G$ (where we use an Odlyzko-type bound).  There is a delicate balance between the number of non-zero coefficients we can get in each case and the number of pairs $(F,C)$ that fall into that case.

Finally, to deal with the dependence of the diagonal in $\Delta_\Gamma$, we actually do all of the above for a matrix with independent diagonal entries and then  enlarge $F$ to condition on what we require of the diagonal.

\subsection{Our method to determine the distribution from the moments}
The question of when the moments of a distribution determine a unique distribution is well-studied in probability and called the moment problem.
Roughly, if the sequence of moments of a random variable
does not grow too fast, then the distribution of the random variable is determined by the moments.  For example, Carleman's condition states that if
$
\sum_{k=1}^{\infty} m_{2k}^{-1/(2k)}
$
diverges, then there is a unique distribution on $\R$ having $m_k$ as the $k$th moment \cite[Section 2.3e]{Durrett2007}.  The standard counterexample is based on the lognormal density and has $k$th moment $e^{k^2/2}.$  In particular, there are  many $\R$-valued random variables $X$ with distinct distributions, such that for every $k$, we have $\E(X^k)=e^{k^2/2}$.

In our problem, the moments grow like the lognormal counterexample.  One can see this even if we were only interested in the $p$-ranks of sandpile groups.  Recall our moments are indexed by groups, but we will compare some of them to a usual moment.
Note that $\Hom(S_\Gamma,(\Z/p\Z)^k)=X^k$ for $X=p^{p\textrm{-rank}(S_\Gamma)}$.
 By adding Theorem~\ref{T:Mom} over all subgroups $G$
of $(\Z/p\Z)^k$ we conclude $\E(X^k)$ is of order $p^{(k^2-k)/2}$.
 However, the fact that our random variable $X$ can only take values in powers of $p$ makes the problem of recovering the distribution not completely hopeless.  

If we  were interested just in $p$-ranks (and did not want to distinguish between $\Z/p\Z$ and $\Z/p^2\Z$ for example), we could apply a method of Heath-Brown \cite[Lemma 17]{Heath-Brown1994-1}.  His strategy can be used to show that if $X$ is a random variable valued in $\{1,p,p^2,\dots\}$, and there is a constant $C$ such that for all integers $k\geq 0$ we have $\E(X^k) \leq C p^{k^2/2}$, then the distribution of $X$ is determined by its moments.  
Heath-Brown uses coefficients of precisely constructed analytic functions of one variable to lower-triangularize the infinite system of equations given by the moments.  
(See also \cite[Section 4.2]{Fouvry2006} which has a similar result but with a method that does not generalize to suit our needs.)

In order to recover the distribution of the entire Sylow $p$-subgroups of the sandpile group, we develop a generalization of Heath-Brown's method that replaces the analytic functions of one variable with analytic functions of several complex variables.  
However, the straightforward generalization which uses Health-Brown's functions for each variable is too weak for our purposes.  We perfectly optimize a function in each variable for our needs, and our moments are \emph{just} small enough for it to work.  In the end, we prove that mixed moments determine a unique joint distribution in cases where, as above, the moments are growing too fast to use Carleman's condition but where we have a restriction on the values taken by the random variables.
 
\subsection{Further questions}

This work raises many further questions. 
While we obtain an error bound in $n$ for Theorem~\ref{T:Mom} (see Theorem~\ref{T:MomGraphs}), we have not made explicit the dependence of the constant in that error bound on $G$.  It would be interesting to know if such an explicit dependence could translate into an error bound in $n$ for Theorem~\ref{T:Main}, and of what size.

We also work with $p$ fixed, and therefore our methods are not ideal for questions that require consideration of $p$ large compared to $n$, such determining the probability that $S_\Gamma$ is cyclic (for which we obtain only an upper bound, though at what, in light of our results, seems very likely to be the correct answer).  It would be very interesting to know if our approach could be combined with ideas from \cite{Map13a}, which are uniform in $p$, to determine the probability that $S_\Gamma$ is cyclic.
As a byproduct of understanding the group structure, we have determined the distribution of the size of $|S_\Gamma|$ in the $p$-adic metric, but it is also natural to ask about the distribution of $|S_\Gamma|$ as a real number.  While from the above we see that it is any particular size with asymptotic probability $0$, we can ask about the probability that it lies in appropriately sized intervals.  
As discussed above, it would also be nice to have results on the distribution of the pairing on $S_\Gamma$.

Another interesting question is whether results such as Theorems~\ref{T:Main} and \ref{T:Mom} hold
for other models of random graphs or whether the values of the probabilities and the moments change (see \cite[Remark 2]{Clancy2014}).
Our results already allow the edge probabilities to vary as long as they are independent and bounded above and below by a constant.  However, it would be interesting to know if one obtains the same distribution on sandpile groups for sparser graphs, in particular in the case of $G(n,q)$ when $q\geq (1+\epsilon)\log (n)/n$ in which the graph is still asymptotically almost surely connected.   
There is a analogous question for denser graphs (and if $q$ gets too large, the graphs will be too close to complete graphs and will definitely not follow the distribution of Theorem~\ref{T:Main}).
 It would also be interesting to determine the distribution of sandpile groups of $r$-regular graphs.  

In this paper we work with random symmetric matrices as the basic object, and we have already had to deal with one kind of dependency (beyond the symmetry) in our matrices---the dependency of the diagonal in the graph Laplacian on the other entries.  We specifically developed our method to handle this dependency easily, and it should be able to handle other linear dependencies on the columns of a symmetric matrix as well, as long as they apply to all the columns.  It would be nice to understand whether our approach can be extended handle to linear dependencies that only apply to some of the columns, and in general to what extent dependencies affect the outcome of the distribution of the cokernels of random symmetric matrices.  
Another interesting case to consider is one in which some of the entries of the matrix are fixed, such as for the adjacency matrix of a random graph in which case the diagonal entries are $0$.  The cokernel of the adjacency matrix is called the \emph{Smith group}, and has been studied e.g. in \cite{Chandler2014,Ducey2013}.

In this paper, our method finds the actual values of the probabilities occurring in Theorem~\ref{T:Main} by 
using our universality results that say the values are the same for a large class for random matrices, and then citing a computation for the case of uniform random symmetric matrices over $\Z/a\Z$.  There are further statistics of these uniform random matrices over $\Z/a\Z$, which if determined, would, using our Corollary~\ref{C:first}, immediately give more statistics of sandpile groups of random graphs. 
See the end of Section~\ref{S:Haar} for details.

\subsection{Outline of the paper}
In Sections~\ref{S:MomStart} through \ref{S:FinalMom} we prove Theorem~\ref{T:Mom}
(and the analog of Theorem~\ref{T:Mom} for cokernels of symmetric random matrices).  In Section~\ref{S:MomDet}, we prove that the moments of Theorem~\ref{T:Mom} in fact determine the relevant distributions.  In Section~\ref{S:Haar}, we show what those distributions are, by comparing to the case of cokernels of uniform random symmetric matrices over $\Z/a\Z$, for which the distribution and the moments have already been computed in \cite{Clancy2014}.  In particular, we deduce Theorem~\ref{T:Main} from Corollary~\ref{C:Main}.

\section{Background}\label{S:Back}
\subsection{Cokernels of matrices}
For an $n\times n$ matrix $M$ with entries in a ring $R$, 
let $\cs(M)$ denote the column space of $M$ (i.e. the image of the map $M: R^n \ra R^n$).
We define the \emph{cokernel} of $M$,
$$
\cok(M):= R^n/\cs(M).
$$

\subsection{Sandpile group}\label{S:sandpile}
Let $[n]$ denote the set $\{1,\dots,n\}$.  Let $\Gamma$ be a graph on $n$ vertices labeled by $[n]$.  The Laplacian $L_\Gamma$ is an $n\times n$ matrix with $(i,j)$ entry 
$$
\begin{cases}
1 \textrm{ if $\{ i,j \} $ is an edge of $\Gamma$}\\
0 \textrm{ if $i\ne j$ and $\{i,j\}$ is not an edge of $\Gamma$}\\
-\deg(i) \textrm{ if $i=j$}.
\end{cases}
$$
We have that $L_\Gamma$ is a matrix with coefficients in $\Z$.
Let $Z\sub \Z^n$ be the vectors whose coordinates sum to $0$.  Clearly, 
$\cs(L_\Gamma) \sub Z$.  We define the sandpile group $S_\Gamma:= Z/\cs(L_\Gamma)$.
This is clearly a finitely generated abelian group, and it is finite if and only if $\Gamma$ is connected.

\subsection{Random graphs}
We write $\Gamma\in G(n,q)$ to denote that $\Gamma$ is an Erd\H{o}s--R\'{e}nyi random graph on $n$ labeled vertices with each edge independent and occurring with probability $q$.

\subsection{Finite abelian groups}
For a prime $p$, a finite abelian $p$-group is isomorphic to 
$\bigoplus_{i=1}^r \Z/p^{\lambda_1}\Z$ for some positive integers $\lambda_1\geq \lambda_2\geq\dots\geq \lambda_r$.
We call the partition $\lambda$ the \emph{type} of the abelian $p$-group.  For a partition $\lambda$, we use $G_\lambda$ to denote a $p$-group of type $\lambda$ when $p$ is understood.


For an $a\in \Z$ and a finite abelian group $G$, we can form the {\bf tensor} product $G\tesnor_\Z \Z/a\Z.$  This is a tensor product of the two objects as $\Z$-modules, but is particularly simple to say what it does to a particular group.  We have
$$
\left(\bigoplus_i \Z/a_i\Z \right)\tesnor_\Z \Z/a\Z=\bigoplus_i \Z/(a_i,a)\Z,   
$$
where $(a_i,a)$ is the greatest common divisor of $a_i$ and $a$.  So for primes $p\nmid a$, the Sylow $p$-subgroups are killed, 
and if $p^{e}$ is the highest power of a prime $p$ dividing $a$, then summands $\Z/p^i\Z$ of $G$ for $i\leq e$ are untouched and summands $\Z/p^i\Z$ for $i>e$ are changed to $\Z/p^e\Z.$  In terms of the partition diagram for the type $\lambda$ of the Sylow $p$-subgroup of $G$, it is truncated so that all rows are length at most $e$.

The {\bf exterior power} $\wedge^2 G$ is defined to be the quotient of $G\tesnor G$ by the subgroup generated by elements of the form $g\tensor g$.
If $G_p$ are the Sylow $p$-subgroups of $G$, then  $\wedge^2 G=\bigoplus_p \wedge^2 G_p.$
If $G_p$ is type $\lambda$, generated by $e_i$ with relations $p^{\lambda_i}e_i=0$, then
$\wedge^2 G_p $ is generated by the $e_i \wedge e_j$ for $i<j$ with relations
$p^{\lambda_j} e_i \wedge e_j=0$.
So
$$
\wedge^2 G_p \isom \bigoplus_{i} (\Z/p^{\lambda_i} \Z)^{\oplus (i-1)}.
$$

For a partition $\lambda$, let $\lambda'$ be the {\bf transpose partition}, so $\lambda'_j$ is the number of $\lambda_i$ that are at least $j$.  
Note that $\sum_i (i-1)\lambda_i$ is the sum over boxes in the partition diagram of $\lambda$ of $i-1$, where $i$ in the row the box appears in.  Summing by column, we obtain
$\sum_i (i-1)\lambda_i=
\sum_j \frac{\lambda'_j(\lambda'_j-1)}{2}.
$ 
Of particular importance to us will be the size
$$
|\wedge^2 G_p | =p^{\sum_i (i-1)\lambda_i}= p^{\sum_j \frac{\lambda'_j(\lambda'_j-1)}{2}}.
$$
 
 The exponent of a finite abelian group is the smallest positive integer $a$ such that $aG=0$.
When $R=\Z/a\Z$,  any finite abelian groups $H,G$ of exponent dividing $a$ are also {\bf $R$-modules}, and their group homomorphisms are the same as their $R$-module homomorphisms.
When the ring $R$ is understood, we write $G^*$ for $\Hom(G,R)$.  
If the exponent of $G$ divides $a$, then $G^*$ is non-canonically isomorphic to $G$.

We use $\langle g_1,\dots, g_m \rangle$ to denote the subgroup generated by $g_1,\dots, g_m$. 

\subsection{Pairings}
A map $\phi: G \times G \ra \C^*$ is symmetric if $\phi(g,h)=\phi(h,g)$ for all $g,h\in G$.
When $G$ is an abelian group, the map $\phi$ is bilinear if for all $g_1,g_2,h\in G$ we have
$\phi(g_1+g_2,h)=\phi(g_1,h)\phi(g_2,h)$, and similarly for the right factor. 
The map $\phi$ is perfect if the only $g\in G$ with $\phi(g,G)=1$ is $g=0$, and similarly for the other factor.

\subsection{Notation}
We denote the order of groups and sets using either absolute value signs $|\cdot|$ or $\#$.
(This inconsistency is because sometimes the absolute value signs are confusing when coupled with the notation $\mid$ for ``divides'' or parentheses, and the sharps take up too much space in some formulas.)
  We use $\isom$ to denote ``is isomorphic to.''  We use $\P$ to denote probability and $\E$ to denote expected value.
The letter $p$ will always denote a prime.

\section{Obtaining the moments I:
Determining the structural properties of the equations }\label{S:MomStart}

In the next four sections, we will prove Theorem~\ref{T:Mom}.  Let $G$ be a finite abelian group and $\Gamma\in G(n,q)$.  
We will write $S$ for $S_\Gamma$ and $L$ for the Laplacian $L_\Gamma$.
The group $S$ is defined as a quotient of $Z$, so any surjection $S\ra G$ lifts to a surjection $Z\ra G$, so we have
$$
\E(\#\Sur(S,G))=\sum_{F\in \Sur(Z,G)} \P(\cs(L)\sub \ker(F)).
$$
Our approach will be to estimate the probabilities on the right, but we will start with a slightly more general set up.

Let $a$ be a positive integer and let $G$ be a finite abelian group of exponent dividing $a$. Let $R$ be the ring $\Z/a\Z$. 
We will retain this notation through Section~\ref{S:FinalMom}.
Note that $\Sur(S,G)=\Sur(S\tesnor \Z/a\Z,G)$.  Said another way, whether $\cs(L)\sub \ker(F)$ only depends on the entries of $L$ modulo $a$. 

 In this and the next three sections, we shall do all our ``linear algebra'' over $R$.  Since $R$ is not a domain, this necessitates working more abstractly instead of just with matrices.
   A particular source of difficulty compared to the case of linear algebra over a field is that not all exact sequences of $R$-modules split, i.e. there are subgroups of our finite abelian groups that are not direct summands.  We will work carefully to find summands when we need them.


For an $R$-module $A$, let $A^*:=\Hom(A,R)$.  
We define the $R$-module $V=R^n$, the elements of which we write as column vectors.  We have a distinguished basis $v_1,\dots,v_n$ of $V$, and a dual basis $v_1^*,\dots, v_n^*$ of $V^*$.
Also let $W=R^n$, the elements of which we write as column vectors as well.  We have a  basis $w_1,\dots,w_n$ of $W$, and a dual basis $w_1^*,\dots, w_n^*$ of $W^*$.
An $n\times n$ matrix $M$ over $R$ is a homomorphism from $W$ to $V$, i.e. $M\in \Hom(W,V)$. 

Let $F\in \Hom(V,G)$.  Then $\cs(M)\sub \ker(F)$ if and only if the composition $FM\in \Hom(W,G)$ is $0$.
Let $\zeta$ be a primitive $a$th root of unity. 
So, if $X\in \Hom(W,V)$ is a random matrix, the Fourier transform gives
$$
\P(FX=0) =\frac{1}{|G|^n} \sum_{C\in \Hom(\Hom(W,G),R)) } \E (\zeta^{C(FX)}).
$$
These $C$ give the equations a matrix has to satisfy in order for a surjection given by $F$ to extend to the cokernel of the matrix.  (Hence, ``equations'' in the title of this section.)

Since $W\isom R^n$, we have that the natural map $\Hom(W,R)\tensor G \ra  \Hom(W,G)$ is an isomorphism.  So, the natural map $\Hom(\Hom(W,G),R))\ra \Hom(\Hom(W,R)\tensor G,R))$ is an isomorphism.  Composing with the isomorphism 
$\Hom(W^*\tensor G,R)) \isom \Hom(W^*,\Hom(G,R)))$, we have an isomorphism 
$
\Hom(\Hom(W,G),R)) \ra \Hom(W^*,G^*).$  Via this isomorphism, we will view $C\in \Hom(W^*,G^*)$.
So for $w^*\in W^*$, we have $C(w^*)\in G^*$.
We write $e: G^* \times G\ra R$ for the map that evaluates a homomorphism.  

Because of our interest in random matrices whose entries \emph{with respect to a specific choice of basis of $V$} are independent, we must necessarily sometimes compute things with respect to this basis.  
In other parts of the proof, we will work with a different choice of basis more closely aligned with $G$ (through $F$).
For some parts of our proof, in particular because we are working over the non-domain $R=\Z/a\Z,$ it will be much simpler to work in a basis-free way.  

In particular, our interest is in symmetric matrices $X$.  For this even to make sense, we now identify $W=V^*$ and so $v_i=w_i^*$ and $v_i^*=w_i$.
Our matrix $X$ will be symmetric and so we have
%
\begin{align*}
&C(FX)=\sum_{i=1}^n \sum_{j=1}^n 
e(C(v_j) ,F(v_i) ) X_{ij}\\&=\sum_{i=1}^n \sum_{j=i+1}^n 
(e(C(v_j) ,F(v_i) ) +e(C(v_i) ,F(v_j) )  )X_{ij} +\sum_{i=1}^{n} 
e(C(v_i) ,F(v_i) ) X_{ii}.
\end{align*}
We will study these coefficients in detail. For $i<j$ we define, 
$E(C,F,i,j):=e(C(v_j) ,F(v_i) ) +e(C(v_i) ,F(v_j) )$, and we also define
$E(C,F,i,i):=e(C(v_i) ,F(v_i) )$.  Roughly, our goal is to see
that as many as possible of these coefficients are non-zero,  as often as possible.  To do this 
we will have to identify structural properties of $F$ and of $C$ that influence the number of non-zero coefficients.  There are on the order of $n^2$ coefficients, and so ideally we would like 
on the order of $n^2$ of them to be non-zero.  Unfortunately, given $F$, this is not the case for every $C$.  Given a ``good'' $F$, for most $C$ we will be able to show that on the order of $n^2$ of the coefficients are non-zero, but for some only on the order of $n$ of the coefficients are non-zero, and for some $C$ all of the coefficients are $0$.  The rest of this section is devoted to explaining the structural properties of $C$ that will determine which of the three cases above it falls into.
(This is all for ``good'' $F$.  In this section we will determine the structural property that makes $F$ good, and in Section~\ref{S:depth} we will come to the rest of the $F$, which we will have to stratify by further structural properties.)

We will now write these coefficients $E(C,F,i,j)$ more equivariantly via a pairing.
We have a map $\phi_{F,C} \in \Hom(V, G \oplus G^*)$ given by adding $F$ and $C$.
We also have a map $\phi_{C,F} \in \Hom(V, G^* \oplus G)$ given by adding $C$ and $F$.
There is a map
\begin{align*}
(G \oplus G^*) \times (G^* \oplus G) &\stackrel{t}{\ra} R\\
((g_1,\phi_1),(\phi_2,g_2)) &\mapsto \phi_2(g_1)+\phi_1(g_2).
\end{align*}
Note that for all $u,v\in V$,
$$
t(\phi_{C,F}(u), \phi_{F,C}(v))= e(C(u) ,F(v) ) +e(C(v) ,F(u) ) .
$$

Note has $V$ has distinguished submodules $V_\sigma$ generated by the $v_i$ with $i\not \in \sigma$ for each $\sigma\sub [n]$.  So $V_\sigma$ comes from not using the coordinates in $\sigma$.
Clearly, for any submodule $U$ of $V$, 
$$\ker(\phi_{F,C}|_{U} )\sub \ker(F|_{U} ).$$
Now we will define the key structural property of $C$ (with respect to $F$) that determines if enough of the coefficients $E(C,F,i,j)$ are non-zero.

\begin{definition}
Let $0<\dc<1$ be a real number which we will specify later in the proof.
Given $F$, we say $C$ is \emph{robust} (for $F$)  if for every $\sigma\sub[n]$ with $|\sigma|<\dc n$,
$$
\ker(\phi_{F,C}|_{V_\sigma} )\ne \ker(F|_{V_\sigma} ).
$$
Otherwise, we say $C$ is \emph{weak} for $F$. 
\end{definition}

 We will estimate the number of weak $C$.  

\begin{lemma}[Estimate for number of weak $C$]\label{L:estweakC}
Given $G$, there is a constant $C_G$ such that for all $n$ the following holds.
Given $F\in\Hom(V,G)$, the number of $C\in \Hom(V,G^*)$ such that $C$ is weak for $F$ is at most
$$
C_G\binom{n}{\lceil \dc n \rceil -1} |G|^{\dc  n }
$$
\end{lemma}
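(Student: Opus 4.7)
The plan is to apply a union bound over the witness sets $\sigma$ appearing in the definition of weakness. First I would rewrite the weakness condition in a convenient form: since $\phi_{F,C}\colon V \to G \oplus G^*$ is just the pair map $v \mapsto (F(v), C(v))$, one has $\ker(\phi_{F,C}|_{V_\sigma}) = \ker(F|_{V_\sigma}) \cap \ker(C|_{V_\sigma})$. The inclusion $\ker(\phi_{F,C}|_{V_\sigma}) \subseteq \ker(F|_{V_\sigma})$ already noted in the excerpt is therefore an equality exactly when $\ker(F|_{V_\sigma}) \subseteq \ker(C|_{V_\sigma})$, i.e., when the restriction $C|_{V_\sigma}$ factors through the surjection $V_\sigma \twoheadrightarrow V_\sigma/\ker(F|_{V_\sigma}) \cong F(V_\sigma) \subseteq G$.

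For a fixed $\sigma$ I would then count such $C$ directly. Using the distinguished basis to split $V = V_\sigma \oplus V'$ with $V' = \langle v_i : i \in \sigma \rangle$ free of rank $|\sigma|$, every $C \in \Hom(V,G^*)$ is determined by the pair $(C|_{V_\sigma}, C|_{V'})$. The restriction $C|_{V'}$ is unconstrained, contributing $|G^*|^{|\sigma|} = |G|^{|\sigma|}$ choices. The factorization constraint on $C|_{V_\sigma}$ leaves at most $|\Hom(F(V_\sigma), G^*)|$ possibilities, and since $F(V_\sigma)$ ranges over the finitely many subgroups of $G$, this factor is bounded by $\max_{H \leq G} |\Hom(H, G^*)|$, a constant depending only on $G$ which I absorb into $C_G$.

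Finally I would sum over $\sigma \subseteq [n]$ with $|\sigma| \leq \lceil \dc n \rceil - 1$. Using $|G|^{|\sigma|} \leq |G|^{\dc n}$ and the standard estimate $\sum_{k=0}^{m} \binom{n}{k} \leq c_\dc \binom{n}{m}$ for $m = \lceil \dc n \rceil - 1$ (valid for $\dc$ bounded away from $1/2$ via the ratio-test argument, since consecutive binomial coefficients grow geometrically below the median), the total is at most $c_\dc \cdot C_G \cdot \binom{n}{\lceil \dc n \rceil - 1} |G|^{\dc n}$. After absorbing $c_\dc$ into $C_G$ (permissible since $\dc$ is a fixed constant, eventually chosen in terms of $G$), this is precisely the claimed bound.

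I do not foresee a substantial obstacle here: the argument is essentially bookkeeping once one recognizes the key structural observation, namely that forcing $C|_{V_\sigma}$ to factor through a subgroup of $G$ cuts its number of possibilities from $|G|^{n-|\sigma|}$ down to an $O_G(1)$ quantity, a saving of $|G|^{n-|\sigma|}$. This saving is exactly what makes the total count collapse from the trivial $|G|^n$ to $|G|^{\dc n}$ times the entropy $\binom{n}{\dc n}$ of picking the small witness set $\sigma$.
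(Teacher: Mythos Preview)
Your proposal is correct and follows essentially the same approach as the paper: both recognize that the kernel equality forces $C|_{V_\sigma}$ to factor through $F(V_\sigma)\leq G$, leaving $O_G(1)$ choices on $V_\sigma$ and $|G|^{|\sigma|}$ choices on the complement, and then take a union bound over $\sigma$. The only cosmetic difference is that the paper observes the weakness condition is monotone in $\sigma$ (if it holds for some $\sigma$ it holds for any superset), so one may take $|\sigma|=\lceil\dc n\rceil-1$ exactly and sum over a single binomial coefficient, whereas you sum over all $|\sigma|\leq\lceil\dc n\rceil-1$ and then invoke the geometric bound on the binomial tail; both routes land on the same estimate.
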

\begin{proof}
If $C$ is weak, then there exists some $\sigma\sub [n]$ with $|\sigma| =\lceil \dc n \rceil -1$ such that 
$$
\ker(\phi_{F,C}|_{V_\sigma} )= \ker(F|_{V_\sigma} ).
$$
We note in particular this implies that for $s\in V_\sigma$, we have that $Cs$ is determined by $Fs$.
(If $Fs=Fs'$ but $Cs\ne Cs'$, then $s-s'\in \ker F$ but $s-s'\not\in \ker\phi_{F,C}|_{V_\sigma}$.)
Let $H:=\im F|_{V_\sigma}$.  Further, there is a homomorphism $\psi: H \ra G^*$ so that
$Cs=\psi(Fs)$ for all $s\in V_\sigma$.  There are $\binom{n}{\lceil \dc n \rceil -1} $ choices for $\sigma,$
then $|G|^{\dc  n}$ choices for $Cv_i$ for $i\in\sigma$,
then $\#\Hom(H,G^*)$ choices for $\psi$, and then $C$ is determined.  Note that since $H$ is a subgroup of $G$ we can find $C_G$ such that $\#\Hom(H,G^*)\leq C_G 
$.
\end{proof}

Now we will find a sufficient condition for $C$ to be weak in terms of our pairing $t$.

\begin{lemma}\label{L:pairtoinject}
Let $F\in\Hom(V,G)$ and  $C\in\Hom(V,G^*)$. Let $U$ be a submodule of $V$ such that $FU=G$.  Then if $U'$ is a submodule of $V$ such that
$t$ is $0$ on $\phi_{C,F}(U) \times \phi_{F,C} (U')$, then the projection map $G\oplus G^* \ra G$, when restricted to 
$\phi_{F,C}(U')$, is an injection.
In particular $\ker(\phi_{F,C}|_{U'} )=\ker(F|_{U'} ).$
\end{lemma}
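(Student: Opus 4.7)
The plan is a direct computation once the definitions are unpacked. Note first that the projection $\pi: G\oplus G^* \to G$ restricted to $\phi_{F,C}(U')$ is injective if and only if for every $u' \in U'$ the equality $F(u') = 0$ forces $C(u') = 0$: indeed $\pi(\phi_{F,C}(u')) = F(u')$, and two elements $\phi_{F,C}(u'_1), \phi_{F,C}(u'_2)$ agree in $G$ exactly when $F(u'_1 - u'_2) = 0$, in which case equality of the full images in $G \oplus G^*$ amounts to $C(u'_1 - u'_2) = 0$. So the entire content of the lemma reduces to the single implication: if $u' \in U'$ satisfies $F(u') = 0$, then $C(u') = 0$.

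To prove this implication, I would invoke the hypothesis on $t$ together with the surjectivity of $F|_U$. Fix such a $u'$. For every $u \in U$ the hypothesis gives
$$t(\phi_{C,F}(u), \phi_{F,C}(u')) = e(C(u), F(u')) + e(C(u'), F(u)) = 0.$$
Since $F(u') = 0$, the first summand vanishes, leaving $e(C(u'), F(u)) = 0$ for every $u \in U$. But by hypothesis $FU = G$, so as $u$ varies over $U$ the element $F(u)$ ranges over all of $G$. Hence $C(u') \in G^* = \Hom(G,R)$ annihilates every element of $G$, forcing $C(u') = 0$.

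For the ``in particular'' clause, the inclusion $\ker(\phi_{F,C}|_{U'}) \subset \ker(F|_{U'})$ is immediate and was already noted in the text. For the reverse inclusion, any $u' \in U'$ with $F(u') = 0$ also satisfies $C(u') = 0$ by the argument just given, so $\phi_{F,C}(u') = (F(u'), C(u')) = 0$, giving $u' \in \ker(\phi_{F,C}|_{U'})$.

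There is essentially no obstacle here beyond bookkeeping: one must apply the formula for $t$ in the correct order and exploit $FU = G$ at exactly the right moment. Conceptually the lemma is a nondegeneracy statement asserting that the pairing condition prevents $\phi_{F,C}$ from acquiring any new kernel on $U'$ beyond that of $F$, because any candidate new kernel element would have $C$-value annihilating a generating set of $G$.
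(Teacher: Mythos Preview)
Your proof is correct and follows essentially the same approach as the paper's: both reduce to showing that $F(u')=0$ forces $C(u')=0$ for $u'\in U'$, by computing $t(\phi_{C,F}(u),\phi_{F,C}(u'))$ and using $FU=G$. The only difference is cosmetic---the paper phrases this step as a contradiction (assume $C(u')=\psi\ne 0$, pick $g$ with $\psi(g)\ne 0$, lift to $r\in U$ with $Fr=g$, and obtain $t\ne 0$) while you argue directly that $C(u')$ annihilates all of $G$.
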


\begin{proof}
Suppose for the sake of contradiction that there is a $k\in U'$ with $Fk=0$ and $Ck=\psi \ne 0\in G^*$.  Since $\psi\ne0$, there must be some $g\in G$ such that $\psi(g)\ne 0$.  Since $FU=G$, there must be some $r\in U$ such that $Fr=g$.  Suppose $Cr=\psi'$.  Then $t(\phi_{C,F} (r), \phi_{F,C} (k))=\psi'(0)+\psi(g)\ne 0$.  So, we conclude $\phi_{F,C} (U')$ injects into $G$.

If $\ker(\phi_{F,C}|_{U'} )\ne \ker(F|_{U'} ),$
then  there is some $(0,\phi)\in \phi_{F,C}(U')$ with $\phi \ne 0$, which is a contradiction.
\end{proof}

\begin{corollary}\label{C:robustones}
Let $F\in\Hom(V,G)$ and  $C\in\Hom(V,G^*)$.
Let $U$ be a submodule of $V$ such that $FU=G$.  Then if
$$
\#\{i\in [n] \ |\  t(\phi_{C,F}(U), \phi_{F,C}(v_i))\ne 0 \}<\dc n
$$
then $C$ is weak for $F$.
\end{corollary}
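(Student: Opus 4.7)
The plan is to unwind the definitions and apply Lemma~\ref{L:pairtoinject} directly to the complementary submodule $V_\sigma$, where $\sigma$ is the index set singled out by the hypothesis.

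\textbf{Step 1: Locate the small set of bad indices.} Define
$$\sigma := \{i\in [n] \ | \ t(\phi_{C,F}(U),\phi_{F,C}(v_i)) \ne 0\}.$$
By hypothesis $|\sigma| < \dc n$, so $\sigma$ is a candidate witness for the weakness of $C$.

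\textbf{Step 2: Extend the vanishing by linearity.} For every $i \notin \sigma$, the pairing $t(\phi_{C,F}(U),\phi_{F,C}(v_i))$ vanishes by the definition of $\sigma$. Since $V_\sigma$ is the $R$-submodule generated by $\{v_i : i \notin \sigma\}$, and since $\phi_{F,C}$ is $R$-linear and $t$ is $R$-bilinear in its right argument, we obtain
$$t(\phi_{C,F}(U),\phi_{F,C}(V_\sigma)) = 0.$$

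\textbf{Step 3: Invoke Lemma~\ref{L:pairtoinject}.} We have $FU = G$ by hypothesis, and by Step 2, $t$ vanishes on $\phi_{C,F}(U)\times \phi_{F,C}(V_\sigma)$. Applying Lemma~\ref{L:pairtoinject} with $U' = V_\sigma$ yields
$$\ker(\phi_{F,C}|_{V_\sigma}) = \ker(F|_{V_\sigma}).$$
Combined with $|\sigma| < \dc n$ from Step 1, this is exactly the definition of $C$ being weak for $F$.

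The argument is essentially a direct translation of hypotheses and has no genuine obstacle: the hypothesis picks out exactly the small set $\sigma$ of indices where the annihilation condition of Lemma~\ref{L:pairtoinject} fails, so removing those coordinates gives a submodule on which the lemma applies. The work was done in the statement and proof of Lemma~\ref{L:pairtoinject}; the corollary simply packages it in a form that will be convenient when we later count robust vs.\ weak $C$.
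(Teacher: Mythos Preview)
Your proof is correct and follows essentially the same approach as the paper: define $\sigma$ as the set of indices where the pairing is nonzero, observe that $t$ vanishes on $\phi_{C,F}(U)\times\phi_{F,C}(V_\sigma)$ by linearity, and then apply Lemma~\ref{L:pairtoinject} with $U'=V_\sigma$ to conclude weakness. The paper's proof is simply a more compressed version of your three steps.
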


\begin{proof}
Let $\sigma:=\{i\in [n] \ |\  t(\phi_{C,F}(U), \phi_{F,C}(v_i))\ne 0 \}$.  Then $t(\phi_{C,F}(U), \phi_{F,C}(V_\sigma))=0$, and so by Lemma~\ref{L:pairtoinject}
we have that $C$ is weak for $F$.
\end{proof}


Now we will identify the influential structural property of $F$ (which will make it ``good'' as discussed above), which is a (transpose and) generalization of the notion
of a linear code from vector spaces to $R$-modules.

\begin{definition}
We say that $F\in \Hom(V,G)$ is a \emph{code} of distance $w$, if for every $\sigma\sub [n]$ with $|\sigma|<w$, we have $FV_\sigma=G$.
In other words, $F$ is not only surjective, but would still be surjective if we throw out (any) fewer than $w$ of the standard basis vectors from $V$.  (If $a$ is prime so that $R$ is a field, then this is equivalent to whether the transpose map $F: G^* \ra V^*$ is injective and has image $\im(F)\sub V^*$ a linear code of distance $w$, in the usual sense.)
\end{definition}

We have the following lemma about codes which we will next combine with the property of robustness to get a good bound on the number of $E(C,F,i,j)$ that are non-zero.

\begin{lemma}\label{L:FindAB}
Let $H$ be a finite $R$-module with Sylow $p$-subgroup of type $\lambda$.  
Suppose  $F\in \Hom(V,H)$ is a \emph{code} of distance $\delta n$, and let $C\in \Hom(V,H^*)$.  
Let $r=\lambda_1'$.
Then we can find $A_1,\dots,A_r\in H$ and $B_1,\dots,B_r\in H^*$ such that for every $1\leq i \leq r$
$$
\#\{ j\in [n] \ |\ Fv_j=A_i \textrm{ and } Cv_j=B_i \}\geq  \delta n/|H|^2,
$$
and after the projection to the Sylow $p$-subgroup of $H$, the elements 
 $A_1,\dots,A_r$ generate the Sylow $p$-subgroup of $H$.

\end{lemma}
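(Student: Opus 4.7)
The plan is a short pigeonhole/code argument followed by a Nakayama-style extraction of a minimum generating set.

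First, I set up the counting by partitioning $[n]$ into fibers of the map $\iota:[n]\to H\times H^*$ defined by $j\mapsto (Fv_j,Cv_j)$. Call a pair $(A,B)\in H\times H^*$ \emph{small} if $|\iota^{-1}(A,B)|<\delta n/|H|^2$, and let $\sigma\subset[n]$ denote the union of all small fibers. There are at most $|H|^2$ pairs and each small fiber contributes strictly fewer than $\delta n/|H|^2$ indices, so
\[
|\sigma| \;<\; |H|^2 \cdot \frac{\delta n}{|H|^2} \;=\; \delta n.
\]

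Next, I invoke the code hypothesis: since $|\sigma|<\delta n$, we have $FV_\sigma=H$. Equivalently, the set $\{Fv_j : j\notin\sigma\}$ generates $H$ as an $R$-module. Projecting to the Sylow $p$-subgroup $H_p$ of $H$ and then modulo $pH_p$, these same elements span the $\F_p$-vector space $H_p/pH_p$, which has dimension exactly $\lambda'_1=r$.

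From this spanning set I extract $r$ indices $j_1,\dots,j_r\notin\sigma$ whose images form an $\F_p$-basis of $H_p/pH_p$. By Nakayama's lemma for the finite $\Z_p$-module $H_p$ (equivalently, a lift of a basis of $H_p/pH_p$ is a generating set of $H_p$), the corresponding elements $Fv_{j_1},\dots,Fv_{j_r}$ generate $H_p$ in $H$. Setting $A_i:=Fv_{j_i}$ and $B_i:=Cv_{j_i}$, the pair $(A_i,B_i)$ is not small because $j_i\notin\sigma$, so
\[
\#\{j\in[n] : Fv_j=A_i,\ Cv_j=B_i\} \;\geq\; \delta n/|H|^2,
\]
and the $A_i$ project to generators of $H_p$, as required.

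There is no substantive obstacle here; the only point requiring care is the strict inequality $|\sigma|<\delta n$, which is guaranteed by defining ``small'' with a strict inequality $<\delta n/|H|^2$ and using that there are at most $|H|^2$ small pairs. Everything else is standard: a pigeonhole count, the definition of code, and lifting $\F_p$-generators through Nakayama.
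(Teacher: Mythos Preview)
Your proof is correct and takes a genuinely different route from the paper's. The paper proceeds by induction on $k$: at step $k+1$ it looks at the indices $j$ for which $Fv_j$ has order exactly $p^{\lambda_{k+1}}$ in the projection of $H/\langle A_1,\dots,A_k\rangle$ to the Sylow $p$-subgroup, argues there are at least $\delta n$ such $j$ (else the code condition fails), and then pigeonholes among those $\delta n$ indices to select $(A_{k+1},B_{k+1})$; along the way it maintains the stronger invariant that $\langle A_1,\dots,A_k\rangle$ projects to a direct summand of $H_p$ of type $(\lambda_1,\dots,\lambda_k)$. Your argument replaces this step-by-step construction with a single global pigeonhole: discard all small fibers at once, invoke the code property once on the resulting $\sigma$, and then extract generators of $H_p$ by lifting an $\F_p$-basis of $H_p/pH_p$ via Nakayama. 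Your approach is shorter and more conceptual; the paper's inductive approach yields the additional information that the $A_i$ have prescribed orders $p^{\lambda_i}$ and generate a summand at each stage, but that extra structure is not required by the lemma as stated nor by its later uses in the paper.
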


\begin{proof}
We find the $A_i$ and $B_i$ by induction, so that (after the projection to the Sylow $p$-subgroup of $H$) the elements $A_1,\dots,A_k$ generate a $p$-subgroup of type $\lambda_1,\dots,\lambda_k$ that is a summand of the Sylow $p$-subgroup of $H$.
Suppose we are done for $i\leq k$.  First, we count for how many $i$ is $Fv_i$ order $p^{\lambda_{k+1}}$ in the projection of $H/\langle A_1,\dots, A_k\rangle$ to the Sylow $p$-subgroup of $H$.  Suppose, for the sake of contradiction, that there were fewer than $\delta n$.  Then we have a $\sigma\sub [n]$ with $|\sigma|<\delta n$ such that $FV_\sigma\ne H$, contradicting the fact that $F$ is a code.  So, we have at least $\delta n$ values of $i$ such that  $Fv_i$  is order $p^{\lambda_{k+1}}$ in the projection of $H/\langle A_1,\dots, A_k\rangle$ to the Sylow $p$-subgroup of $H$.  There are at most $|H|^2$ possible values for $(Fv_i,Cv_i)$, so we let $(A_{k+1},B_{k+1})$ be the most commonly occurring value for the at least $\delta n$ values of $i$ we have found above.  
Any element of order $p^{\lambda_{k+1}}$ in an abelian $p$-group of exponent $p^{\lambda_{k+1}}$
generates a summand.
Since after projection to the Sylow $p$-subgroup of $H$, we have that $\langle A_1,\dots, A_k\rangle$ is a summand of the Sylow $p$-subgroup of $H$, 
and $A_{k+1}$ generates a summand of the quotient $H/\langle A_1,\dots, A_k\rangle$ after projection to the Sylow $p$-subgroup of $H$,
 we see that $\langle A_1,\dots, A_k, A_{k+1}\rangle$ is as desired.
\end{proof}

Now we will see that robustness does in fact determine that many of our coefficients of interest are non-zero.

\begin{lemma}[Quadratically many non-zero coefficients for robust $C$]\label{L:Quad}
Let $P$ be the set of primes dividing the order of $G$. 
If $F\in\Hom(V,G)$ is a  code of distance $\delta n,$ and if   $C\in\Hom(V,G^*)$ is robust for $F$, then
there are at least $\dc \delta n^2/(2|G|^2|P|)$ pairs $(i,j)$ with $i\leq j$ such that
$$
E(C,F,i,j)\ne 0.
$$
\end{lemma}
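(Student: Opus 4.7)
The plan is to combine Lemma~\ref{L:FindAB}, applied once per prime $p\in P$, with the implication of robustness from Corollary~\ref{C:robustones}. Robustness will supply one factor of $n$ (a set $T$ of ``good'' column indices $i$), and Lemma~\ref{L:FindAB} will supply the other factor of $n$ (a large pool of row indices $j$ for each such $i$).

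First, for each $p\in P$ I apply Lemma~\ref{L:FindAB} with $H=G$, viewed through its Sylow $p$-structure, to obtain elements $A_1^{(p)},\dots,A_{r_p}^{(p)}\in G$, $B_1^{(p)},\dots,B_{r_p}^{(p)}\in G^*$, and sets
\[
S_k^{(p)}:=\{\,j\in[n] : Fv_j=A_k^{(p)},\ Cv_j=B_k^{(p)}\,\},\qquad |S_k^{(p)}|\geq \delta n/|G|^2,
\]
such that the $A_k^{(p)}$ generate $G_p$ after projection. Picking one representative $j_{p,k}\in S_k^{(p)}$ and setting $U_0:=\langle v_{j_{p,k}} : p\in P,\ 1\leq k\leq r_p\rangle$, I claim $FU_0=G$: since $G=\bigoplus_{p\in P}G_p$ and the Sylow $p$-component of any group element is realized by multiplication by a CRT idempotent, the pooled $A_k^{(p)}$ generate all of $G$. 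Applying Corollary~\ref{C:robustones} to $U_0$ contrapositively and using robustness of $C$ then yields
\[
T:=\{\,i\in[n]:t(\phi_{C,F}(U_0),\phi_{F,C}(v_i))\neq 0\,\},\qquad |T|\geq \dc n.
\]

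For each $i\in T$, expanding the corresponding witness element of $U_0$ as an $R$-linear combination of the $v_{j_{p,k}}$ and using linearity of $t$ in its first argument, there is some $(p(i),k(i))$ with $E(C,F,j_{p(i),k(i)},i)\neq 0$. The crucial observation is now that $E(C,F,j,i)=e(C(v_i),F(v_j))+e(C(v_j),F(v_i))$ depends on $j$ only through the pair $(Fv_j,Cv_j)$, hence is constant on each set $S_k^{(p)}$. Therefore every $j\in S_{k(i)}^{(p(i))}$ satisfies $E(C,F,j,i)\neq 0$, producing at least $\delta n/|G|^2$ valid $j$'s for each $i\in T$. Summing and using symmetry of $E$ to convert ordered pairs to pairs with $i\leq j$ (with at most a factor-of-two loss) gives the lower bound $\dc\delta n^2/(2|G|^2)$, which is a fortiori at least $\dc\delta n^2/(2|G|^2|P|)$.

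The main obstacle is precisely the bookkeeping in the first step: Lemma~\ref{L:FindAB} is intrinsically a per-prime statement, while Corollary~\ref{C:robustones} demands the global condition $FU_0=G$, and bridging this gap forces one to invoke the Sylow decomposition $G=\bigoplus_p G_p$ and combine the $A_k^{(p)}$ across primes. Once $U_0$ is constructed, the identification $E(C,F,j,i)=t(\phi_{C,F}(v_j),\phi_{F,C}(v_i))$ lets robustness and Lemma~\ref{L:FindAB} compose cleanly into a quadratic count, with the ``$(Fv_j,Cv_j)$'' invariance promoting a single witness index $j_{p,k}$ to the entire set $S_k^{(p)}$ for free.
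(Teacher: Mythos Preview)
Your proof is correct and follows essentially the same approach as the paper: apply Lemma~\ref{L:FindAB} once per prime to build a submodule $U_0$ with $FU_0=G$, invoke Corollary~\ref{C:robustones} via robustness to get $\gamma n$ indices $i$, and then promote each witness $j$ to its full fiber $S_k^{(p)}$ using that $E(C,F,j,i)$ depends on $j$ only through $(Fv_j,Cv_j)$.

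The one genuine difference is that the paper, after obtaining the set $T$, pigeonholes over $p\in P$ to pass to a single prime (losing a factor $|P|$), whereas you allow the prime $p(i)$ to vary with $i\in T$ and thereby avoid that loss, obtaining the stronger bound $\gamma\delta n^2/(2|G|^2)$. Your argument is valid here because the ordered pairs $(j,i)$ you produce are distinct across $i\in T$, so halving still gives the claimed count of pairs with $i\leq j$. Two minor remarks: your identification $E(C,F,j,i)=t(\phi_{C,F}(v_j),\phi_{F,C}(v_i))$ is off by a factor of $2$ on the diagonal $i=j$ (there $t=2E$), but this is harmless since you only use the implication $t\neq 0\Rightarrow E\neq 0$; and your $U_0$ is a slightly smaller module than the paper's $W$ (one representative per $(p,k)$ rather than all of $\tau(p)$), which makes no difference since only $FU_0=G$ is needed for Corollary~\ref{C:robustones}.
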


\begin{proof}

For each $p\in P$, let $G_p$ be the Sylow $p$-subgroup of $G$. 
  Now using $p\in P$ and $F\in \Hom(V,G)$ and $C\in \Hom(V,G^*)$, 
 we pick $A_i(p)$ and $B_i(p)$ as in Lemma~\ref{L:FindAB}, and let 
$$\tau_i(p):=\{ j\in [n] \ |\ {F}v_j=A_i \textrm{ and } {C}v_j=B_i \}.$$
Let $\tau(p):=\cup_{i} \tau_i(p)$.  Let $V_p$ be the submodule of $V$ generated by the $v_j$ for $j\in \tau(p)$.
In particular, note that ${F}V_p$, in the projection to $G_p$, is all of $G_p$. 

Now, let $W$ be the submodule of $V$ generated by the $V_p$ for all $p\in P$.
In particular, note that $FW=G$.
So if $C$ is robust for $F$,  by Corollary~\ref{C:robustones},
$$
\#\{i\in [n] \ |\  t(\phi_{C,F}(W), \phi_{F,C}(v_i))\ne 0 \}\geq\dc n.
$$
We have
$$
\sum_{p\in P} \#\{i\in [n] \ |\  t(\phi_{C,F}(V_p), \phi_{F,C}(v_i))\ne 0 \}
\geq \#\{i\in [n] \ |\  t(\phi_{C,F}(W), \phi_{F,C}(v_i))\ne 0 \}
$$
because if $v_i$ pairs non-trivially with $W$, it must pair non-trivial with one of the submodules generating $W$.  So for some $p\in P$, we have 
$$
\#\{i\in [n] \ |\  t(\phi_{C,F}(V_p), \phi_{F,C}(v_i))\ne 0 \}\geq\dc n/|P|.
$$

Then for that particular $p$, 
$$
\#\{i\in [n] \ |\  t(\phi_{C,F}(v_j), \phi_{F,C}(v_i))\ne 0 \textrm{ for some $j\in \tau(p)$} \}\geq\dc n/|P|.
$$
However, there are at least $\delta n/|G|^2$ values of $j'\in\tau(p)$ with $\phi_{C,F}(v_{j'})=\phi_{C,F}(v_j)$.
Since for $i<j$ we have $t(\phi_{C,F}(v_j), \phi_{F,C}(v_i))=E(C,F,i,j)$, and also
$t(\phi_{C,F}(v_i), \phi_{F,C}(v_i))=2E(C,F,i,i)$,
we conclude that there are at least $\dc \delta n^2/(2|G|^2|P|)$ pairs $(i,j)$ with $i\leq j$ such that
$
E(C,F,i,j)\ne 0.
$
\end{proof}


Next, we will study how many coefficients can be non-zero for weak $C$.  Of course for $C=0$, all the $E(C,F,i,j)$ are $0$.  However, given $F$, there are other $C$ for which this can happen, and next we will identify those $C$.

We now take a second equivariant point of view on $E(C,F,i,j)$.
There is a natural map
coming from the evaluation map $G \tensor G^* \ra R$, 
\begin{align*}
\Hom(V\tesnor G) \tensor \Hom(V,G^*) &\ra V^* \tensor V^*. 
\end{align*}
We can further compose with the quotient $V^* \tensor V^* \ra \Sym^2 V^*$ to obtain
\begin{align*}
\Hom(V\tesnor G) \tensor \Hom(V,G^*) &\ra V^* \tensor V^* \ra \Sym^2 V^*. 
\end{align*}
So given an $F\in\Hom(V,G)$, we have a map
\begin{align}\label{E:mexplicit}
m_F :\Hom(V,G^*) &\ra \Sym^2 V^*\notag\\
C &\mapsto  \sum_{i=1}^n \sum_{j=i+1}^n (e(C(v_j) ,F(v_i) ) +e(C(v_i) ,F(v_j) )  )v_i^*v_j^* +\sum_{i=1}^{n} 
e(C(v_i) ,F(v_i) ) (v_i^*)^2.
\end{align} 

First, we determine some elements $C\in \Hom(V,G^*)$ that are in the kernel of $m_F$, i.e. all the $E(C,F,i,j)$ are $0$.
For $F\in\Hom(V,G)$ and $u\in G^*$, we con compose $F$ with $u$ to obtain $u(F)\in\Hom(V,R)$.
We can then multiply by $v\in G^*$ to obtain $u(F)v\in \Hom(V,G^*)$.  So we have a map
\begin{align*}
s_F: \wedge^2 G^* &\ra \Hom(V,G^*)\\
u\wedge v &\mapsto u(F)v-v(F)u.
\end{align*}
We can check that $\im(s_F)\sub \ker (m_F)$ by choosing a generating set for $G^*$.
Let $G\isom \oplus_{i=1}^{m} \Z/a_i\Z$, with $a_m\mid a_{m-1} \mid \cdots \mid a_1$.
Let $G^*$ be given by generators $e_i^*$ and relations $\frac{a}{a_i}e_i^*=0$.
So using Equation~\eqref{E:mexplicit}, we will check that  $\im(s_F)\sub \ker (m_F)$.
Let $C=s_F(e_i^* \wedge e_j^*)$.
Then the $v_a^*v_b^*$ coefficient of $m_F(C)$ is
\begin{align*}
&e(C(v_b) ,F(v_a) ) +e(C(v_a) ,F(v_b) ) \\
&=
e(e_i^*(Fv_b) e_j^* - e_j^*(Fv_b) e_i^* ,F(v_a) ) +e(e_i^*(Fv_a) e_j^* - e_j^*(Fv_a) e_i^* ,F(v_b) )
\\
&=
e_i^*(Fv_b) e_j^*(Fv_a) - e_j^*(Fv_b) e_i^*(Fv_a)+e_i^*(Fv_a) e_j^*(Fv_b ) - e_j^*(Fv_a) e_i^* (Fv_b )
\\
&=0.
\end{align*}
Similarly, the coefficient of $(v_a^*)^2$ in $m_F(s_F(e_i^* \wedge e_j^*))$
is $0$.
So we conclude $\im(s_F)\sub \ker (m_F)$.
We call the $C$ in  $\im(s_F)$ \emph{special} for $F$.

Now we see how many special $C$ there are.
\begin{lemma}\label{L:sfinj}
If $FV=G$, then we have that $s_F$ is injective.  In particular, $\#\wedge^2 G| \#\ker(m_F)$.
\end{lemma}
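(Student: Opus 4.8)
The plan is to show directly that the map $s_F\colon \wedge^2 G^*\to \Hom(V,G^*)$ has trivial kernel, using only that $F$ is surjective. The key observation is that $G^*$ and $\wedge^2 G^*$ can be analyzed through a generating set: writing $G\isom\bigoplus_{i=1}^m \Z/a_i\Z$ with $a_m\mid\cdots\mid a_1$, we have $G^*$ generated by $e_1^*,\dots,e_m^*$ with relations $\frac{a}{a_i}e_i^*=0$, and $\wedge^2 G^*$ is then generated by the $e_i^*\wedge e_j^*$ for $i<j$, where $e_i^*\wedge e_j^*$ has annihilator $\frac{a}{a_j}$ (the larger of the two orders, since $a_j\mid a_i$). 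A general element of $\wedge^2 G^*$ is thus $\omega=\sum_{i<j} c_{ij}\, e_i^*\wedge e_j^*$ with $c_{ij}$ well-defined modulo $\frac{a}{a_j}$. I want to show: if $s_F(\omega)=0$ in $\Hom(V,G^*)$, then every $c_{ij}\equiv 0\pmod{\frac{a}{a_j}}$.

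First I would exploit surjectivity of $F$ to choose, for each $k\in\{1,\dots,m\}$, a standard basis vector $v_{n(k)}\in V$ with $F(v_{n(k)})=e_k$ (the $k$th standard generator of $G$); this is possible because $FV=G$ means the images $F(v_1),\dots,F(v_n)$ generate $G$, and by a short argument (or by working with a lift of a section) one can arrange that each generator $e_k$ is actually hit — more precisely, it suffices that for each $k$ there is a vector $x_k\in V$ with $F(x_k)=e_k$, and then evaluate $s_F(\omega)$ at $x_k$ rather than at a basis vector; linearity makes this harmless. Then I would compute $s_F(\omega)(x_\ell)\in G^*$ for each $\ell$. Since $s_F(u\wedge v) = u(F)\,v - v(F)\,u$ as a map $V\to G^*$, evaluating at $x_\ell$ gives $s_F(u\wedge v)(x_\ell) = u(e_\ell)\,v - v(e_\ell)\,u$. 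Plugging in $u=e_i^*$, $v=e_j^*$ and using $e_i^*(e_\ell)=\delta_{i\ell}$ (as an element of $\Z/\tfrac{a}{a_i}\Z$), the sum $s_F(\omega)(x_\ell)$ collapses to $\sum_{j>\ell} c_{\ell j}\, e_j^* - \sum_{i<\ell} c_{i\ell}\, e_i^*$, an explicit element of $G^*=\bigoplus_i (\Z/\tfrac{a}{a_i}\Z)\,e_i^*$.

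The final step is bookkeeping with these relations. If $s_F(\omega)=0$ then each such evaluation vanishes, so reading off the $e_j^*$-coordinate ($j>\ell$) of $s_F(\omega)(x_\ell)=0$ gives $c_{\ell j}\equiv 0\pmod{\frac{a}{a_j}}$, which is exactly the relation needed to conclude $c_{\ell j}=0$ in $\wedge^2 G^*$. Ranging over all $\ell<j$ kills every coefficient, so $\omega=0$ and $s_F$ is injective. The divisibility claim $\#\wedge^2 G \mid \#\ker(m_F)$ then follows immediately: $\im(s_F)\subseteq\ker(m_F)$ was already checked above, $s_F$ is injective, and $\#\im(s_F)=\#\wedge^2 G^*=\#\wedge^2 G$ (since $G^*\isom G$ non-canonically, hence $\wedge^2 G^*\isom \wedge^2 G$), so $\#\wedge^2 G$ divides $\#\ker(m_F)$ by Lagrange. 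I expect the only mild subtlety — the "main obstacle," though it is minor — to be handling the evaluation cleanly when $F$ does not literally send standard basis vectors to the generators $e_k$; this is dispatched by evaluating at arbitrary preimages $x_k\in F^{-1}(e_k)$ and noting that $s_F$ is defined so that $s_F(\omega)$ is an honest homomorphism $V\to G^*$, so evaluation at any vector is legitimate.
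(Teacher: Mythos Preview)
Your argument has a genuine gap at the evaluation step. You assert $e_i^*(e_\ell)=\delta_{i\ell}$, but this cannot hold: since $e_i^*\in G^*=\Hom(G,R)$ and $e_i$ has order $a_i$ in $G$, the value $e_i^*(e_i)\in R=\Z/a\Z$ must be killed by $a_i$; when $a_i<a$ the element $1\in R$ has order $a$, so $e_i^*(e_i)\ne 1$. The natural generator of the $i$th summand of $G^*$ is $e_i^*\colon e_i\mapsto a/a_i$ (and $e_j\mapsto 0$ for $j\ne i$), which has order $a_i$. With the corrected value $e_i^*(e_\ell)=(a/a_i)\,\delta_{i\ell}$, your expression for $s_F(\omega)(x_\ell)$ becomes
\[
\sum_{j>\ell} c_{\ell j}\,\tfrac{a}{a_\ell}\, e_j^* \;-\;\sum_{i<\ell} c_{i\ell}\,\tfrac{a}{a_\ell}\, e_i^*,
\]
and vanishing of the $e_j^*$-component only yields $a_j\mid c_{\ell j}\cdot(a/a_\ell)$, which in general does not force $a_j\mid c_{\ell j}$.

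In fact this cannot be repaired, because the first assertion of the lemma is false. Take $G=\Z/p^3\Z\oplus\Z/p^2\Z\oplus\Z/p\Z$ with $a=p^3$. Then for every $w\in V$ one has $e_2^*(Fw)\in pR$ and $e_3^*(Fw)\in p^2R$, while $e_3^*$ has order $p$ and $e_2^*$ has order $p^2$; hence $s_F(e_2^*\wedge e_3^*)(w)=e_2^*(Fw)\,e_3^*-e_3^*(Fw)\,e_2^*=0$ identically, although $e_2^*\wedge e_3^*$ has order $p$ in $\wedge^2 G^*$. The paper's own proof carries the same error: the element $p^{e-\lambda_i}w_i^*\otimes e_j^*-p^{e-\lambda_j}w_j^*\otimes e_i^*$ has order $p^{\max(0,\lambda_i+\lambda_j-e)}$, not $p^{\lambda_j}$ as claimed (for $\lambda=(3,2,1)$, $e=3$, $i=2$, $j=3$ this order is $1$). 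What \emph{does} hold, and is all that the rest of the paper needs, is that $\#\wedge^2 G$ divides (indeed equals) $\#\ker(m_F)$: when $FV=G$, any $C\in\ker(m_F)$ must satisfy $\ker F\subset\ker C$, so $C=\psi\circ F$ for some $\psi\colon G\to G^*$, and the condition $m_F(C)=0$ becomes exactly that $(g,h)\mapsto\psi(g)(h)$ is alternating; thus $\ker(m_F)\cong\Hom(\wedge^2 G,R)$, which has cardinality $\#\wedge^2 G$. The clean fix is to define ``special'' as $C\in\ker(m_F)$ rather than $C\in\im(s_F)$.
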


\begin{proof}
It suffices to show that $\#\wedge^2 G| \#\im(s_F)$. 
Since everything in sight can be written as a direct sum of Sylow $p$-subgroups, we can reduce to the case that $G$ is a $p$-group of type $\lambda$ (and accordingly assume $R=Z/p^e\Z$). Let $r=\lambda_1'$.

By Lemma~\ref{L:FindAB}, we can find $\tau\sub [n]$ with $|\tau|=r$ such that $Fv_i$ generate $G$ for $i\in\tau$.  Let $W$ be the submodule of $V$ generated by the $v_i$ for $v\in \tau$.
Let $e_i$ generate $G$ with relations $p^{e-\lambda_i} e_i=0$.
Let $w_j\in W$ be such that $Fw_j=e_j$.
Let $W'\sub W$ be the subgroup of $W$ generated by the $w_j$.
Note that we have the maps
$$
W'/pW' \ra W/pW \stackrel{F}{\ra} G/pG.
$$
Since $W'/pW'$, $W/pW$, and  $G/pG$ are vector spaces over $\F_p$, with rank at most $r$, exactly $r$, and $r$ respectively, and
the composite map above is surjective, we must have that $W'/pW' \ra W/pW$ is surjective
and thus by Nakayama's Lemma that $W'=W$.
Since the $r$ elements $w_1,\dots, w_r$, generate the free rank $r$ $R$-module $W$, they must be a basis, and we have a dual basis $w_i^*$ of $W^*$.

Let $G^*$ be generated by $e_1^*, \dots ,e_r^* $ with relations $p^{e-\lambda_i}e_i^*$, 
and such that  $e_i^* e_i =p^{e-\lambda_i}$, and for $i\ne j$ we have $e_i^* e_j =0 $.

Recall we have
$$
s_F: \wedge^2 G^* \ra \Hom(V,G^*).
$$
We can take the further quotient 
$$
s'_F: \wedge^2 G^* \ra \Hom(W,G^*).
$$
We see that
$$
s_F(e_i^* \wedge e_j^*)(w_a)
= e_i^*(e_a) e_j^* - e_j^*(e_a) e_i^*.
$$
Recall that since $W$ is a free $R$-module, the natural map $W^* \tesnor G^* \ra \Hom(W,G^*)$ is an isomorphism.  
So we can determine that
$$
s'_F(e_i^* \wedge e_j^*)=p^{e-\lambda_i} w_i^* \tensor e_j^* -
p^{e-\lambda_j} w_j^* \tensor e_i^* .
$$
For $i<j$, this element has order $p^{\lambda_j}$, and we can easily conclude that
$$
p^{\lambda_2+2\lambda_2+\dots +(r-1)\lambda_r} \mid \#\im(s'_F) \mid \#\im(s_F).
$$
\end{proof}

Now we will see that as long as $C$ is not special (in particular even if it is weak), we can get a moderately good bound on the number of non-zero $E(C,F,i,j)$. 

\begin{lemma}[Linearly many non-zero coefficients for non-special $C$]\label{L:nonspec}
Given $F\in\Hom(V,G)$ a code of distance $\delta n$, suppose $C\in\Hom(V,G^*) \setminus \im(s_F)$ (so $C$ is not special for $F$).
Then there are at least $\delta/(2n)$ pairs $(i,j)$ with $i,j\in [n]$ and $i<j$ such that 
$$E(C,F,i,j) \ne 0 .$$
\end{lemma}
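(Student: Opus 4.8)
The plan is to prove the contrapositive by a radical/counting argument: assume that fewer than $\delta n/2$ of the coefficients $E(C,F,i,j)$ with $i\le j$ are nonzero, and deduce that $C$ is special for $F$. (I count the diagonal coefficients $E(C,F,i,i)$ as well; including them is essential, since over $\Z/a\Z$ with $a$ even the off-diagonal relations alone only force the relevant form to be antisymmetric, not alternating.) Let $\sigma\sub[n]$ be the set of $i$ for which $E(C,F,i,j)\ne 0$ for some $j$ (including $j=i$). Each nonzero coefficient contributes at most two indices to $\sigma$, so $|\sigma|<\delta n$, and since $F$ is a code of distance $\delta n$ this gives $FV_\sigma=G$. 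For $i\notin\sigma$ every $E(C,F,i,j)$ vanishes, so the symmetric form $\beta(u,v):=e(C(u),F(v))+e(C(v),F(u))=t(\phi_{C,F}(u),\phi_{F,C}(v))$ has $v_i$ in its radical for all $i\notin\sigma$; that is, $V_\sigma$ lies in the radical of $\beta$.

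From here I would run four short steps. First, $C$ descends through $F$: for $k\in\ker F$ and $u\in V_\sigma$, one has $0=\beta(u,k)=e(C(u),F(k))+e(C(k),F(u))=e(C(k),F(u))$, and as $u$ varies over $V_\sigma$ the element $F(u)$ ranges over all of $G=FV_\sigma$, so perfectness of $e\colon G^*\times G\to R$ forces $C(k)=0$; hence $C=\eta\circ F$ for a unique $\eta\in\Hom(G,G^*)$. Second, $\eta$ is antisymmetric: given $g,h\in G$, pick $u\in V_\sigma$ with $F(u)=g$ (possible since $FV_\sigma=G$) and any $v$ with $F(v)=h$; then $0=\beta(u,v)=e(\eta(g),h)+e(\eta(h),g)$. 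Third, $\eta$ is in fact alternating: the map $q(g):=e(\eta(g),g)$ is additive (the cross terms cancel by antisymmetry) and satisfies $2q(g)=0$, hence is a homomorphism from $G$ to the $2$-torsion of $R$; were it nonzero its kernel $N$ would be an index-$2$ subgroup of $G$, and then, $F$ being a code of distance $\delta n$, at least $\delta n$ of the $F(v_i)$ would fail to lie in $N$, producing at least $\delta n$ nonzero diagonal coefficients $E(C,F,i,i)=q(F(v_i))$ — contradicting that fewer than $\delta n/2$ coefficients are nonzero. So $q\equiv 0$ and $\eta$ is alternating.

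Fourth, I would identify the alternating $\eta$ with the image of $\wedge^2 G^*$. The natural map $\wedge^2 G^*\to\Hom(G,G^*)$ through which $s_F$ factors (by precomposition with the surjection $F$) is injective by Lemma~\ref{L:sfinj}; its image consists of alternating forms (this is exactly the verification that $\im(s_F)\sub\ker(m_F)$ carried out before that lemma); and the explicit $p$-group computation in the proof of Lemma~\ref{L:sfinj}, also underlying \eqref{E:main}, shows that both $\wedge^2 G^*$ and the group of alternating forms on $G$ have order $|\wedge^2 G|$. Hence the image of $\wedge^2 G^*$ is precisely the alternating forms, so $\eta=\bar\omega$ for some $\omega\in\wedge^2 G^*$, whence $C=\eta\circ F=s_F(\omega)\in\im(s_F)$ — i.e.\ $C$ is special, contradicting the hypothesis. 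This contradiction shows that at least $\delta n/2$ coefficients $E(C,F,i,j)$ with $i\le j$ are nonzero.

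I expect Step three to be the main obstacle: it is where the dependence on the diagonal entries genuinely enters, and where the gap between antisymmetric and alternating forms over the non-domain $\Z/a\Z$ at the prime $2$ must be crossed — a non-alternating $\eta$ really is non-special, so the only way to rule it out is to note that it forces linearly many nonzero diagonal coefficients, which is why the count must range over $i\le j$ rather than $i<j$. Step four is the other delicate point, since one cannot fall back on linear algebra over a field and must instead reuse the module-theoretic computation already performed for Lemma~\ref{L:sfinj}.
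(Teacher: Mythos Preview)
Your argument is correct and takes a genuinely different route from the paper. You show directly that $C$ factors through $F$ as $C=\eta\circ F$, then that $\eta$ is antisymmetric, then alternating, and finally that every alternating $\eta$ lies in the image of $\wedge^2 G^*\to\Hom(G,G^*)$ by a cardinality match. The paper instead establishes $\im(s_F)=\ker(m'_F)$ by an explicit image-size computation: it picks $\tau\subset[n]\setminus\sigma$ with $\{Fv_i:i\in\tau\}$ generating $G$, passes to an adapted basis $w_1,\dots,w_r$ of the free submodule on those coordinates with $Fw_i=e_i$, and reads off generators of $\im(m'_F)$ monomial by monomial to get $|\im(m'_F)|\ge |G|^n/|\wedge^2 G|$, which combined with Lemma~\ref{L:sfinj} forces $\ker(m'_F)=\im(s_F)$. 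Your approach is more structural and sidesteps this basis chase; the paper's is more hands-on but never has to name the universal property of $\wedge^2$.

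You are also right that the count must include the diagonal. With $G=\Z/2\Z$ and $\eta:G\to G^*$ the nonzero map, $C=\eta\circ F$ has every off-diagonal $E(C,F,i,j)=0$ yet $C\notin\im(s_F)=0$, so the statement with strict $i<j$ is false when $2\mid a$. The paper's own image computation likewise needs the diagonal terms $p^{e-\lambda_i}(w_i^*)^2$ to reach the required size, so both arguments are really proving the $i\le j$ version, which is exactly what Lemma~\ref{L:FullFcode} uses.

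One small correction to your Step~4 justification: Lemma~\ref{L:sfinj} gives injectivity of $s_F$ (hence of $\wedge^2 G^*\to\Hom(G,G^*)$, since $s_F$ is that map followed by precomposition with the surjection $F$), and $|\wedge^2 G^*|=|\wedge^2 G|$ follows from $G^*\cong G$; but the count of alternating forms on $G$ comes from the universal property $\{\text{alternating bilinear }G\times G\to R\}\cong\Hom(\wedge^2 G,R)=(\wedge^2 G)^*$, not from that lemma. With that identification in hand your cardinality match goes through cleanly.
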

In other words, not only do we have $\im(s_F)=\ker (m_F),$ but in fact when $F$ is a code, we have that non-special $C$ are not even near $\ker (m_F).$

\begin{proof}
Suppose not, for contradiction.  Let $\sigma\sub[n]$ have $|\sigma|<\delta n$ and
for $(i,j)$ with $i \not \in \sigma $ or with $j \not \in \sigma$ we have
$$e(C(v_j) ,F(v_i) ) +e(C(v_i) ,F(v_j) ) =0 .$$

In Lemma~\ref{L:sfinj} we have a lower bound on the size of $\ker(m_F)$.
Next we will find a lower bound on the size of $\im(m_F)$.
Recall we have
\begin{align*}
m_F :\Hom(V,G^*) &\ra \Sym^2 V^*\\
C &\mapsto  \sum_{i=1}^n \sum_{j=i+1}^n (e(C(v_j) ,F(v_i) ) +e(C(v_i) ,F(v_j) )  )v_i^*v_j^* +\sum_{i=1}^{n} 
e(C(v_i) ,F(v_i) ) (v_i^*)^2.
\end{align*} 
We can take the further quotient using $\Sym^2 V^* \ra Z$ that sends
$v_i^*v_j^*$ to $0$ if $i,j\in \sigma$. 
Call this map
$$
m'_F:\Hom(V,G^*) \ra Z.
$$
So we have some $C$ which is not in $\im(s_F)$ but for which $m'_F(C)=0$.
We will show this is impossible by showing that $\#G^n/\#\wedge^2 G | \#\im(m'_F)$.
Once we have established  $\#G^n/\#\wedge^2 G | \#\im(m'_F)$, by combining with 
Lemma~\ref{L:sfinj}, we will see that $\im(s_f)=\ker(m'_F)$ and obtain a contradiction, proving the lemma.

As in the proof of Lemma~\ref{L:sfinj}, we can establish that $\#G^n/\#\wedge^2 G | \#\im(m'_F)$ by reducing the the case where $G$ is a $p$-group of type $\lambda$, which we will do for the rest of the proof of this lemma (and accordingly assume $R=\Z/p^e\Z$).

We can find $\tau \sub [n]\setminus \sigma$ such that  $|\tau|=r$ and
$Fv_i$ for $i\in \tau$ generate $G$ using the Lemma~\ref{L:FindAB}.
(Specifically, since $|\sigma|<\delta n$, and $F$ is a code of distance $\delta n$, we have
$FV_\sigma=G$ and so $F|_{V_\sigma}$ is a code of some positive distance.  We apply Lemma~\ref{L:FindAB} to $F|_{V_\sigma}$.)  Let $e_i$ be generators for $G$ with relations $p^{e-\lambda_i} e_i=0$. 
Let $G^*$ be generated by $e_1^*, \dots ,e_r^* $ with relations $p^{e-\lambda_i}e_i^*$, 
and such that  $e_i^* e_i =p^{e-\lambda_i}$ and for $i\ne j$, we have $e_i^* e_j =0 $.
As in the proof of Lemma~\ref{L:sfinj}, we can find an alternate basis $w_1,\dots,w_r$ for the
free $R$-module generated by the $v_i$ with $i\in\tau$,  with the property that that $Fw_i=e_i$.

We will in fact consider the further quotient by $\Sym^2 V^* \ra Z'$ that
sends $v_i^*v_j^*$  to $0$ for $i,j$ with neither $i$ nor $j$ in $\tau$.
Call this map
$$
m''_F:\Hom(V,G^*) \ra Z'.
$$

Note that $v_i^*$ for $i\not\in \tau$ and $w_i^*$ for $1\leq i\leq r$ form a basis of $V^*$.
We will call these $z_i^*$ for uniform notation.
In particular, denote $\tau=\{\tau_1,\dots,\tau_r\}$ and, for $\tau_i\in\tau$, let $z_{\tau_i}:=w_{i}.$


If we write
$F=\sum_{1\leq i\leq n, 1\leq j\leq r} f_{ij} z_i^* e_j$, we have
$$Fz_\ell=\sum_{ 1\leq k\leq r} f_{\ell k}  e_k.$$
Then
\begin{align*}
m_F(C)&=\sum_{i=1}^n \sum_{j=1, j\ne i}^n e(C(z_j) ,F(z_i) )z_i^*z_j^* +\sum_{i=1}^{n} 
e(C(z_i) ,F(z_i) ) (z_i^*)^2
\\
&=\sum_{i=1}^n \sum_{j=1, j\ne i}^n e(C(z_j) ,\sum_{ 1\leq k\leq r} f_{ik}  e_k )z_i^*z_j^* +\sum_{i=1}^{n} 
e(C(z_i) ,\sum_{ 1\leq k\leq r} f_{ik}  e_k ) (z_i^*)^2.
\end{align*}
So 
\begin{align*}
&m_F(z_\ell^*\tensor e_m^*)\\
&=\sum_{i=1}^n \sum_{j=1, j\ne i}^n e(z_\ell^*(z_j) \tensor e_m^* ,\sum_{ 1\leq k\leq r} f_{ik}  e_k ) z_i^*z_j^* +\sum_{i=1}^{n} 
e(z_\ell^*(z_i) \tensor e_m^* ,\sum_{ 1\leq k\leq r} f_{ik}  e_k ) (z_i^*)^2\\
&=\sum_{i=1}^n \sum_{j=1, j\ne i}^n f_{im}p^{e-\lambda_m} z_\ell^*(z_j)  (z_i^*z_j^*) +\sum_{i=1}^{n} 
f_{im}p^{e-\lambda_m} z_\ell^*(z_i)   (z_i^*)^2\\
&=\sum_{i=1}^n f_{im}p^{e-\lambda_m} z_i^*z_\ell^* .
\end{align*}

Since, $Fz_b=\sum_{ 1\leq k\leq r} f_{bk}  e_k,$
and $Fz_{\tau_i}=e_i,$ we have
$f_{\tau_{i} i}=1$ and $f_{\tau_{i}k}=0$ for $k\ne {i}$.
If $\ell \not \in \tau$,
\begin{align*}
&m''_F(z_\ell^*\tensor e_m^*)
=\sum_{i\in \tau} f_{im}p^{e-\lambda_m} z_i^*z_\ell^* =
\sum_{1\leq i \leq r} f_{\tau_i m}p^{e-\lambda_m} w_{i}^*z_\ell^*
=p^{e-\lambda_m} w_{m}^*z_\ell^*
.
\end{align*}
We see here that $\im( m''_F)$ has a subgroup of size $\#G^{n-r}.$
We can then form $m'''_F$, a further quotient to only terms $z_i^*z_j^*$ with $i,j,\in \tau$.
In particular, the subgroup of size $\#G^{n-r}$ we have identified above will go to $0$ under 
$m'''_F$.
If $\ell\in \tau$
\begin{align*}
&m'''_F(z_\ell^*\tensor e_m^*)
=\sum_{i\in \tau} f_{im}p^{e-\lambda_m} z_i^*z_\ell^* =
\sum_{1\leq i \leq r} f_{\tau_i\ell}p^{e-\lambda_\ell} w_{i}^*z_\ell^*
=p^{e-\lambda_m} w_{m}^*z_\ell^*
.
\end{align*}

So for $i\leq j$, we see that $p^{e-\lambda_i} w_{i}^* w_{j}^*\in \im( m'''_F)$.
It follows  that $\im( m'''_F)$ has a subgroup of size  $p^{\lambda_1r+\dots+\lambda_r}.$
We conclude that $\#G^n/\#\wedge^2 G =\#G^{n-r}p^{\lambda_1r+\dots+\lambda_r} \mid\im(m''_F)\mid\im(m'_F)$ .
This completes the proof of the lemma as explained above.

\end{proof}

\section{Obtaining the moments II: A good bound for surjections that are codes}\label{S:codes}

In this section, we put the results of the last section together to prove a good bound on the probability that a code descends to a map from the cokernel of a random matrix.


\begin{lemma}\label{L:FullFcode}
Let $0<\alpha<1$, and $\delta>0$, and $a$ a positive integer, and $G$ a finite abelian group of exponent dividing $a$.
Then there is a $c>0$
and a real number $K$ such that the following holds.

Let $X$ be a random symmetric $n \times n$ matrix, whose entries $X_{ij}$, for $i\leq j$, 
are independent.
Further, we require that for any prime $p|a$ and any $t\in \Z/p\Z$, the probability $\P(X_{ij} \equiv t \pmod{p})\leq 1-\alpha.$  
 Let $F\in \Hom(V,G)$ be a code of distance $\delta n$. Let $A\in\Hom(V^*,G)$.
   For all $n$ we have
\begin{align*}
 \left|\P(FX=0) - |\wedge^2 G||G|^{-n}\right| &\leq
 \frac{K\exp(-cn)}{|G|^{n}}.
\end{align*}
and
\begin{align*}
 \P(FX=A) \leq  K|G|^{-n}.
\end{align*}
\end{lemma}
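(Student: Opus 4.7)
The plan is to expand $\P(FX=0)$ via Fourier inversion, $\P(FX=0) = |G|^{-n} \sum_{C \in \Hom(V,G^*)} \E(\zeta^{C(FX)})$, where by independence of the entries above and on the diagonal, $\E(\zeta^{C(FX)}) = \prod_{i \leq j} \E(\zeta^{E(C,F,i,j) X_{ij}})$. A preliminary step is a uniform bound $|\E(\zeta^{c X_{ij}})| \leq 1-\alpha'$ for every nonzero $c \in R$, with $\alpha' > 0$ depending only on $\alpha$ and $a$: for such $c$ the quotient $a' := a/\gcd(c,a)$ is divisible by some prime $p \mid a$, and the hypothesis $\P(X_{ij} \equiv t \pmod p) \leq 1-\alpha$ forces the pushforward of $X_{ij}$ to $\Z/a'\Z$ to be non-concentrated, so its character sum at the primitive $a'$-th root of unity $\zeta^c$ lies strictly inside the unit disk. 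Taking the maximum over the finitely many residues $c \ne 0$ gives a uniform $\alpha'$.

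Next I would partition $\Hom(V,G^*)$ into three classes: special $C$ (i.e.\ $C \in \im(s_F)$), non-special robust $C$, and non-special weak $C$. By Lemma~\ref{L:sfinj}, $s_F$ is injective so $|\im(s_F)| = |\wedge^2 G|$, and on $\im(s_F)$ every coefficient $E(C,F,i,j)$ vanishes, so the special $C$ contribute exactly $|\wedge^2 G|/|G|^n$, the desired main term. For non-special robust $C$, Lemma~\ref{L:Quad} produces at least $c_1 n^2$ pairs $(i,j)$ with $E(C,F,i,j) \ne 0$, so each term has magnitude at most $(1-\alpha')^{c_1 n^2}$; since there are at most $|G|^n$ such $C$, this contribution is bounded by $|G|^n (1-\alpha')^{c_1 n^2}$, which decays super-exponentially in $n$ and is easily absorbed into the target error.

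The main obstacle is controlling the non-special weak $C$, where the number of $C$ (bounded by Lemma~\ref{L:estweakC}) must be balanced against the number of nonzero coefficients per $C$ (bounded from below by Lemma~\ref{L:nonspec}). Concretely, Lemma~\ref{L:estweakC} produces at most $C_G \binom{n}{\lceil \dc n \rceil - 1}|G|^{\dc n}$ weak $C$, while Lemma~\ref{L:nonspec} gives at least $c_2 n$ nonzero coefficients per non-special $C$, bounding each term by $(1-\alpha')^{c_2 n}$. Using $\binom{n}{\dc n} \leq 2^n$, the total contribution from this class is at most $C_G \, 2^n |G|^{\dc n} (1-\alpha')^{c_2 n}$, which is of order $\exp(-c n)$ once $\dc$ is chosen sufficiently small in terms of $\alpha'$ and $|G|$. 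Combining the three contributions yields the first inequality with the claimed error $K \exp(-cn)/|G|^n$.

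The second inequality uses the same Fourier inversion: $\P(FX=A) = |G|^{-n} \sum_C \bar\zeta^{C(A)}\E(\zeta^{C(FX)})$, so by the triangle inequality $\P(FX=A) \leq |G|^{-n} \sum_C |\E(\zeta^{C(FX)})|$. The preceding analysis shows $\sum_C |\E(\zeta^{C(FX)})| \leq |\wedge^2 G| + O(\exp(-cn)) \leq K$, which is the desired bound.
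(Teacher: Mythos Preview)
Your overall strategy matches the paper's proof exactly: Fourier inversion, three-way split into special / non-special weak / robust, and the same lemmas (\ref{L:sfinj}, \ref{L:estweakC}, \ref{L:nonspec}, \ref{L:Quad}) doing the same jobs. The character-sum bound $|\E(\zeta^{cX_{ij}})|\le 1-\alpha'$ for nonzero $c$ is also what the paper proves (there with the explicit constant $\alpha'=\alpha/a^2$ and the bound written as $\exp(-\alpha/a^2)$).

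There is, however, one genuine gap in your treatment of the non-special weak $C$. You bound the count using Lemma~\ref{L:estweakC} and then write ``Using $\binom{n}{\dc n}\le 2^n$, the total contribution from this class is at most $C_G\,2^n|G|^{\dc n}(1-\alpha')^{c_2 n}$, which is of order $\exp(-cn)$ once $\dc$ is chosen sufficiently small.'' But the $2^n$ factor does not depend on $\dc$, so shrinking $\dc$ cannot make this expression decay: you would need $(1-\alpha')^{c_2}<1/2$, and since $c_2$ (which is $\delta/2$ from Lemma~\ref{L:nonspec}) and $\alpha'$ are fixed by the hypotheses $\alpha,\delta,a$, there is no reason this holds. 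The paper avoids this by using the sharper entropy bound $\binom{n}{\lceil \dc n\rceil-1}\le \exp(H(\dc)n)$ with $H(\dc)\to 0$ as $\dc\to 0$; then the weak contribution is at most $C_G\exp\bigl((H(\dc)+\dc\log|G|)n\bigr)(1-\alpha')^{c_2 n}$, and now choosing $\dc$ small enough (depending on $\alpha,\delta,a,|G|$) does force exponential decay. Once you replace the crude $2^n$ by this entropy estimate, the rest of your argument, including the second inequality via $\P(FX=A)\le|G|^{-n}\sum_C|\E(\zeta^{C(FX)})|$, goes through as written.
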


From the point of view of descending a surjection to the cokernel of a matrix, we only need $A=0$ above, but in fact, for our work with non-codes we will need the case of general $A$ as above.

\begin{proof}
Recall,
$$
\P(FX=A) =\frac{1}{|G|^{n}}\sum_{C\in \Hom(V,G^*) } \E (\zeta^{C(FX-A)}),
$$
where $\zeta$ is a primitive $a$th root of unity.

We break the sum into $3$ pieces: (we will later choose $0<\dc<\delta$)
\begin{enumerate}
 \item when $C$ is special for $F$
\item when $C$ is not special for $F$ and is weak for $F$
\item when $C$ is robust for $F$.
\end{enumerate}

Given $F$, there are $|\wedge^2 G|$ special $C$ for which $\zeta^{C(FX)}=1$ for all $X$.
Thus, the sum from (1) contributes $|\wedge^2 G||G|^{-n}$ when $A=0$ and at most $|\wedge^2 G||G|^{-n}$ in absolute value for any $A$.

For (2), we will first use the fact that there are not too many weak $C$ and our bound from Lemma~\ref{L:nonspec}.
From Lemma~\ref{L:estweakC}, we have
that the number of $C\in \Hom(V,G^*)$ such that $C$ is weak for $F$ is at most
$$
C_G \binom{n}{\lceil \dc n \rceil -1} |G|^{\dc  n }.
$$

Next we factor the expected value
$$
\E (\zeta^{C(FX-A)}) =\E (\zeta^{C(-A)})\prod_{1\leq i < j\leq n} \E(\zeta ^{E(C,F,i,j)X_{ij}}  )\prod_{1\leq i \leq n} \E(\zeta ^{E(C,F,i,i)X_{ii}}  ).
$$
Let $u\in R=\Z/a\Z$ with $u\ne 0$.  Then we will show that $|\E(\zeta^{u X_{ij}})|\leq \exp(-\alpha /a^2)$. We rephrase the problem as follows. Let $b$ be an integer and $\xi\ne 1$ a $b$th root of unity.  For $t\in\Z/b\Z$ we have $0\leq p_{t}\leq 1-\alpha$ with $\sum_t p_t=1$.  Then we will see
$|\sum_t p_t\xi^t |\leq e^{-\alpha/b^2}.$ 
Let $U$ be a unit vector in the same direction as $E:=\sum_t p_t\xi^t $ in the complex plane.  
We consider the projections of the $\xi^t$ onto $U$ and their (signed) lengths $proj_U(\xi^t)$.
We have $E=U\sum_t p_t proj_U(\xi^t)$. 
 Let $c\in \Z/b\Z$ be so that among the $\xi^t$, the complex number $\xi^c$ is closest to $U$ in angle.  So for $t\ne c$, we have $proj_U(\xi^t)\leq \cos(\pi/b)$.  So 
 $$\sum_t p_t proj_U(\xi^t)\leq \cos(\pi/b) +p_c(1-\cos(\pi/b))\leq \alpha \cos(\pi/b) +1-\alpha.
$$
We have $-1+\cos(\pi/b) \leq -b^{-2}$ for $b\geq 1$.  
So $-1+\cos(\pi/b) \leq  -b^{-2} e^{-\alpha/b^2}.$ 
Integrating with respect to $\alpha$, we obtain 
$\alpha\cos(\pi/b) +1-\alpha\leq e^{-\alpha/b^2}$, and conclude $|\sum_t p_t\xi^t |\leq e^{-\alpha/b^2},$ as desired.

Given $F$ a code of distance $\delta n$ and a $C$ that is not special, by Lemma~\ref{L:nonspec} we have that at least $\delta n/2$ of the $E(F,C,i,j)$ are non-zero. 
 So if $C$ is not special for $F$, we conclude that
$$
|\E (\zeta^{C(FX-A)})| \leq \exp(-\alpha \delta n/(2a^2)).
$$
 
Now, given $F$ and a robust $C$ for $F$, by Lemma~\ref{L:Quad}, 
we have that at least $\dc \delta n^2/(2|G|^2|P|)$ of the $E(C,F,i,j)$ are non-zero (where $P$ is the set of primes dividing $a$). 
 So if $C$ is robust for $F$, we conclude that
$$ 
|\E (\zeta^{C(FX-A)})| \leq \exp(-\alpha \dc \delta n^2/(2|G|^2|P|a^2)).
$$
 
In conclusion
\begin{align*}
 &\left|\P(FX=A) - \frac{1}{|G|^{n}} \sum_{C \in \Hom(V,G^*), \textrm{ special}}|\E (\zeta^{C(FX-A)})\right| \\
&\leq
 \frac{1}{|G|^{n}} \sum_{C \in \Hom(V,G^*), \textrm{ not special}}| \E (\zeta^{C(FX-A)})|\\
 &\leq
 \frac{1}{|G|^{n}} 
\left( 
 C_G \binom{n}{\lceil \dc n \rceil -1} |G|^{\dc  n }
 \exp(-\alpha \delta n/(2a^2))+ |G|^{n} \exp(-\alpha \dc \delta n^2/(2|G|^2|P|a^2))
 \right).
\end{align*}
So for any $c>0$ such that $c< \alpha \delta /(2a^2)$, given
 given $\delta, \alpha, G,c$, we can choose $\dc$ sufficiently small so that we have
\begin{align*}
& \left|\P(FX=A) -  \frac{1}{|G|^{n}} \sum_{C \in \Hom(V,G^*), \textrm{ special}} \E (\zeta^{C(FX-A)})\right| \\&\leq
 \frac{1}{|G|^{n}} \left(C_G \exp(-cn)  + \exp(\log(|G|)n-\alpha \dc \delta n^2/(2|G|^2|P|a^2)) \right).
\end{align*}
For $n$ sufficiently large given $\alpha,G,\delta,c,\dc$, we have $$\log(|G|)n-\alpha \dc \delta n^2/(2|G|^2|P|a^2)\leq -cn.$$
So in the case $A=0$, for $n$ sufficiently large, we have 
\begin{align*}
 \left|\P(FX=0) - |\wedge^2 G ||G|^{-n}\right| &\leq
 \frac{(C_G+1)\exp(-cn)}{|G|^{n}}.
\end{align*}
For $n$ that aren't sufficiently large, we will just increase the constant $K$ in the lemma.

For any $A$, we have for $n$ sufficiently large given $\alpha,|G|,\delta,c,\dc$,
\begin{align*}
 \P(FX=A) &\leq \left|  \frac{1}{|G|^{n}} \sum_{C \in \Hom(V,G^*), \textrm{ special}} \E (\zeta^{C(FX-A)})\right| +
\frac{(C_G+1)\exp(-cn)}{|G|^{n}}
\\
&\leq |G|^{-n}(|\wedge^2 G | + C_G +1).
\end{align*}
For $n$ that aren't sufficiently large, we can increase the constant $K$ as necessary.
\end{proof}

\section{Obtaining the moments III: Determining the structural properties of the surjections}\label{S:depth}

In the last section, we dealt with $F\in\Hom(V,G)$ that were codes.  Unfortunately, it is not sufficient to divide $F$ into codes and non-codes.  We need a more delicate division of $F$ based on the subgroups of $G$.

For an integer $D$ with prime factorization $\prod_i p_i^{e_i}$, let $\ell(D)=\sum_i e_i$.

\begin{definition}
The \emph{depth} of an $F\in\Hom(V,G)$ is the maximal positive $D$ such that
there is a $\sigma\sub [n]$ with $|\sigma|< \ell(D)\delta n$ such that $D=[G:FV_\sigma]$, or is $1$ if there is no such $D$. 
\end{definition}

\begin{remark}\label{R:depthcode}
In particular, if the depth of $F$ is $1$, then for every $\sigma\sub [n]$ with $|\sigma|< \delta n$,
we have that $FV_\sigma=G$ (as otherwise $\ell([G:FV_\sigma])\geq 1$), and so we see that $F$ is a code of distance $\delta n$.
\end{remark}
 Also, if the depth of $F$ is $D$, then $D\mid\#G$.
Now we will bound the number of $F$ that we have of depth $D$.

\begin{lemma}[Count $F$ of given depth]\label{L:countdepth}
There is a constant $K$ depending on $G$ such that if $D>1$, then number of $F\in \Hom(V,G)$ of depth $D$ is at most
$$
K\binom{n}{\lceil \ell(D)\delta n \rceil -1} |G|^n|D|^{-n+\ell(D)\delta n}.
$$

\end{lemma}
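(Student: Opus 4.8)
The plan is a straightforward counting argument that turns the structural definition of depth into a constraint on the generators of $F$. Suppose $F\in\Hom(V,G)$ has depth $D>1$. By definition there is a set $\sigma_0\subseteq[n]$ with $|\sigma_0|<\ell(D)\delta n$ and $[G:FV_{\sigma_0}]=D$; write $H:=FV_{\sigma_0}$, a subgroup of $G$ of index exactly $D$. First I would enlarge $\sigma_0$ to a set $\sigma\supseteq\sigma_0$ with $|\sigma|=\lceil\ell(D)\delta n\rceil-1$ (possible since $|\sigma_0|$ is an integer strictly less than $\ell(D)\delta n\le\lceil\ell(D)\delta n\rceil$, hence $|\sigma_0|\le\lceil\ell(D)\delta n\rceil-1$, and we are in the regime $\lceil\ell(D)\delta n\rceil-1\le n$). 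Since $V_\sigma\subseteq V_{\sigma_0}$ we get $FV_\sigma\subseteq H$. Thus every $F$ of depth $D$ lies in the set of $F$ for which there exist a set $\sigma$ with $|\sigma|=\lceil\ell(D)\delta n\rceil-1$ and a subgroup $H\le G$ of index $D$ with $Fv_i\in H$ for all $i\notin\sigma$.

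Next I would bound the size of this set by summing over the choices of $\sigma$ and $H$. There are $\binom{n}{\lceil\ell(D)\delta n\rceil-1}$ choices of $\sigma$, and the number of subgroups of $G$ of index $D$ is at most the (finite) total number of subgroups of $G$, which is a constant depending only on $G$ and which I fold into $K$. For a fixed $\sigma$ and $H$, a homomorphism $F\colon V\to G$ with $Fv_i\in H$ for $i\notin\sigma$ is determined freely by choosing $Fv_i\in G$ arbitrarily for $i\in\sigma$ and $Fv_i\in H$ arbitrarily for $i\notin\sigma$, so there are exactly $|G|^{|\sigma|}|H|^{\,n-|\sigma|}$ of them. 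Using $|H|=|G|/D$ and $|\sigma|=\lceil\ell(D)\delta n\rceil-1<\ell(D)\delta n$, this equals $|G|^{n}D^{-n+|\sigma|}\le|G|^{n}D^{-n+\ell(D)\delta n}$, where the last inequality holds because $D>1$ and the exponent $-n+|\sigma|$ is increased when $|\sigma|$ is replaced by the larger quantity $\ell(D)\delta n$. Multiplying the three factors (count of $\sigma$, count of $H$ absorbed into $K$, count of $F$ for fixed $\sigma,H$) gives the claimed bound, with $|D|=D$.

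I do not expect a genuine obstacle; the only thing to tidy up is the degenerate case $\ell(D)\delta n\ge n$, where the enlargement of $\sigma_0$ above may fail. But $D\mid\#G$ forces $\ell(D)\le\ell(\#G)$, so for the fixed small $\delta$ used later in the proof one has $\ell(D)\delta n<n$ for all large $n$; for the finitely many remaining small $n$ one simply enlarges $K$ (the number of all $F$ is at most $|G|^{n}$, which is bounded in terms of $G$ and $\delta$ in that range). The real content of the lemma is precisely the observation in the first paragraph—that an image of index $D$ after deleting $|\sigma|$ coordinates pins all but $|\sigma|$ of the generators of $F$ into a fixed index-$D$ subgroup—which is exactly the source of the $|D|^{-n+\ell(D)\delta n}$ saving.
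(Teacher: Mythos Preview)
Your proposal is correct and follows essentially the same approach as the paper's proof: sum over $\sigma$ of size $\lceil\ell(D)\delta n\rceil-1$ and over index-$D$ subgroups $H\le G$, then count $F$ with $Fv_i\in H$ for $i\notin\sigma$ to get $|G|^{|\sigma|}(|G|/D)^{n-|\sigma|}$. Your explicit enlargement of $\sigma_0$ to $\sigma$ and your handling of the degenerate regime $\ell(D)\delta n\ge n$ are minor clarifications the paper leaves implicit, but the argument is otherwise identical.
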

\begin{proof}
We sum over $\sigma\sub [n]$ with $\sigma=\lceil \ell(D)\delta n \rceil -1 $ the number of $F$ such that $D=[G:FV_\sigma]$.  Then we sum over the subgroups of $G$ of index $D$ (this sum will go into the constant.)
Now given a particular subgroup $H$ of index $D$, we bound the number of $F$ such that $FV_\sigma=H.$ 
We have at most
$(|G|/D)^{(n-|\sigma|)}$ maps from $V_\sigma$ to $H$, and at most $|G|^{|\sigma|}$ choices for the $Fv_i$ with $i\in \sigma$.
So, for a particular $\sigma$ and $H$,  the number of $F$ such that $FV_\sigma=H$  is at most
$$
(|G|/D)^{(n-|\sigma|)}|G|^{|\sigma|}=|G|^n D^{-n+|\sigma|}.
$$
Note that $|\sigma|<\ell(D)\delta n,$ and the lemma follows.
\end{proof}

The following is a variant on the bound on the number of $F$ of depth $D$ that we will need when we are working with the Laplacian of a random graph.

\begin{lemma}\label{L:countdepthgraph}
Let $pr_2: G \oplus R \ra R$ be the projection onto the second factor.
There is a constant $K$ depending on $G$ such that if $D> 1$, then number of $F\in \Hom(V,G\oplus R)$
such that $pr_2(Fv_i)=1$ for all $i\in [n]$, and
 of depth $D$ is at most
$$
K\binom{n}{\lceil \ell(D)\delta n \rceil -1} |G|^n|D|^{-n+\ell(D)\delta n}.
$$

\end{lemma}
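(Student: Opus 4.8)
The plan is to mimic the proof of Lemma~\ref{L:countdepth} almost verbatim (with the depth here defined using $G\oplus R$ in place of $G$); the one new feature is that the affine constraint $pr_2(Fv_i)=1$ \emph{helps} rather than hinders the count, because it forces every relevant subgroup $H\le G\oplus R$ to satisfy $pr_2H=R$, and then the per-vector count of elements of $H$ with second coordinate $1$ is exactly $|G|/D$, the same factor that appears in Lemma~\ref{L:countdepth}.

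First I would reduce to a sum over $\sigma$ of size $\lceil\ell(D)\delta n\rceil-1$. If $F$ satisfies $pr_2(Fv_i)=1$ for all $i$ and has depth $D>1$, it is witnessed by some $\sigma_0\sub[n]$ with $|\sigma_0|<\ell(D)\delta n$ and $[G\oplus R:FV_{\sigma_0}]=D$. Since $D$ divides $|G\oplus R|=a\#G$, the integer $\ell(D)$ is bounded by a constant depending only on $G$ and $a$, so (taking $\delta$ small, as throughout) $|\sigma_0|<n$; hence $FV_{\sigma_0}$ contains $Fv_i$ for some $i\notin\sigma_0$, and as this element has second coordinate $1$ and $1$ generates $R$, we get $pr_2(FV_{\sigma_0})=R$, whence $[G\oplus R:FV_{\sigma_0}]=[G:FV_{\sigma_0}\cap G]$ (identifying $G=G\oplus 0$), and in particular $D\mid\#G$. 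As in Lemma~\ref{L:countdepth}, enlarging $\sigma_0$ to any $\sigma\supseteq\sigma_0$ with $|\sigma|=\lceil\ell(D)\delta n\rceil-1$ only shrinks $FV_\sigma$, so, writing $D'=[G\oplus R:FV_\sigma]$, we have $D\mid D'$ and hence $\ell(D)\le\ell(D')$; thus $|\sigma|<\ell(D)\delta n\le\ell(D')\delta n$, so $D'$ satisfies the defining condition of depth and therefore $D'\le D$, which with $D\mid D'$ gives $D'=D$. Hence every such $F$ of depth $D$ is counted for at least one $\sigma$ of this fixed size.

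Then I would run the count for a fixed $\sigma$ of that size. Write $H=FV_\sigma$, a subgroup of $G\oplus R$ of index $D$; since $H$ is generated by elements of second coordinate $1$, necessarily $pr_2H=R$, so $|H\cap G|=|H|/|R|=|G|/D$. For $i\notin\sigma$ the element $Fv_i$ must lie in $H$ with second coordinate $1$, i.e.\ in a single coset of $H\cap G$, giving at most $(|G|/D)^{n-|\sigma|}$ choices; for $i\in\sigma$ it lies in the coset $\{(g,1):g\in G\}$, giving at most $|G|^{|\sigma|}$ choices. So for fixed $\sigma$ and $H$ there are at most $(|G|/D)^{n-|\sigma|}|G|^{|\sigma|}=|G|^nD^{-n+|\sigma|}$ valid $F$ with $FV_\sigma=H$. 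Summing over the subgroups $H$ of $G\oplus R$ of index $D$ (boundedly many in terms of $G$ and $a$, absorbed into $K$) and over the $\binom{n}{\lceil\ell(D)\delta n\rceil-1}$ choices of $\sigma$, and using $D>1$ together with $|\sigma|<\ell(D)\delta n$ to get $D^{-n+|\sigma|}\le|D|^{-n+\ell(D)\delta n}$, yields the asserted bound.

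I do not expect any real obstacle; this is a routine adaptation of Lemma~\ref{L:countdepth}. The one point deserving a careful word is exactly the observation that every subgroup $H$ that can occur as $FV_\sigma$ has $pr_2H=R$ (being generated by elements whose second coordinate is the generator $1$ of $R$), which collapses the restricted per-vector count from a naive $|H|$ to $|H\cap G|=|G|/D$ and thereby makes the final estimate coincide in shape with that of Lemma~\ref{L:countdepth}.
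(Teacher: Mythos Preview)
Your proposal is correct and follows essentially the same approach as the paper: sum over $\sigma$ of size $\lceil \ell(D)\delta n\rceil -1$, then over index-$D$ subgroups $H$ of $G\oplus R$, and bound the number of $F$ with $FV_\sigma=H$ by counting the choices of $Fv_i$ for $i\in\sigma$ and $i\notin\sigma$ separately. You are in fact more careful than the paper in two places: you justify explicitly (via maximality of depth) that enlarging the witness $\sigma_0$ to $\sigma$ preserves the index $D$, and you observe $pr_2H=R$ to get the fiber size exactly $|G|/D$, whereas the paper simply asserts the fiber over $1$ has at most $|H|/a$ elements (true since $1$ generates $R$, so the fiber is either empty or has exactly $|H|/a$ elements).
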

\begin{proof}
Let $G'=G\oplus R$.  Note that $|G'|=a|G|.$
We sum over $\sigma\sub [n]$ with $\sigma=\lceil \ell(D)\delta n \rceil -1 $ the number of $F$ such that
$FV_\sigma$ has index $D$ in $G'$.  Then we sum over the subgroups of $G'$ of index $D$ (this sum will go into the constant.)
Now given a particular subgroup $H$ of index $D$, we bound the number of $F$ such that $FV_\sigma=H.$ 

For $i \in [n]\setminus\sigma$, we must have $Fv_i\in H$ and $pr_2(F v_i) =1$.    There are at most $H/a$ elements $h\in H$ such that $pr_2( h)=1.$  
So there are at most
$$
(|H|/a)^{n-|\sigma|}=(|G'|/(aD))^{n-|\sigma|}=(G/D)^{n-|\sigma|}
$$
possibilities for $F|_{V_\sigma}.$
There are at most $|G|^{|\sigma|}$ choices for the $Fv_i$ with $i\in \sigma$.
So, for a particular $\sigma$ and $H$,  the number of $F$ such that $FV_\sigma=H$ (and $pr_2(Fv_i)=1$ for all $i\in [n]$)  is at most
$$
(G/D)^{n-|\sigma|} |G|^{|\sigma|}= |G|^n D^{-n+|\sigma|}.
$$
Note that $|\sigma|<\ell(D)\delta n,$ and the lemma follows.
\end{proof}

For each depth $D$ of $F$, we will use the following specially tailored bound for $\P(FX=0).$

\begin{lemma}[Bound probability given depth]\label{L:probdepthestimate}
Let $\alpha,\delta,G,a$ be as in Lemma~\ref{L:FullFcode}. Then there is a real $K$ such that if $F\in\Hom(V,G)$ has depth $D> 1$ and  $[G:FV]<D$ (e.g. the latter is true of $FV=G$), then for all $X$
as in Lemma~\ref{L:FullFcode}
and all $n$,
$$\P(FX=0)\leq 
K e^{-\alpha(1-\ell(D)\delta)n} (|G|/D)^{-(1-\ell(D)\delta)n}
.$$
\end{lemma}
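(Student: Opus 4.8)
The plan is to reduce $\P(FX=0)$ to an inhomogeneous instance of Lemma~\ref{L:FullFcode} for the proper subgroup $H:=FV_\sigma$, where $\sigma$ comes from the definition of depth, and to extract an extra exponential factor from the part of $F$ seen in $G/H$. Since $F$ has depth $D>1$ and $[G:FV]<D$, the set witnessing the depth cannot be empty, so there is $\sigma\subseteq[n]$ with $1\le|\sigma|<\ell(D)\delta n$ and $[G:FV_\sigma]=D$; put $H:=FV_\sigma$ and $n':=n-|\sigma|$. I would first note that $F|_{V_\sigma}\colon V_\sigma\to H$ is a code of distance $\delta n$: otherwise some $\tau\subseteq[n]\setminus\sigma$ with $|\tau|<\delta n$ has $FV_{\sigma\cup\tau}\subsetneq H$, so $D':=[H:FV_{\sigma\cup\tau}]\ge 2$, giving $\ell(D')\ge 1$, $[G:FV_{\sigma\cup\tau}]=DD'>D$, and $|\sigma\cup\tau|<(\ell(D)+1)\delta n\le\ell(DD')\delta n$, contradicting the maximality in the definition of depth. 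Since $H\subseteq FV$, the index $s:=[FV:H]=D/[G:FV]$ is an integer which is $\ge 2$; writing $\pi\colon G\to G/H$, the composite $\pi F$ is supported on the coordinates in $\sigma$ (as $F(v_i)\in H$ for $i\notin\sigma$) and has image of order $s>1$, so I fix once and for all an index $i_0\in\sigma$ with $\pi F(v_{i_0})\ne 0$. (If $\ell(D)\delta\ge 1$ the asserted bound is $\ge 1$ and there is nothing to prove, so assume $n'>(1-\ell(D)\delta)n>0$.)

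Next I would split $X$ along $\sigma$: after reordering the coordinates, $X$ decomposes into a symmetric $n'\times n'$ block $X'$ supported on $[n]\setminus\sigma$, a $|\sigma|\times n'$ cross block $Y$, and a symmetric block on $\sigma$, with mutually independent entries, each still satisfying $\P(\,\cdot\equiv t\bmod p)\le 1-\alpha$. Writing $F=(F'',F')$ with $F'=F|_{V_\sigma}$ (of image exactly $H$), the columns of $FX$ indexed by $[n]\setminus\sigma$ vanish precisely when $F'X'=-F''Y$, so $\P(FX=0)\le\P(F'X'=-F''Y)$. Conditioning on $Y$ makes $-F''Y$ a fixed map whose $k$th column is $-\sum_{i\in\sigma}Y_{ik}F(v_i)$; since every column of $F'X'$ lies in $H$, this conditional probability is $0$ unless every column of $-F''Y$ lies in $H$, i.e.\ unless $\sum_{i\in\sigma}Y_{ik}\,\pi F(v_i)=0$ in $G/H$ for all $k\notin\sigma$, and otherwise Lemma~\ref{L:FullFcode}, applied to the code $F'$ of distance $\ge\delta n'$ and the symmetric matrix $X'$, bounds it by $K_G|H|^{-n'}$ (with $K_G$ uniform over the finitely many subgroups $H\le G$). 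Averaging over $Y$,
\[
\P(FX=0)\ \le\ K_G\,|H|^{-n'}\cdot\P\!\Big(\textstyle\sum_{i\in\sigma}Y_{ik}\,\pi F(v_i)=0\text{ in }G/H\ \text{for all }k\notin\sigma\Big).
\]

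Finally I would estimate the last probability. For distinct $k\notin\sigma$ the vectors $(Y_{ik})_{i\in\sigma}$ involve pairwise disjoint independent entries of $X$, so the probability factors over $k$; for each $k$, conditioning on $\{Y_{ik}:i\in\sigma,\ i\ne i_0\}$ reduces the event to $Y_{i_0k}\,\pi F(v_{i_0})=c$ for a fixed $c$. The kernel $\{t\in\Z/a\Z:t\,\pi F(v_{i_0})=0\}$ is a proper subgroup $d\,\Z/a\Z$ with $d\ge 2$ and $d\mid a$ (as $\pi F(v_{i_0})\ne 0$), so the solution set for $Y_{i_0k}$ is empty or a residue class mod $d$, which for any prime $p\mid d$ has probability at most $\P(Y_{i_0k}\equiv r\bmod p)\le 1-\alpha$. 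Hence each factor is $\le 1-\alpha\le e^{-\alpha}$, the product is $\le e^{-\alpha n'}$, and
\[
\P(FX=0)\ \le\ K_G\,(|G|/D)^{-n'}e^{-\alpha n'}\ \le\ K_G\,(|G|/D)^{-(1-\ell(D)\delta)n}\,e^{-\alpha(1-\ell(D)\delta)n},
\]
using $n'=n-|\sigma|>(1-\ell(D)\delta)n$, $|G|/D\ge 1$ and $e^{-\alpha}<1$, and enlarging $K$ for the finitely many small $n$. The main thing to get right is the choice of decomposition: conditioning on the cross block $Y$ must simultaneously turn the main-block equations into an inhomogeneous code problem over $H$ (base $|H|=|G|/D$, handled by Lemma~\ref{L:FullFcode}) and impose the solvability constraints $\pi F(Y_{\cdot k})=0$, which form a system in pairwise disjoint variables and, by a one-variable Littlewood--Offord bound, supply exactly the extra factor $e^{-\alpha n'}$; the hypothesis $[G:FV]<D$ enters only to guarantee $s\ge 2$, i.e.\ that these constraints are nonvacuous.
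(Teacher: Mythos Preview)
Your proof is correct and follows essentially the same approach as the paper's: pick $\sigma$ witnessing the depth, set $H=FV_\sigma$, show the restricted map is a code of distance $\delta n$ by the maximality of $D$, then split the probability into a code piece handled by Lemma~\ref{L:FullFcode} (giving $|H|^{-n'}$) and a $G/H$ piece handled by a one-variable Littlewood--Offord bound (giving $e^{-\alpha n'}$). The only difference is cosmetic: the paper refines $\sigma$ to the possibly smaller set $\tau=\{i:Fv_i\notin H\}\subseteq\sigma$ and works with $\eta=[n]\setminus\tau$ in place of your $[n]\setminus\sigma$, yielding $|\eta|\ge n'$ in the exponents, but since both satisfy $|\eta|,n'>(1-\ell(D)\delta)n$ the final bound is identical.
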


\begin{proof}
Pick a $\sigma\sub [n]$ with $|\sigma|< \ell(D)\delta n$ such that $D=[G:FV_\sigma].$
Let $FV_\sigma=H.$  

We will now divide the elements $i\in [n]$ depending on whether $Fv_i\in H$.  Let $\eta$ be the set of $i$ such that
$Fv_i\in H$.  Let $\tau=[n]\setminus \eta$.  Note that $[n]\setminus \sigma \sub \eta$, so
$\tau\sub \sigma,$ and so $|\tau|<\ell(D) \delta n$.
However, since $[G:FV]<D$, we cannot have $\tau$ empty. 


 Let $X_{\eta}$ be obtained from $X$ by replacing the entries in the ${ \eta}$ rows with $0$, and let $X_\tau$ be obtained from $X$ by replacing the entries in the $\tau$ rows with $0$.  
 Note that $FX\in \Hom(V^*, G).$  We identify $\Hom(V^*, G)$ with $G^n$ using the preferred basis of $V^*$.
We have
$$
\P(FX=0)=\P(FX_\tau \in H^n) \P(FX=0 | FX_\tau \in H^n).
$$

Let $X_{\tau\eta}$ be obtained from $X_\tau$ be replacing the entries in the $\tau$ columns with $0$.
Note that all the entries in $X_{\tau\eta}$ that are not forced to be zero are independent.  
We have 
$$\P(FX_\tau \in H^n) \leq \P(FX_{\tau\eta} \in H^n).$$

Note that all the entries in $X_{\tau\eta}$ that we have not made zero are independent.  
So
$$ \P(FX_{\tau\eta} \in H^n)=\prod_{i\in \eta}\P(Fcol_i(X_{\tau}) \in H).
$$
Consider a single column and let $x_1,\dots,x_{|\tau|}$ be the entries in the $\tau$ rows of $X$, and $f_1,\dots,f_{|\tau|}\in G\setminus H$ be the corresponding entries of $F$.  We condition on $x_2,\dots,x_{|\tau|}$.  Then, for some fixed $g\in G$, and $f_1\in G\setminus H$, we are trying to bound 
$$
\P(f_1x_1 \equiv g \textrm{ in $G/H$} ).
$$
Since $f_1\not\equiv 0 \pmod{G/H}$, there is some prime $p$ that divides the order of $f_1$ in $G/H$.
Note that if $f_1x \equiv g \textrm{ in $G/H$}$, then for $\Delta\in \Z$ such that $p\nmid \Delta$, we have
$f_1(x+\Delta) \not \equiv g \textrm{ in $G/H$}$.  So the $x$ such that $f_1x \equiv g \textrm{ in $G/H$}$ are contained in a single equivalence class modulo $p$.  
Thus,
$$
\P(f_1x_1 \equiv a \textrm{ in $G/H$} )\leq e^{-\alpha},
$$
since the probability that $x_1$ is any any particular equivalence class mod $p$ is at most $e^{-\alpha}$.
We can then conclude
$$\P(FX_\tau \in H^n) 
 \leq e^{-\alpha|\eta|}.$$

Now let $X_{\eta \eta}$ be obtained from $X_{\eta}$ by replacing the entries in $\tau$ columns with $0$.
Let $X_{*\eta}=X_{\eta \eta}+X_{\tau\eta}$ (so $X_{*\eta}$ is obtained from $X$ by replacing the $\tau$ columns with $0$).
We have 
$$
\P(FX=0 | FX_\tau \in H^n)\leq \P(FX_{*\eta}=0 | FX_\tau \in H^n).
$$
We estimate $\P(FX_{*\eta}=0 | FX_\tau \in H^n)$ by conditioning on the $\tau$ rows (and columns) of $X$.
  Then for any $n\times n$ matrix $Y_\tau$ over $R$ supported on the $\tau$ rows  with  $FY_\tau \in H^n$ (and with 
  $Y_{\tau\eta}$ obtained from $Y_\tau$ by replacing the entries in the $\tau$ columns by $0$),   

  $$\P(FX_{*\eta}=0 | X_\tau=Y_\tau)=
  \P(FX_{\eta\eta}+FY_{\tau\eta}=0 | X_\tau=Y_\tau).
  $$
In particular, $FY_{\tau\eta}$ is some fixed value in $H^{|\eta|}.$  
Also, note that $X_{\eta\eta}$ is independent of $X_\tau$.
For a fixed $A \in H^{|\eta|}$, we need to estimate $
\P(FX_{\eta\eta}=A).
$
Note that $F|_{V_\tau}$ (i.e. restricted to the $\eta$ indices) is a code of distance $\delta n$ in $\Hom(V_\tau, H)$.
(If it were not, then by eliminating $\tau$ and $<\delta n$ indices, we would eliminate $<(\ell(D)+1)\delta n$ indices
and have an image which was index that $D$ strictly divides, contradicting the depth of $F$.)
So by Lemma~\ref{L:FullFcode}, for some $K$
$$
  \P(FX_{\eta\eta}+FY_{\tau\eta}=0 | X_\tau=Y_\tau)=
 \P(FX_{\eta\eta}=-FY_{\tau\eta} \in H^{|\eta|} )  
  \leq K|H|^{-|\eta|}.
$$
So we conclude,
$$
\P(FX=0)\leq K e^{-\alpha|\eta|} |H|^{-|\eta|} \leq K_1 e^{-\alpha(1-\ell(D)\delta)n} (|G|/D)^{-(1-\ell(D)\delta)n}.
$$
\end{proof}

\section{Obtaining the moments IV: Putting it all together}\label{S:FinalMom}

We can combine our work above to give a universality result on (and the actual values of) the moments of cokernels of random matrices and of sandpile groups of random graphs, which we do in the following two theorems, respectively.  
\begin{theorem}\label{T:MomMat}
Let $0<\alpha<1$ be a real number and $G$ a finite abelian group.
For any $c<\min(\alpha, \log(2))$, there is a $K>0$ (depending on $\alpha, G, c$) such that the following holds.
Let $X$ be a random symmetric $n \times n$ matrix, whose entries $X_{ij}\in\Z$, for $i\leq j$, 
are independent.
Further, we require that for any prime $p|G$ and any $t\in \Z/p\Z$, the probability $\P(X_{ij} \equiv t \pmod{p})\leq 1-\alpha.$  Then,
\begin{align*}
\left|\E(\#\Sur(\cok(X),G))  -|\wedge^2 G| \right| \leq Ke^{-cn}.
\end{align*}
\end{theorem}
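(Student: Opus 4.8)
The plan is to expand the left side as a sum over surjections and estimate each probability using the stratification of $F$ by depth developed in Sections~\ref{S:depth}. Fix $c<\min(\alpha,\log 2)$ and set $a=\exp(G)$, so that $\Z/a\Z$ has the same prime divisors as $\#G$ and the hypothesis on $X$ matches that of Lemma~\ref{L:FullFcode}. Since a surjection $\cok(X)\ra G$ is the same thing as an $F\in\Sur(\Z^n,G)$ with $FX=0$, and since whether $FX=0$ depends only on $X\bmod a$, we have
$$
\E(\#\Sur(\cok(X),G))=\sum_{F\in\Sur(V,G)}\P(FX=0),\qquad V=(\Z/a\Z)^n .
$$
I would fix the depth parameter $\delta$ (and the robustness parameter $\gamma$ entering Lemma~\ref{L:FullFcode}) small in terms of $\alpha,G,c$, and split the right-hand sum according to the depth $D$ of $F$.

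For $F$ of depth $1$: by Remark~\ref{R:depthcode} such an $F$ is a code of distance $\delta n$, so Lemma~\ref{L:FullFcode} gives $\big|\P(FX=0)-|\wedge^2 G|\,|G|^{-n}\big|\le K e^{-cn}|G|^{-n}$. Summing over the at most $|G|^n$ depth-$1$ surjections, these terms contribute $|\wedge^2 G|\,|G|^{-n}\cdot\#\{\text{depth-}1\ F\}$ up to an error of size $O(e^{-cn})$. For the count, observe that every non-surjective $F\in\Hom(V,G)$, and every surjective $F$ that is not a code of distance $\delta n$, has depth $D>1$ (take $D=[G:FV_\sigma]$ for a witnessing $\sigma$ and use $\ell(D)\ge 1$); hence $\#\{\text{depth-}1\ F\}=|G|^n-\#\{\text{depth}>1\ F\}$, and Lemma~\ref{L:countdepth}, summed over the finitely many divisors $D\mid\#G$ with $D>1$, bounds $\#\{\text{depth}>1\ F\}$ by $\sum_{D}K\binom{n}{\lceil\ell(D)\delta n\rceil-1}|G|^n D^{-n+\ell(D)\delta n}$. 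As $D\ge 2$, for $\delta$ small this is $|G|^n\cdot O(e^{-c'n})$ with $c'<\log 2$, so the depth-$1$ stratum contributes $|\wedge^2 G|+O(e^{-cn})$ overall.

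For surjective $F$ of depth $D>1$: then $[G:FV]=1<D$, so Lemma~\ref{L:probdepthestimate} applies and gives $\P(FX=0)\le Ke^{-\alpha(1-\ell(D)\delta)n}(|G|/D)^{-(1-\ell(D)\delta)n}$. Multiplying by the count from Lemma~\ref{L:countdepth}, the powers of $D$ cancel exactly and the powers of $|G|$ combine, leaving a bound of shape $K^2\binom{n}{\lceil\ell(D)\delta n\rceil-1}|G|^{\ell(D)\delta n}e^{-\alpha(1-\ell(D)\delta)n}$ for the contribution of depth $D$; for $\delta$ small the binomial coefficient and the factor $|G|^{\ell(D)\delta n}$ are subexponential of rate below $\alpha-c$, so summing over the finitely many $D$ this stratum contributes $O(e^{-cn})$ with $c<\alpha$. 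Assembling the three estimates yields $\big|\E(\#\Sur(\cok(X),G))-|\wedge^2 G|\big|\le Ke^{-cn}$, where the $\alpha$ in the admissible range of $c$ comes from the depth-conditioned probability bound and the $\log 2$ comes from the smallest index $D>1$ in the depth count.

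The main obstacle is not any individual estimate — the hard analytic inputs (the count of weak $C$ in Lemma~\ref{L:estweakC}, the linear and quadratic lower bounds on the number of nonzero coefficients in Lemmas~\ref{L:nonspec} and \ref{L:Quad}, and the depth-conditioned probability bound in Lemma~\ref{L:probdepthestimate}) are already available — but the bookkeeping of the auxiliary parameter $\delta$ (together with $\gamma$): it must be small enough that in every stratum the combinatorial overheads $\binom{n}{\lceil\ell(D)\delta n\rceil-1}$ and $|G|^{\ell(D)\delta n}$ stay negligible against the genuine exponential gains $D^{-n}$ and $e^{-\alpha n}$, simultaneously for all $D\mid\#G$. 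One then checks that a valid $\delta$ can be chosen for every $c<\min(\alpha,\log 2)$, and that all the constants absorbed into $K$ depend only on $\alpha,G,c$ and not on $n$; this is where the care lies.
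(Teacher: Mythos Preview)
Your proposal is correct and follows essentially the same approach as the paper: split the sum $\sum_{F\in\Sur(V,G)}\P(FX=0)$ according to the depth of $F$, use Lemma~\ref{L:FullFcode} for depth~$1$ (codes) to extract the main term $|\wedge^2 G|$, and combine Lemmas~\ref{L:countdepth} and~\ref{L:probdepthestimate} for depth $D>1$ so that the powers of $D$ cancel and only the small factors $\binom{n}{\lceil\ell(D)\delta n\rceil-1}|G|^{\ell(D)\delta n}e^{-\alpha(1-\ell(D)\delta)n}$ remain. Your bookkeeping for the main-term count---observing that non-surjective $F$ automatically have depth $>1$ and writing $\#\{\text{depth-}1\ F\}=|G|^n-\#\{\text{depth}>1\ F\}$---is a slight repackaging of the paper's separate treatment of non-surjections and surjective non-codes, but the content is the same.
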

\begin{proof}
We omit the details in this proof, as they are almost identical to (and slightly simpler than) the details in the proof of our next result, Theorem~\ref{T:MomGraphs}, which is in the case of main interest.
If the exponent of $G$ divides $a$, we can reduce $X$ modulo $a$ so as to agree with our notation above.
We wish to estimate
$
\sum_{F\in \Sur(V,G)} \P(FX=0).
$
Using Lemmas~\ref{L:countdepth} and \ref{L:probdepthestimate} we have
\begin{align*}
\sum_{\substack{F\in \Sur(V,G)\\ F\textrm{ not  code of distance $\delta n$}
} }
\P(FX=0)\leq  K 
 e^{-cn}.
\end{align*}
Also, from Lemma~\ref{L:countdepth}
\begin{align*}
\sum_{\substack{F\in \Sur(V,G)\\ F\textrm{ not  code of distance $\delta n$}
} }
|\wedge^2 G||G|^{-n}\leq  K 
 e^{-cn}.
\end{align*}
We also have 
 \begin{align*}
\sum_{\substack{F\in \Hom(V,G)\setminus \Sur(V,G)
} }
|\wedge^2 G||G|^{-n}\leq K 2^{-n}.
\end{align*}
Then we have, using Lemma~\ref{L:FullFcode},
\begin{align*}
\sum_{\substack{F\in \Sur(V,G)\\ F\textrm{  code of distance $\delta n$}
} }
\left| \P(FX=0) -|\wedge^2 G||G|^{-n}\right|
&\leq 
 Ke^{-cn} . 
\end{align*}
Combining, we obtain the theorem.

\end{proof}

In particular, the following theorem implies Theorem~\ref{T:Mom}.
\begin{theorem}\label{T:MomGraphs}
Let $0<q<1$ and let $G$ be a finite abelian group.  Then there exist  $c,K>0$ such that if 
 $\Gamma\in G(n,q)$ is a random graph,  $S$ is its sandpile group, for all $n$ we have
\begin{align*}
& \left| \E(\#\Sur(S,G)) -|\wedge^2 G| \right| \leq Ke^{-cn}.
\end{align*}

\end{theorem}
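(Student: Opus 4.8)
The plan is to follow the proof of Theorem~\ref{T:MomMat} for the Laplacian $L_\Gamma$, the one new feature being that $L_\Gamma$ is a symmetric matrix whose diagonal entries $-\deg(i)$ equal minus the column sums of the independent edge--indicators $X_{ij}$ ($i<j$). I deal with this by enlarging the target to $G':=G\oplus R$ with $R=\Z/a\Z$, $a=\exp(G)$, which is exactly the setting of Lemma~\ref{L:countdepthgraph}. Work over $V=R^n$; let $\mathbf 1=\sum_i v_i^*:V\to R$ be the coordinate--sum functional, $\bar Z:=\ker\mathbf 1$, and $\bar L:=L_\Gamma\bmod a$. Since $\#\Sur(S,G)=\#\Sur(S\tensor R,G)$ and $S\tensor R\isom\bar Z/\cs(\bar L)$, surjections $S\to G$ correspond to surjections $F:\bar Z\to G$ with $\cs(\bar L)\sub\ker F$. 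Each such $F$ has exactly $|G|$ extensions to a map $F':V\to G'$ with $F'|_{\bar Z}=F$ (valued in $G\oplus 0$) and $pr_2(F'v_i)=1$ for all $i$; such an $F'$ is surjective onto $G'$ precisely when $F$ is surjective onto $G$, and, because $\mathbf 1\bar L=0$ (the columns of $\bar L$ sum to $0$), one has $F'\bar L=0\iff\cs(\bar L)\sub\ker F$. Hence
$$\E(\#\Sur(S,G))=\frac{1}{|G|}\sum_{\substack{F'\in\Sur(V,G'),\\ pr_2(F'v_i)=1\ \forall i}}\P(F'\bar L=0).$$

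Next, Fourier inversion gives $\P(F'\bar L=0)=|G'|^{-n}\sum_{C'\in\Hom(V,(G')^*)}\E(\zeta^{C'(F'\bar L)})$. Substituting $\bar L_{ij}=X_{ij}$ for $i<j$ and $\bar L_{ii}=-\sum_j X_{ij}$, the diagonal is absorbed: $C'(F'\bar L)=\sum_{i<j}E'(C',F',i,j)X_{ij}$, where $E'(C',F',i,j):=E(C',F',i,j)-E(C',F',i,i)-E(C',F',j,j)=-e\bigl(C'(v_i-v_j),F'(v_i-v_j)\bigr)$, so only the independent variables $X_{ij}$ ($i<j$) enter and the anticoncentration bound proved inside Lemma~\ref{L:FullFcode} applies. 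The point is that $E'(C',F',i,j)$ depends only on $C:=pr_{G^*}\circ C'$ (the $R$--parts of $F'(v_i-v_j)$ vanish), that each $C$ comes from exactly $a^n$ maps $C'$, and that the unique $C'$ over $C$ with $E(C',F',i,i)=0$ for all $i$ satisfies $E'(C',F',i,j)=E(C',F',i,j)$ for $i<j$. Thus the Fourier sum equals $|G|^{-n}$ times the sum analysed in Section~\ref{S:MomStart} for the group $G'$ and the map $F'$, but with $C'$ ranging over the subfamily $\{E(C',F',i,i)=0\ \forall i\}$ --- a restriction that only reduces the number of error terms. In particular all \emph{special} $C'$ for $F'$ lie in this subfamily, and for $F'$ a code they number $|\wedge^2 G'|=|G|\,|\wedge^2 G|$ (Lemmas~\ref{L:sfinj} and~\ref{L:nonspec}, together with $\wedge^2(G\oplus R)\isom\wedge^2 G\oplus G$).

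The structural dichotomy of Section~\ref{S:MomStart} now applies as before: special $C'$ give the main term; weak non-special $C'$ are few (Lemma~\ref{L:estweakC}) and each contributes at most $e^{-c'n}$ (Lemma~\ref{L:nonspec}); robust $C'$ each contribute at most $e^{-c'n^2}$ (Lemma~\ref{L:Quad}). This yields the code estimate $\bigl|\P(F'\bar L=0)-|G|\,|\wedge^2 G|\,|G|^{-n}\bigr|\le Ke^{-cn}|G|^{-n}$ for $F'$ a code of distance $\delta n$, the analog of Lemma~\ref{L:FullFcode}. Running the argument of Lemma~\ref{L:probdepthestimate} --- where conditioning on a set $\tau$ of rows leaves an $\eta\times\eta$ block equal to the Laplacian of an induced subgraph plus a \emph{fixed} diagonal shift, which only multiplies $\E(\zeta^{C'(\cdot)})$ by a unit phase and so does not affect absolute values --- yields, for $F'$ of depth $D>1$, the bound $\P(F'\bar L=0)\le Ke^{-\alpha(1-\ell(D)\delta)n}(|G'|/D)^{-(1-\ell(D)\delta)n}$. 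By Remark~\ref{R:depthcode}, depth $1$ implies code; the non-codes among the $|G|^n$ maps $F'$ with $pr_2(F'v_i)=1$ (which include all non-surjective ones) are counted by Lemma~\ref{L:countdepthgraph} and contribute $O(e^{-cn})$ after division by $|G|$, while all but an $O(e^{-cn})$--fraction of those $|G|^n$ maps are codes. Collecting terms exactly as in the proof of Theorem~\ref{T:MomMat} gives $\E(\#\Sur(S,G))=|\wedge^2 G|+O(e^{-cn})$, which, since $|\wedge^2 G|=\prod_i a_i^{i-1}$, also yields Theorem~\ref{T:Mom}.

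The main obstacle is the reformulation itself: one must recognize that passing to $G\oplus R$ with the constraint $pr_2(F'v_i)=1$ simultaneously (i) correctly parametrizes the homomorphisms out of $V$ that descend to surjections of the finite group $S$ (the subtlety being which extensions to $V$ of a surjection on $\bar Z$ remain surjective) and (ii) removes the diagonal dependence of $\bar L$, because the characters $C'$ relevant to $F'\bar L=0$ automatically satisfy $E(C',F',i,i)=0$. Granting this, the estimates are close to those already in hand; the remaining care is in verifying that the conditioning step of Lemma~\ref{L:probdepthestimate} survives the fact that the relevant sub-blocks of $\bar L$ are Laplacians plus fixed diagonal shifts rather than Laplacians, and in tracking the enlarged group $G'=G\oplus R$ through the depth counts.
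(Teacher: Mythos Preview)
Your approach is correct and closely parallels the paper's: both enlarge the target to $G'=G\oplus R$ with the constraint $pr_2(F'v_i)=1$, arrive at $\E(\#\Sur(S,G))=|G|^{-1}\sum_{F'}\P(F'\bar L=0)$, and invoke Lemma~\ref{L:countdepthgraph} for the depth counts. The difference lies in how $\P(F'\bar L=0)$ is analysed.

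The paper introduces an auxiliary symmetric matrix $X$ whose off-diagonal entries agree with $\bar L$ but whose diagonal entries are \emph{independent uniform} in $R$; then $\bar L$ is distributed as $X$ conditioned on $F_0X=0$ (where $F_0v_i=1$), and since $\P(F_0X=0)=a^{-n}$ regardless of the off-diagonal, one obtains $\P(F\bar L=0)=a^n\,\P(\tilde FX=0)$. This reduces everything to a matrix with fully independent entries, so Lemmas~\ref{L:FullFcode} and~\ref{L:probdepthestimate} apply to $(\tilde F,X,G')$ \emph{verbatim}: nothing needs to be re-derived for the Laplacian.

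You instead expand the Fourier sum for $\bar L$ directly, substitute the diagonal, and observe that the resulting coefficients $E'$ depend only on $pr_{G^*}\circ C'$; identifying each $C$ with the unique $C'_0$ having zero diagonal coefficients recovers the Section~\ref{S:MomStart} analysis on a subfamily of $C'$. This is in fact equivalent to the paper's computation (in the paper's Fourier sum for $X$, the uniform diagonal annihilates exactly the $C'$ outside your subfamily), but it obliges you to re-run the depth argument for $\bar L$: the $\eta\times\eta$ block is a shifted Laplacian rather than an independent-entry matrix, and you must check that your code estimate still applies to $F'_\eta:R^\eta\to H'$ even when $H'\leq G\oplus R$ need not split as $H_1\oplus R$. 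This does go through (the restriction $(H')^*\to H_1^*$ is surjective because $R$ is self-injective, so the same collapse of the $C'$-sum occurs), but it is additional work that the paper's conditioning trick avoids entirely. What your approach buys is a more explicit view of why the diagonal dependence is harmless; what the paper's buys is modularity.
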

\begin{proof}
Let $a$ be the exponent of $G$.  Let $R=\Z/a\Z$.  
Note that $\#\Sur(S,G)=\#\Sur(S\tensor R,G)$, so throughout this proof we will let $\bS:=S\tensor R$.
We let $\bL$ be the reduction of the Laplacian $L$ modulo $a$, so $\bL$ is an $n\times n$ matrix with coefficients in $R$. 

We let $X$ be an $n \times n$ random symmetric matrix with coefficients in $R$ with $X_{ij}$ distributed as $\bL_{ij}$ for $i<j$ and with $X_{ii}$ distributed uniformly in $R$, with all $X_{ij}$ (for $i<j$) and $X_{ii}$ independent.
Let $F_0\in \Hom(V,R)$ be the map that sends each $v_i$ to $1$.
If we condition on $F_0 X=0$, then we find that $X$ and $\bL$ have the same distribution.
In particular, given $X$ and conditioning on the off diagonal entries, we see that the probability that $F_0 X=0$ is $a^{-n}$ (for any choice of off diagonal entries).
So any choice of off diagonal entries is equally likely in $\bL$ as in $X$ conditioned on $F_0 X=0$.

Recall $V=R^n$.
So for $F\in \Hom(V,G)$, we have
\begin{align*}
\P(F\bL=0)&=\P(FX=0 | F_0 X=0)
=\P(FX=0 \textrm{ and } F_0 X=0)a^{n}.
\end{align*}
Let $\tilde{F}\in\Hom(V, G\oplus R)$ be the sum of $F$ and $F_0$.

Let $Z\sub V$ denote the vectors whose coordinates sum to $0$, i.e. $Z=\{v\in V \ |\ F_0v=0\}$.
Let $\Sur^* (V,G)$ denote the maps from $V$ to $G$ that are a surjection when restricted to $Z$.
We wish to estimate
\begin{align*}
\E(\#\Sur(\bS,G))&=\E(\#\Sur(Z/\cs(\bL),G))\\
&=\sum_{F\in \Sur(Z,G)}   \P(F\bL=0)\\
&=\frac{1}{|G|}\sum_{F\in \Sur^*(V,G)} \P(F\bL=0) \\&={|G|^{-1}a^n}\sum_{F\in \Sur^*(V,G)} \P(\tilde{F}X=0).
\end{align*}
Note that if $F: V \ra G$ is a surjection when restricted to $Z$, then $\tilde{F}$ is a surjection from $V$ to $G\oplus R$.

We start by considering the part of the sum to which we can apply Lemma~\ref{L:probdepthestimate}.
We let $K$ change in each line, as long as it is a constant depending only on $q,G,\delta$.
Let $\alpha=\max(q,1-q)$.
We then have
\begin{align*}
&\frac{a^n}{|G|}\sum_{\substack{F\in \Sur^*(V,G)\\ \tilde{F}\textrm{ not  code of distance $\delta n$}
} }
\P(\tilde{F}X=0)\\
&\leq 
\frac{a^n}{|G|}\sum_{\substack{D>1\\ D\mid\#G}} \sum_{\substack{F\in \Sur^*(V,G)\\ \tilde{F}\textrm{  depth $D$}}}
\P(\tilde{F}X=0) \quad\quad \textrm{(by Remark~\ref{R:depthcode})}\\
&\leq\frac{a^n}{|G|}\sum_{\substack{D>1\\ D\mid\#G}} 
\#\{ \tilde{F}\in
\Hom(V,G\oplus R) \textrm{ depth } D\ | \ pr_2(v_i)=1 \textrm{ for all }i 
\}
 K  e^{-\alpha(1-\ell(D)\delta)n} (a|G|/D)^{-(1-\ell(D)\delta)n}\\
&\leq \frac{a^n}{|G|}\sum_{\substack{D>1\\ D\mid\#G}} 
K\binom{n}{\lceil \ell(D)\delta n \rceil -1} |G|^nD^{-n+\ell(D)\delta n}
  e^{-\alpha(1-\ell(D)\delta)n} (a|G|/D)^{-(1-\ell(D)\delta)n} \quad\quad \textrm{(by Lemma~\ref{L:countdepthgraph})}\\
    &\leq
K\binom{n}{\lceil \ell(|G|)\delta n \rceil -1} 
  e^{-\alpha(1-\ell(|G|)\delta)n} (a|G|)^{\delta\ell(|G|)n}\\
 &\leq  K 
 e^{-cn}.
\end{align*}
For any $0<c<\alpha$, we can choose $\delta$ small enough so that 
$$\binom{n}{\lceil \ell(|G|)\delta n \rceil -1} 
  e^{-\alpha(1-\ell(|G|)\delta)n} (a|G|)^{\delta\ell(|G|)n} \leq e^{-cn} ,$$
 and the last inequality in the long chain above holds.

Also,
\begin{align*}
\sum_{\substack{F\in \Sur^*(V,G)\\ \tilde{F}\textrm{ not  code of distance $\delta n$}
} }
|\wedge^2 G||G|^{-n}&\leq 
\sum_{\substack{D>1\\ D\mid\#G}} \sum_{\substack{F\in \Sur^*(V,G)\\ \tilde{F}\textrm{  depth $D$}
} }
|\wedge^2 G||G|^{-n}\\
\textrm{(by Lemma~\ref{L:countdepthgraph})}
&\leq \sum_{\substack{D>1\\ D\mid\#G}}  K\binom{n}{\lceil \ell(D)\delta n \rceil -1} |G|^n|D|^{-n+\ell(D)\delta n}|\wedge^2 G||G|^{-n} 
\\
&\leq K \binom{n}{\lceil \ell(|G|)\delta n \rceil -1}2^{-n+\ell(|G|)\delta n}  \\
 &\leq  K 
 e^{-cn}.
\end{align*}
For any $0<c<\log(2)$, we can choose $\delta$ small enough so that 
$\binom{n}{\lceil \ell(|G|)\delta n \rceil -1}2^{-n+\ell(|G|)\delta n} \leq e^{-cn} ,$
 and the last inequality above holds. 

We also have 
 \begin{align*}
\sum_{\substack{F\in \Hom(V,G)\setminus \Sur^*(V,G)
} }
|\wedge^2 G||G|^{-n}&\leq 
\sum_{H \textrm{ proper s.g of } G} \sum_{\substack{F\in \Hom(Z,H)\
} }
|\wedge^2 G||G|^{-n+1}\\
&\leq 
\sum_{H \textrm{ proper s.g of } G} |H|^{n-1}
|\wedge^2 G||G|^{-n+1}\\
&\leq K 2^{-n}.
\end{align*}

Then we have, using Lemma~\ref{L:FullFcode},
\begin{align*}
\sum_{\substack{F\in \Sur^*(V,G)\\ \tilde{F}\textrm{  code of distance $\delta n$}
} }
\left| \P(\tilde{F}X=0) -|\wedge^2 (G\oplus R)|(a|G|)^{-n}\right|
&\leq 
 \sum_{\substack{F\in \Sur^*(V,G)\\ \tilde{F}\textrm{  code of distance $\delta n$}
} } Ke^{-cn}  (a|G|)^{-n}\\
&\leq 
 Ke^{-cn}a^{-n} . 
\end{align*}

In conclusion
\begin{align*}
& \left| \frac{a^n}{|G|}\left(\sum_{F\in \Sur^*(V,G)} \P(\tilde{F}X=0) \right) -|\wedge^2 G| \right| \\
&\leq
\left| \frac{{a^n}}{|G|}\sum_{\substack{F\in \Sur^*(V,G)\\ \tilde{F}\textrm{ not  code of distance $\delta n$}
} }
\P(\tilde{F}X=0) \right| +
\frac{a^n}{|G|}\sum_{\substack{F\in \Sur^*(V,G)\\ \tilde{F}\textrm{  code of distance $\delta n$}
} }
\left| \P(\tilde{F}X=0) -|\wedge^2(G\oplus R)|(a|G|)^{-n}\right|\\
&+\left| -|\wedge^2 G|+
\sum_{\substack{F\in \Sur^*(V,G)\\ \tilde{F}\textrm{  code of distance $\delta n$} 
} } |\wedge^2(G\oplus R)|(a|G|)^{-n}
\right|\\
&\leq
 Ke^{-cn}+
\sum_{\substack{F\in \Sur^*(V,G), \\ \tilde{F}\textrm{ not code of distance $\delta n$} 
} } |\wedge^2 G||G|^{-n}
+
\sum_{\substack{F\in\Hom(V,G)\setminus \Sur^*(V,G)
} } |\wedge^2 G||G|^{-n}
\\
&\leq Ke^{-cn}.
\end{align*}

Recall from above, we have 
\begin{align*}
\E(\#\Sur(S,G))={|G|^{-1}a^n}\sum_{F\in \Sur^*(V,G)} \P(\tilde{F}X=0),
\end{align*}
and so we conclude the proof of the theorem.

\end{proof}

\section{Basic estimates on abelian groups}\label{S:Abelian}
In this section we collect some basic estimates on numbers of maps between  finite abelian groups.  We do not claim any originality of the results in this section, but we merely collect them here for completeness.
We write $G_\lambda$ for the abelian $p$-group of type $\lambda$.

\begin{lemma}\label{L:Hom}
 We have $$ |\Hom(G_\mu,G_\lambda)|=p^{\sum_i\mu'_i \lambda'_i }.$$
\end{lemma}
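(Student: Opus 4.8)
The plan is to compute $|\Hom(G_\mu, G_\lambda)|$ by decomposing a homomorphism according to its action on a fixed set of generators of $G_\mu$. Write $G_\mu = \bigoplus_{i=1}^{s} \Z/p^{\mu_i}\Z$ with generators $f_i$ of order $p^{\mu_i}$, and $G_\lambda = \bigoplus_{j=1}^{t} \Z/p^{\lambda_j}\Z$ with generators $e_j$ of order $p^{\lambda_j}$. A homomorphism $G_\mu \to G_\lambda$ is determined by the images of the $f_i$, subject only to the constraint that $p^{\mu_i}(\text{image of }f_i) = 0$ in $G_\lambda$.

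First I would count, for a single cyclic source $\Z/p^{m}\Z$, the number of elements of $G_\lambda$ killed by $p^{m}$. In $\Z/p^{\lambda_j}\Z$, the subgroup killed by $p^{m}$ has order $p^{\min(m,\lambda_j)}$, so the number of valid images of a generator of order $p^m$ is $\prod_{j=1}^{t} p^{\min(m,\lambda_j)}$. Hence
\begin{equation*}
|\Hom(\Z/p^m\Z, G_\lambda)| = p^{\sum_j \min(m,\lambda_j)}.
\end{equation*}
Since $\Hom(\bigoplus_i \Z/p^{\mu_i}\Z, G_\lambda) = \prod_i \Hom(\Z/p^{\mu_i}\Z, G_\lambda)$, multiplying over $i$ gives $|\Hom(G_\mu, G_\lambda)| = p^{\sum_i \sum_j \min(\mu_i,\lambda_j)}$.

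Finally I would rewrite the double sum $\sum_{i}\sum_{j}\min(\mu_i,\lambda_j)$ in terms of transposes. The standard identity is $\sum_{i}\sum_{j}\min(\mu_i,\lambda_j) = \sum_{k\geq 1}\mu'_k\lambda'_k$: indeed $\min(\mu_i,\lambda_j) = \#\{k\geq 1 : k\leq \mu_i \text{ and } k\leq \lambda_j\}$, so summing over $i,j$ and interchanging the order of summation yields $\sum_{k}\bigl(\#\{i: \mu_i\geq k\}\bigr)\bigl(\#\{j: \lambda_j\geq k\}\bigr) = \sum_k \mu'_k\lambda'_k$ by the definition of the transpose partition. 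This gives the claimed formula. The only mildly delicate point is the transpose identity, but it is a routine counting argument of exactly the same flavor as the column-versus-row summation already used in the excerpt for $|\wedge^2 G_p|$; there is no real obstacle here.
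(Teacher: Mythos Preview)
Your proof is correct and follows essentially the same approach as the paper: both count images of cyclic generators to get $|\Hom(G_\mu,G_\lambda)| = p^{\sum_{i,j}\min(\mu_i,\lambda_j)}$ and then convert the exponent to $\sum_k \mu'_k\lambda'_k$ by a transpose-partition counting argument. The only cosmetic difference is that the paper first rewrites $\sum_j \min(k,\lambda_j) = \sum_{j=1}^k \lambda'_j$ and then reads off the coefficient of $\lambda'_k$, whereas you interchange summation directly via $\min(\mu_i,\lambda_j) = \#\{k : k\le \mu_i,\, k\le \lambda_j\}$.
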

\begin{proof}
A generator of $G_\mu$ of order $p^k$, can map to any element of $G_\lambda$ of order dividing $p^k$.
We have that $\Z/p^{\lambda_i}\Z$ has $p^{\min (k, \lambda_i)}$ elements of order dividing $p^k$.
So, we have $p^{\sum_i \min (k, \lambda_i)}$ that the generator can map to.  Note that 
$\sum_i \min (k, \lambda_i)=\sum_{j=1}^k \lambda_j'.$
So we have
\begin{align*}
  |\Hom(G_\mu,G_\lambda)| = p^{\sum_i  \sum_{j=1}^{\mu_i }\lambda_j' }.
\end{align*}

In the sum $
\sum_i  \sum_{j=1}^{\mu_i }\lambda_j' 
$, we consider the coefficient of $\lambda'_k$.  The coefficient of $\lambda'_k$
is the number of the $\mu_i$ such that $\mu_i\geq k,$ which is $\mu'_k$.  So $
\sum_i  \sum_{j=1}^{\mu_i }\lambda_j' =\sum_i \mu_i'\lambda_i'.$
\end{proof}

\begin{lemma}\label{L:Aut}
We have
$$
(\prod_{i\geq 1} (1-2^{-i}))^{\lambda_1}p^{\sum_i(\lambda'_i)^2 } \leq |\Aut(G_\lambda)| \leq p^{\sum_i(\lambda'_i)^2  }.
$$
\end{lemma}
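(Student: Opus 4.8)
The plan is to compute $|\Aut(G_\lambda)|$ exactly and then read off the two inequalities. The upper bound is free: $\Aut(G_\lambda)\subseteq\End(G_\lambda)=\Hom(G_\lambda,G_\lambda)$, which has order $p^{\sum_i(\lambda_i')^2}$ by Lemma~\ref{L:Hom} applied with $\mu=\lambda$.

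For the exact count I would fix generators $e_1,\dots,e_r$ of $G_\lambda=\bigoplus_{i}\Z/p^{\lambda_i}\Z$ with $\lambda_1\ge\cdots\ge\lambda_r\ge 1$ and write an endomorphism $\phi$ as $\phi(e_j)=\sum_i c_{ij}e_i$, where $c_{ij}$ is taken modulo $p^{\min(\lambda_i,\lambda_j)}$ and must satisfy $p^{\lambda_i-\lambda_j}\mid c_{ij}$ whenever $\lambda_i>\lambda_j$ (so that $\phi(e_j)$ has order dividing $p^{\lambda_j}$). First I would invoke Nakayama's lemma for the finitely generated module $G_\lambda$ over the local ring $\Z/p^{\lambda_1}\Z$ to conclude that $\phi\in\Aut(G_\lambda)$ iff $\phi$ is surjective iff its reduction $\bar\phi$ on $G_\lambda/pG_\lambda\cong\F_p^{r}$ is invertible. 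Reading off the matrix $(\bar c_{ij})$ of $\bar\phi$: the divisibility constraint forces $\bar c_{ij}=0$ exactly when $\lambda_i>\lambda_j$, while the remaining entries are unconstrained in $\F_p$; hence (with generators ordered by decreasing $\lambda_i$) $\bar\phi$ is block upper triangular, with diagonal blocks indexed by the distinct part sizes $k$ occurring in $\lambda$, the block for $k$ having size $m_k:=\#\{i:\lambda_i=k\}=\lambda'_k-\lambda'_{k+1}$.

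The counting step is then short. Choosing $\phi\in\End(G_\lambda)$ uniformly makes the coefficients $c_{ij}$ with $\lambda_i\le\lambda_j$ independent and uniform, so the diagonal blocks of $\bar\phi$ become independent uniform matrices over $\F_p$; since a block upper triangular matrix is invertible iff every diagonal block is, I obtain
$$
\frac{|\Aut(G_\lambda)|}{|\End(G_\lambda)|}=\prod_{k\ge1}\frac{|\GL_{m_k}(\F_p)|}{p^{m_k^2}}=\prod_{k\ge1}\prod_{j=1}^{m_k}(1-p^{-j}),
$$
that is, $|\Aut(G_\lambda)|=p^{\sum_i(\lambda'_i)^2}\prod_{k=1}^{\lambda_1}\prod_{j=1}^{m_k}(1-p^{-j})$ (using that $m_k=0$ for $k>\lambda_1$). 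This exact formula is also what is cited for \eqref{E:main}. For the lower bound I would note that each of these $\lambda_1$ factors satisfies $\prod_{j=1}^{m_k}(1-p^{-j})\ge\prod_{j\ge1}(1-p^{-j})\ge\prod_{j\ge1}(1-2^{-j})$, the last step because $1-p^{-j}\ge 1-2^{-j}$; multiplying over $k=1,\dots,\lambda_1$ gives $|\Aut(G_\lambda)|\ge\bigl(\prod_{j\ge1}(1-2^{-j})\bigr)^{\lambda_1}p^{\sum_i(\lambda'_i)^2}$.

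I expect the only genuine obstacle to be justifying carefully the block upper triangular description of the image of $\End(G_\lambda)$ in $\End(G_\lambda/pG_\lambda)$ and the accompanying independence of its diagonal blocks under a uniform random endomorphism; once these structural facts are in hand, the $\GL$-count, the product manipulation, and the comparison $1-p^{-j}\ge 1-2^{-j}$ are all routine.
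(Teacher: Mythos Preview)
Your proposal is correct and follows essentially the same approach as the paper: the upper bound via Lemma~\ref{L:Hom}, the reduction via Nakayama to invertibility mod $p$, the block upper triangular structure with diagonal blocks of size $m_k=\lambda'_k-\lambda'_{k+1}$, and the resulting exact formula $|\Aut(G_\lambda)|=p^{\sum_i(\lambda'_i)^2}\prod_{k=1}^{\lambda_1}\prod_{j=1}^{m_k}(1-p^{-j})$. The paper cites this formula as standard and sketches exactly the Nakayama/block argument you give; your derivation of the lower bound from the exact formula is also the intended one, and is in fact more explicit than what the paper writes out.
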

\begin{proof}
 The right inequality follows from Lemma~\ref{L:Hom}.  
Let $m_i:=\lambda'_i-\lambda'_{i+1}$, so if $G_i=(\Z/p^i\Z)^{m_i}$, then $G_\lambda=\bigoplus_{i} G_i$.
We claim
$$
|\Aut(G_\lambda)|=p^{\sum_i (\lambda'_i)^2 } \prod_{i=1}^{\lambda_1}  \prod_{j=1}^{m_i} (1-p^{-j}).
$$ 
This is a standard fact (see e.g. \cite[Theorem 4.1]{HR07}), but we point out some observations that make it simple to prove.
By Nakayama's Lemma, a homomorphism $\phi: G_\lambda \ra G_\lambda$ is an isomorphism if and only if it is an isomorphism modulo $p$.  Modulo $p$, the summand $G_i$ can only map nontrivially to $G_j$ for $j\leq i$.  
Thus it follows that $\phi$ is an isomorphism if and only if it gives an isomorphism  $G_i/pG_i \ra G_i/pG_i$ for each $k$.

Similarly, and using that fact that 
$
\#\{ \textrm{symmetric matrices in $\GL_n(\Z/p\Z)$}\}=
p^{\frac{n(n+1)}{2}} \prod_{j=1}^{\lceil n/2 \rceil} (1-p^{1-2j})
$ (from \cite[Theorem2]{MacWilliams1969}), we can prove
$$
\#\{\textrm{symmetric, bilinear, perfect $G\times G
\ra \C^*$} \}=p^{\sum_i \lambda'_i(\lambda'_i+1)/2 }\prod_{i=1}^{\lambda_1}\prod_{j=1}^{\lceil m_i/2 \rceil} (1-p^{1-2j}) ,
$$ 
which is used in Equation~\eqref{E:main}.

\end{proof}

\begin{lemma}\label{L:Nsub}
 Let $\mu$ and $\lambda$ be partitions.  Let $G_{\mu,\lambda}$ be the set of subgroups of $G_\lambda$ that are isomorphic to $G_\mu$.  Then
$$
| G_{\mu,\lambda}| \leq \frac{1}{(\prod_{i\geq 1} (1-2^{-i}))^{\lambda_1}} p^{\sum_{i=1}^{\lambda_1}\mu'_i \lambda'_i  -(\mu'_i) ^2  }.
$$
\end{lemma}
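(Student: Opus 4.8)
\emph{Plan.} The standard device is to count injective homomorphisms and divide by automorphisms. Every subgroup $H\le G_\lambda$ with $H\cong G_\mu$ is the image of exactly $|\Aut(G_\mu)|$ injective homomorphisms $G_\mu\to G_\lambda$ (namely the compositions of the inclusion $H\hookrightarrow G_\lambda$ with the $|\Isom(G_\mu,H)|=|\Aut(G_\mu)|$ isomorphisms $G_\mu\to H$), and distinct such subgroups account for disjoint sets of injections. Hence
$$
|G_{\mu,\lambda}|=\frac{\#\{\text{injective homomorphisms }G_\mu\to G_\lambda\}}{|\Aut(G_\mu)|}\le \frac{|\Hom(G_\mu,G_\lambda)|}{|\Aut(G_\mu)|}.
$$

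First I would bound the numerator: by Lemma~\ref{L:Hom}, $|\Hom(G_\mu,G_\lambda)|=p^{\sum_i \mu'_i\lambda'_i}$. Next I would bound the denominator from below: applying the left inequality of Lemma~\ref{L:Aut} with $\lambda$ replaced by $\mu$ gives $|\Aut(G_\mu)|\ge (\prod_{i\ge 1}(1-2^{-i}))^{\mu_1}\,p^{\sum_i (\mu'_i)^2}$. Combining the two estimates yields
$$
|G_{\mu,\lambda}|\le \frac{1}{(\prod_{i\ge 1}(1-2^{-i}))^{\mu_1}}\; p^{\sum_i \mu'_i\lambda'_i-(\mu'_i)^2}.
$$

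It then remains only to massage this into the form stated. If $G_{\mu,\lambda}=\emptyset$ the claimed inequality is trivial, so assume there is an embedding $G_\mu\hookrightarrow G_\lambda$; then $G_\lambda$ contains an element of order $p^{\mu_1}$, forcing $\mu_1\le\lambda_1$. Since $0<\prod_{i\ge 1}(1-2^{-i})<1$, lowering the exponent from $\lambda_1$ to $\mu_1$ only enlarges the prefactor, i.e. $1/(\prod_{i\ge 1}(1-2^{-i}))^{\mu_1}\le 1/(\prod_{i\ge 1}(1-2^{-i}))^{\lambda_1}$; and since $\mu'_i=0$ for $i>\mu_1$, the exponent sum may harmlessly be extended to $\sum_{i=1}^{\lambda_1}$. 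This gives exactly the asserted bound. The only ingredient beyond bookkeeping is the observation $\mu_1\le\lambda_1$, so there is no real obstacle here: the lemma is an immediate consequence of Lemmas~\ref{L:Hom} and~\ref{L:Aut}.
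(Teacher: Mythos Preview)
Your proof is correct and follows essentially the same approach as the paper: both count injective homomorphisms, divide by $|\Aut(G_\mu)|$, bound by $|\Hom(G_\mu,G_\lambda)|/|\Aut(G_\mu)|$, and then invoke Lemmas~\ref{L:Hom} and~\ref{L:Aut} together with the observation that $\mu_1\le\lambda_1$ when $G_{\mu,\lambda}\ne\emptyset$. Your write-up simply spells out the final massaging of the constant and the sum a bit more explicitly than the paper does.
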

\begin{proof}
Let $\operatorname{Inj}(G_\mu,G_\lambda)$ denote the set of injections from $G_\mu$ to $G_\lambda$.
We have  a map
 $$\operatorname{Inj}(G_\mu,G_\lambda) \ra G_{\mu,\lambda}$$
taking an injection to its image.  The group $\Aut(G_\mu)$ acts simply transitively on the fibers of this map,
so
$$
| G_{\mu,\lambda}| = \frac{|\operatorname{Inj}(G_\mu,G_\lambda)|}{|\Aut(G_\mu)|}\leq
\frac{|\Hom(G_\mu,G_\lambda)|}{|\Aut(G_\mu)|}.
$$ 
Note if $\mu_1>\lambda_1$, then $G_{\mu,\lambda}=0$.
When $\mu_1\leq \lambda_1$, we apply Lemmas~\ref{L:Hom} and \ref{L:Aut} to obtain the result.
\end{proof}

\begin{lemma}\label{L:BoundMom}
Let $G_\lambda$ be an abelian $p$-group of type $\lambda$.
Let $F=\frac{2}{1-2^{-1/8}}\prod_{i\geq 1} (1-2^{-i})^{-1}.$ 
$$
\sum_{G_1 \textrm{ subgroup of } G } |\wedge^2 G_1|\leq \prod_{j=1}^s {F^{\lambda_1}p^{\sum_i \frac{\lambda'_i(\lambda'_i-1)}{2}}}.
$$
\end{lemma}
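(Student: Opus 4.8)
The plan is to first reduce to the case that $G$ is a $p$-group, and then to organize the sum over subgroups by isomorphism type. Both sides of the asserted inequality are multiplicative over Sylow subgroups: $\wedge^2 G_1=\bigoplus_p \wedge^2 (G_1)_p$, and a subgroup of $G$ is exactly an independent choice of subgroup of each Sylow subgroup $G_p$, so $\sum_{G_1\le G}|\wedge^2 G_1|=\prod_p \sum_{H\le G_p}|\wedge^2 H|$. This is also why the right-hand side is written as a product over the primes $p_1,\dots,p_s$ dividing $|G|$. Hence it suffices to prove, for a $p$-group $G_\lambda$,
$$
\sum_{H\le G_\lambda}|\wedge^2 H| \le F^{\lambda_1}\, p^{\sum_i \frac{\lambda'_i(\lambda'_i-1)}{2}}.
$$

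Next I would group the subgroups of $G_\lambda$ by isomorphism class. Writing $G_{\mu,\lambda}$ as in Lemma~\ref{L:Nsub} and recalling from Section~\ref{S:Back} that $|\wedge^2 G_\mu|=p^{\sum_i \frac{\mu'_i(\mu'_i-1)}{2}}$, we get $\sum_{H\le G_\lambda}|\wedge^2 H|=\sum_\mu |G_{\mu,\lambda}|\,p^{\sum_i \frac{\mu'_i(\mu'_i-1)}{2}}$, where only $\mu$ with $\mu\subseteq\lambda$ (equivalently $\mu'_i\le\lambda'_i$ for all $i$, the condition for $G_\mu$ to embed in $G_\lambda$) contribute. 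Plugging in the bound of Lemma~\ref{L:Nsub} and combining the exponents, a short computation shows that the power of $p$ in the $\mu$-term is at most
$$
\sum_i \mu'_i\lambda'_i-\frac{(\mu'_i)^2+\mu'_i}{2}=\sum_i \frac{\lambda'_i(\lambda'_i-1)}{2}-\sum_i \frac{(\lambda'_i-\mu'_i)(\lambda'_i-\mu'_i-1)}{2},
$$
so after factoring out $p^{\sum_i \lambda'_i(\lambda'_i-1)/2}$ it remains to bound
$$
\Bigl(\prod_{i\ge1}(1-2^{-i})\Bigr)^{-\lambda_1}\ \sum_{\mu\subseteq\lambda} p^{-\sum_i \binom{\lambda'_i-\mu'_i}{2}}
$$
by $F^{\lambda_1}$, where $\binom{k}{2}:=k(k-1)/2\ge0$ for integers $k\ge0$.

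For the last step I would drop the constraint that $\mu'$ is a partition and let the tuple $(\mu'_1,\dots,\mu'_{\lambda_1})$ range over $\prod_{i=1}^{\lambda_1}\{0,1,\dots,\lambda'_i\}$ (recall $\lambda'_i\ge1$ precisely for $i\le\lambda_1$, so there are exactly $\lambda_1$ factors), which makes the sum factor as $\prod_{i=1}^{\lambda_1}\sum_{d=0}^{\lambda'_i}p^{-\binom d2}\le\bigl(\sum_{d\ge0}p^{-\binom d2}\bigr)^{\lambda_1}$. Since $\binom d2\ge d-1$ for all $d\ge0$ (because $(d-1)(d-2)\ge0$), the inner sum is at most $1+\sum_{e\ge0}p^{-e}=1+\tfrac{p}{p-1}\le 3$ for $p\ge2$. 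Thus the whole sum is at most $\bigl(3\prod_{i\ge1}(1-2^{-i})^{-1}\bigr)^{\lambda_1}$, and this is $\le F^{\lambda_1}$ since $3\le\frac{2}{1-2^{-1/8}}$ (equivalently $2^{1/8}\le3$). The main point to handle carefully is the exponent bookkeeping in the middle paragraph: one must check that the "defect" $\sum_i\binom{\lambda'_i-\mu'_i}{2}$ really is nonnegative (this uses the containment $\mu\subseteq\lambda$ forced by $G_\mu$ being a subgroup of $G_\lambda$), and that only the indices $i\le\lambda_1$ contribute, so that the factored geometric-series bound produces exactly $\lambda_1$ constant factors.
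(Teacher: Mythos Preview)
Your proof is correct and follows the same overall strategy as the paper: organize the sum by isomorphism type $\mu$, apply Lemma~\ref{L:Nsub}, drop the partition constraint on $(\mu'_1,\dots,\mu'_{\lambda_1})$ so that the sum factors over $i\le\lambda_1$, and bound each factor by an absolute constant. The only difference is in the exponent bookkeeping: the paper completes the square and works with $p^{1/8}$-powers, bounding each factor by $\tfrac{2}{1-p^{-1/8}}\le D=\tfrac{2}{1-2^{-1/8}}$, whereas you use the exact integer identity $\mu'_i\lambda'_i-\tfrac{\mu'_i(\mu'_i+1)}{2}=\tfrac{\lambda'_i(\lambda'_i-1)}{2}-\binom{\lambda'_i-\mu'_i}{2}$ and bound $\sum_{d\ge0}p^{-\binom{d}{2}}\le 3$; your route is a bit cleaner and gives a smaller constant, but both sit comfortably below $F$.
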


\begin{proof}
 We have  
$$
\sum_{G_1 \textrm{ subgroup of } G } |\wedge^2 G_1| =\sum_\mu |G_{\mu,\lambda} |p^{\sum_j \frac{\mu'_j(\mu'_j-1)}{2}}.
$$
Note that we only have to sum over $\mu$ that are subpartitions of $\lambda$, or else $| G_{\mu,\lambda} |=0$.
In particular, we only have to sum over $\mu$ such that $\mu_1\leq \lambda_1$.

Let $C:=\prod_{i\geq 1} (1-2^{-i})$ and $D:=\frac{2}{1-2^{-1/8}}$.
We apply Lemma~\ref{L:Nsub},
\begin{align*}
\sum_\mu | G_{\mu,\lambda}| p^{\sum_j \frac{\mu'_j(\mu'_j-1)}{2}}
&\leq \frac{1}{C^{\lambda_1}} \sum_{\mu, \mu_1\leq \lambda_1}  p^{\sum_{i=1}^{\lambda_1} \mu'_i 
\lambda'_i  -(\mu'_i) ^2 + \frac{\mu'_i(\mu'_i-1)}{2}}\\
&= 
\frac{1}{C^{\lambda_1}} \sum_{d_1,\dots,d_{\lambda_1}\geq 0}  p^{\sum_{i=1}^{\lambda_1} d_i \lambda'_i  -d_i ^2 + \frac{d_i(d_i-1)}{2}}\\
&= 
\frac{p^{\sum_i\frac{\lambda'_i(\lambda'_i-1)}{2}}}{C^{\lambda_1}} \sum_{d_1,\dots,d_{\lambda_1}\geq 0}  (p^{\frac{1}{8}})^{\sum_{i=1}^{\lambda_1} -(2d_i - 2\lambda_i' +1)^2 +1}\\
&\leq 
\frac{p^{\sum_i\frac{\lambda'_i(\lambda'_i-1)}{2}}}{C^{\lambda_1}} \sum_{e_1,\dots,e_{\lambda_1}\in \Z,\ e_i \textrm{ odd}}  (p^{\frac{1}{8}})^{\sum_{i=1}^{\lambda_1} -e_i^2 +1 }.
\end{align*}
We have that
$$
\sum_{e_1\in \Z,\ e_1 \textrm{ odd}} (p^{\frac{1}{8}})^{ -e_1^2 +1}\leq 2 \sum_{e_1 \geq 1} (p^{\frac{1}{8}})^{ -e_1^2 +1}
\leq 2 \sum_{e_1 \geq 1} (p^{\frac{1}{8}})^{ -e_1 +1 }=\frac{2}{1-p^{-1/8}}\leq D.
$$
So, applying this to each of the $\lambda_1$ sums, we have
\begin{align*}
\sum_{G_1 \textrm{ subgroup of } G } |\wedge^2 G_1| 
&\leq \frac{p^{\sum_i\frac{\lambda'_i(\lambda'_i-1)}{2}}}{C^{\lambda_1}} \sum_{e_1,\dots,e_{\lambda_1}\in \Z,\ e_i \textrm{ odd}}  (p^{\frac{1}{8}})^{\sum_{i=1}^{\lambda_1} -e_i^2 +1 }
&\leq 
\frac{p^{\sum_i \frac{\lambda'_i(\lambda'_i-1)}{2}}}{C^{\lambda_1}} D^{\lambda_1}.
\end{align*}

\end{proof}

\section{Moments determine the distribution}\label{S:MomDet}

In this section we will see that the moments we have found in fact determine the distributions of our group valued random variables.
We have been working with the moments $\E(\# \Sur(-,G))$ so far.  As we have seen, these take nice values.  Summing over all subgroups of $G$, we then obtain the moments $\E(\# \Hom(-,G)).$  From the point of view of how we found these moments, the ``Hom'' moments are just derivative from the ``Sur'' moments.  However, from an analytic point of view, the moments $\E(\# \Hom(-,G))$ are easier to work with.  For example, if $G=(\Z/p\Z)^k$ for a prime $p$, then $\# \Hom(H,G)=p^{(p\textrm{-rank}(H))k}$.  So for $G=(\Z/p\Z)^k$, the Hom moments give the usual moments of the random variable $p^{p\textrm{-rank}(H)}$.  
For example, when $S$ is a sandpile group of a random graph, as above, we have that $\E(\left(p^{p\textrm{-rank}(S)}\right)^k)$ is on the order of $p^{k(k-1)/2}$ (see Lemma~\ref{L:BoundMom}).

As discussed in the introduction, these moments are too big to use Carleman's condition to recover the distribution, but we can take advantage of the fact that we know the random variable takes only values $p^i$, where $i$ is a non-negative integer. 
In this case, we can view the problem as one of a countably infinite system of linear equations which we would like to show has a unique (non-negative) solution. 
Heath-Brown \cite[Lemma 17]{Heath-Brown1994-1} (see also \cite[Lemma 17]{Heath-Brown1994}) and Fouvry and Kl\"{u}ners  \cite[Section 4.2]{Fouvry2006} have methods which will show that for 
$\E(\left(p^{p\textrm{-rank}(S)}\right)^k)$ on the order of $p^{k(k-1)/2}$, the moments do indeed determine the distribution of $p\textrm{-rank}(S)$.  (See also \cite[Lemma 8.1]{EVW09} for the case when all the Sur moments are $1$.)
However, this will at best determine the distribution of the $p$-ranks of the sandpile groups.

Since we would like to determine more than just the $p$-ranks of the sandpile group, in this section we prove that we can recover the distribution from the moments we have found. 
Heath-Brown's method \cite[Lemma 17]{Heath-Brown1994-1} is to construct an infinite matrix that lower-triangularizes the infinite matrix that gives the relevant system of linear equations equations.  Once the system is lower-triangular, it certainly has a unique solution.  The difficulty is to construct a matrix with entries that are sufficiently small (and you can prove are sufficiently small) so that all the infinite sums involved converge.  
Heath-Brown constructs his matrix with entries from Taylor expansions of analytic functions in one variable.

 Our approach is to develop a multi-variable version of Heath-Brown's method.  However, the size of our moments are on the boundary of where this approach will work, and the functions we construct in the following lemma are carefully optimized.  In particular, it is critical that only terms with 
 $d_2+\cdots + d_m \leq b_1$ appear in the Taylor expansion of $H_{m,p,b}(z)$ below.  We now construct the analytic functions of several variables whose Taylor coefficients we will use to lower-triangularize our system of equations.

\begin{lemma} \label{L:Hacts}
Given a positive integer $m$, a prime $p$, and $b\in \Z^m$ with $b_1\geq b_2\geq \dots \geq b_m$,
we have an entire analytic function in the $m$ variables $z_1,\dots,z_m$
$$
H_{m,p,b}(z)=\sum_{\substack{d_1,\dots,d_m \geq 0\\d_2+\cdots + d_m \leq b_1 }} a_{d_1,\dots,d_m} z_1^{d_1} \cdots z_m^{d_m}
$$
and a constant $E$ such that
$$
a_{d_1,\dots,d_m}\leq E p^{-b_1d_1 - \frac{d_1(d_1+1)}{2}}.
$$
Further, 
if $f$ is a partition of length $\leq m$ and  $f > b$ (in the lexicographic ordering), then  $H_{m,p,b}(p^{f_1}, p^{f_1+f_2}, \dots, p^{f_1+\dots+f_m})=0$.  If $f=b$, then 
$H_{m,p,b}(p^{f_1}, p^{f_1+f_2}, \dots, p^{f_1+\dots+f_m})\ne0.$
\end{lemma}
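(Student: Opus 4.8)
The plan is to build $H_{m,p,b}$ as a product of $m$ one-variable functions, mimicking Heath-Brown's construction but with the exponents tuned so that the total degree in the ``auxiliary'' variables $z_2,\dots,z_m$ is capped by $b_1$. First I would recall the basic one-variable building block: for a parameter $q>1$ and a nonnegative integer $N$, the function
$$
P_{q,N}(w)=\prod_{j=1}^{N}\left(1-\frac{w}{q^{j}}\right)
$$
is a polynomial of degree $N$ that vanishes exactly at $w=q,q^2,\dots,q^N$ and is nonzero at $w=q^{N+1},q^{N+2},\dots$, with the well-known coefficient bound: the coefficient of $w^d$ in $P_{q,N}(w)$ is, up to the bounded factor $\prod_{j\ge1}(1-q^{-j})^{-1}$, at most $q^{-d(d+1)/2}$ in absolute value (this is the Gaussian binomial / $q$-Pochhammer estimate). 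I would also use Heath-Brown's entire one-variable function $G_q(w)=\prod_{j\ge 1}(1-w/q^j)$, whose Taylor coefficient at $w^d$ is $\le E_0\, q^{-d(d+1)/2}$ and which vanishes at all of $q,q^2,q^3,\dots$; its reciprocal-type truncations are what give vanishing "beyond" a threshold.

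Next I would assemble the multivariable function so that evaluation at $w_k := p^{f_1+\dots+f_k}$ detects the lexicographic comparison $f$ versus $b$. The idea: use the first variable $z_1$ (with the finest control, exponent $p^{f_1}$) to carry a genuine entire factor like $G_{p}(z_1 p^{-b_1})\cdot(\text{something})$ forcing the decay $p^{-b_1 d_1 - d_1(d_1+1)/2}$, and for each subsequent $k\ge 2$ use a \emph{polynomial} factor of degree at most $b_1$ in $z_k$ whose roots are placed so that, reading the coordinates of $f$ in order, the first index $k$ where $f_k > b_k$ (given $f_1=b_1,\dots,f_{k-1}=b_{k-1}$) produces a zero, while $f=b$ hits no zero. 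Concretely one takes, in variable $z_k$, a product $\prod (1 - z_k/p^{\,b_1+\dots+b_{k-1}+j})$ over an appropriate finite range of $j$ so that the value $p^{f_1+\dots+f_k}$ is a root precisely when $f_k>b_k$ (conditioned on the earlier coordinates agreeing with $b$); bounding the range of $j$ by $b_1$ (using $b_1\ge b_2\ge\cdots\ge b_m\ge 0$, so partial sums of the $b_i$ and of the $f_i$ are controlled) is what guarantees $d_2+\cdots+d_m\le b_1$. The coefficient bounds on the auxiliary factors are then trivial — finitely many polynomials of bounded degree give a constant factor absorbed into $E$ — and only the $z_1$-factor contributes the stated $p^{-b_1 d_1 - d_1(d_1+1)/2}$ shape.

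Finally I would verify the two evaluation claims. For $f>b$ lexicographically with $\operatorname{length}(f)\le m$: let $k$ be the first index with $f_k>b_k$ (so $f_i=b_i$ for $i<k$); the $z_k$-factor was designed to vanish at the value dictated by these coordinates, so $H_{m,p,b}(\dots)=0$. For $f=b$: each factor is evaluated away from all of its (finitely many or geometric-sequence) roots, so the product is a finite product of nonzero numbers, hence nonzero. The main obstacle I anticipate is the bookkeeping in the second step — getting the root locations in the variables $z_2,\dots,z_m$ exactly right so that (i) the lexicographic test is faithfully implemented, (ii) $f=b$ genuinely avoids every root, and (iii) the degree in each auxiliary variable, and hence $d_2+\cdots+d_m$, is truly bounded by $b_1$ rather than by something like $b_1+\cdots+b_m$. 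The decay estimate itself and entireness are routine once the factorized form is fixed; it is the simultaneous satisfaction of the vanishing pattern and the degree cap (the place where, as the text warns, the moments are "just" small enough) that requires care.
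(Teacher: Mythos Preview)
Your proposal is correct and is essentially the paper's argument: one takes $H_{m,p,b}(z)=G(z_1)\cdot H(z_2,\dots,z_m)$ with the entire factor $G(z_1)=\prod_{j\ge b_1+1}(1-z_1/p^{j})$ and, for $k\ge 2$, the polynomial factor $\prod_{j=b_1+\cdots+b_k+1}^{\,b_1+\cdots+b_{k-2}+2b_{k-1}}(1-z_k/p^{j})$, whose upper limit is dictated by the partition condition $f_k\le f_{k-1}=b_{k-1}$. The one point you left implicit is the degree-cap mechanism: the $z_k$-factor has degree exactly $b_{k-1}-b_k$, so $d_2+\cdots+d_m\le\sum_{k\ge 2}(b_{k-1}-b_k)=b_1-b_m\le b_1$ by telescoping, which resolves the ``bookkeeping'' you correctly flagged as the delicate step.
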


\begin{proof}
We define analytic functions
$$
G(z_1):=\prod_{j\geq b_1+1}(1-\frac{z_1}{p^j}) =\sum_{d_1\geq 0} c_{d_1} z^{d_1}
$$
and
\begin{align*}
&H(z_2,\dots,z_m)\\
:=&
\prod_{j= b_1+b_2+1}^{2b_1}(1-\frac{z_2}{p^j}) \prod_{j= b_1+b_2+b_3+1}^{b_1+2b_2}(1-\frac{z_3}{p^j}) \cdots
\prod_{j= b_1\dots+b_m+1}^{b_1+\dots+b_{m-2}+2b_{m-1}}(1-\frac{z_m}{p^j})=
\sum_{d_2,\dots,d_m \geq 0} e_{d_1,\dots,d_m} z_2^{d_2} \cdots z_m^{d_m}.
\end{align*}
In each of the $z_i$ separately, for $2\leq i \leq m$, we have that $H$ is a polynomial of degree $b_{i-1}-b_i$.
We then have an entire, analytic function
in  $m$ variables
$$
H_{m,p,b}(z):=G(z_1)H(z_2,\dots,z_m)=\sum_{\substack{d_1,\dots,d_m \geq 0\\
d_2+\dots+d_m\leq b_1
}} a_{d_1,\dots,d_m} z_1^{d_1} \cdots z_m^{d_m}.
$$

We now estimate the size of the $a_d$.  
We see that $a_d=c_{d_1} e_{d_2,\dots d_m}$.
We have that 
$
G(pz)=(1-\frac{z}{p^{b_1}})G(z).
$
So
$
c_np^n=c_n-p^{-b_1}c_{n-1}.
$
Thus 
$
c_n= - \frac{p^{-b_1}c_{n-1}}{p^n-1}, 
$
and by induction,
$
c_n(-1)^n \frac{p^{-b_1n}}{\prod_{i=1}^n (p^i-1)}.
$
So
$
|c_n|\leq p^{-b_1n - \frac{n(n+1)}{2}}  \prod_{i\geq 1} (1-p^{-i})^{-1} .
$
Thus, 
$$
a_d\leq \frac{1}{\prod_{i\geq 1} (1-p^{-i})} p^{-b_1d_1 - \frac{d_1(d_1+1)}{2}} \max_{d_2,\dots,d_m}e_{d_2,\dots d_m}.
$$

Now we check the final statements of the lemma.
If $f>b$, suppose $f_i=b_i$ for $i\leq t$ and $f_{t+1} > b_{t+1}$ for some $0\leq t \leq m-1$.
Then, in particular $f_1+\dots+ f_i = b_1+\dots+ b_i$ for $i \leq t$, and 
$f_1+\dots+ f_{t+1} \geq b_1+\dots+ b_{t+1}+1$.
However, (when $t\geq 1$) since $f_{t+1}\leq f_t=b_t,$ we have $f_1+\dots+ f_{t+1} \leq  b_1+\dots+ b_{t-1}+2b_{t}.$
Since $H_b$ vanishes whenever $z_{t+1}=p^{k}$ for integers $k$ with $b_1+\dots+ b_{t+1}+1 \leq k
\leq b_1+\dots+ b_{t-1}+2b_{t},$ we obtain the desired vanishing.

For the last statement, we first note that since the  product in the definition of $G$ is absolutely
convergent, we have that $z_1=p^{b_1}$ is not a root of $G$.  Then we observe all the other finitely many factors in $H$ are non-zero in this case as well.

\end{proof}

In this theorem, we will use the analytic functions constructed in Lemma~\ref{L:Hacts} to construct an infinite matrix that will lower-triangularize the system of equations in \eqref{E:infsys} below.

\begin{theorem}\label{T:Momdet}
Let $p_1,\dots,p_s$ be distinct primes.  Let $m_1,\dots,m_s\geq 1 $ be integers.
Let $M_j$ be the set of partitions $\lambda$ at most $m_j$ parts.  
Let $M=\prod_{j=1}^{s} M_j$.  For $\mu\in M$, we write $\mu^j$ for its $j$th entry, which is a partition consisting of non-negative integers $\mu^j_i$ with $\mu^j_1\geq\mu^j_2\geq\dots \mu^j_{m_j}$.
 Suppose we have non-negative reals $x_\mu, y_\mu$, for each tuple of partitions $\mu\in M$.
Further suppose that we have non-negative reals $C_\lambda$ for each $\lambda\in M$ such that
$$C_\lambda\leq \prod_{j=1}^{s} {F^{m_j}p_j^{\sum_i \frac{\lambda^{j}_i(\lambda^{j}_i-1)}{2}}},$$
where $F>0$ is an absolute constant.
Suppose that 
for all $\lambda\in M$,
\begin{equation}\label{E:infsys}
\sum_{\mu\in M} x_\mu \prod_{j=1}^{s}  p_j^{\sum_i \lambda^{j}_i \mu^{j}_i} =\sum_{\mu\in M} y_\mu \prod_{j=1}^{s} p_j^{\sum_i \lambda^{j}_i \mu^{j}_i} =C_\lambda.
\end{equation}
Then for all $\mu$, we have that $x_\mu=y_\mu$.
\end{theorem}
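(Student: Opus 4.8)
The plan is a multivariable version of Heath-Brown's triangularization, taking the entire functions $H_{m,p,b}$ of Lemma~\ref{L:Hacts} as triangularizing weights. First I would pass to the differences $z_\mu:=x_\mu-y_\mu$: subtracting the two instances of \eqref{E:infsys} gives the homogeneous relations $\sum_{\mu\in M}z_\mu\prod_{j=1}^{s}p_j^{\sum_i\lambda^j_i\mu^j_i}=0$ for every $\lambda\in M$, and non-negativity of $x_\mu,y_\mu$ yields the absolute bound $\sum_{\mu\in M}|z_\mu|\prod_j p_j^{\sum_i\lambda^j_i\mu^j_i}\le 2C_\lambda\le 2\prod_j F^{m_j}p_j^{\sum_i\lambda^j_i(\lambda^j_i-1)/2}$, which is what will license the interchange of infinite sums below. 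I would then prove $z_\mu=0$ for all $\mu$ by well-founded induction along the partial order on $M$ in which $\mu\prec\nu$ means $\mu\ne\nu$ and $\mu^j\le\nu^j$ lexicographically for every $j$; this is a finite product of type-$\omega$ well-orders, hence well-founded with every element having only finitely many predecessors (for each $j$, a partition that is $\le b^j$ lexicographically and has at most $m_j$ parts has all parts $\le b^j_1$, so there are finitely many).

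For the inductive step, fix $b\in M$ and assume $z_\mu=0$ whenever $\mu\prec b$. Write $a^{(j)}_d$ for the Taylor coefficients of $H_{m_j,p_j,b^j}$. The map $(d_1,\dots,d_{m_j})\mapsto \nu$ with $\nu_i=d_i+\cdots+d_{m_j}$ is a bijection from $\Z_{\ge 0}^{m_j}$ to partitions with at most $m_j$ parts, and under it the substitution $z_i\mapsto p_j^{\mu^j_1+\cdots+\mu^j_i}$ turns the monomial $\prod_i z_i^{d_i}$ into $p_j^{\sum_i\nu_i\mu^j_i}$; thus $\sum_\nu a^{(j)}_{d(\nu)}p_j^{\sum_i\nu_i\mu^j_i}=H_{m_j,p_j,b^j}\big(p_j^{\mu^j_1},p_j^{\mu^j_1+\mu^j_2},\dots\big)$. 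Now multiply the relation $\sum_\mu z_\mu\prod_j p_j^{\sum_i\lambda^j_i\mu^j_i}=0$ by $\prod_j a^{(j)}_{d(\lambda^j)}$ and sum over all $\lambda\in M$. The left-hand side is $0$; interchanging the $\lambda$- and $\mu$-sums (justified by the absolute bound above, see the last paragraph), the inner $\lambda$-sum factors over $j$ and each coordinate's $\lambda^j$-sum collapses into the corresponding $H_{m_j,p_j,b^j}$ evaluation, giving
$$0=\sum_{\mu\in M}z_\mu\prod_{j=1}^{s}H_{m_j,p_j,b^j}\big(p_j^{\mu^j_1},p_j^{\mu^j_1+\mu^j_2},\dots\big).$$
By Lemma~\ref{L:Hacts}, the $j$-th factor is $0$ as soon as $\mu^j>b^j$ lexicographically and is a nonzero number when $\mu^j=b^j$. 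Hence only $\mu$ with $\mu^j\le b^j$ lexicographically for all $j$ survive --- a finite set --- the coefficient of $z_b$ is the nonzero number $\prod_j H_{m_j,p_j,b^j}(p_j^{b^j_1},\dots)$, and every other surviving $\mu$ satisfies $\mu\prec b$, hence $z_\mu=0$ by induction. Therefore $z_b=0$. (The base point $b=(\varnothing,\dots,\varnothing)$ is the same argument with a singleton box.)

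The step I expect to be the main obstacle --- and the place where the precise hypotheses matter --- is verifying the absolute convergence $\sum_\lambda\sum_\mu\big|\prod_j a^{(j)}_{d(\lambda^j)}\big|\,|z_\mu|\prod_j p_j^{\sum_i\lambda^j_i\mu^j_i}<\infty$ needed for the interchange. Summing over $\mu$ first and inserting the bound on $C_\lambda$ factors this as $2\prod_j\big(F^{m_j}\sum_\nu |a^{(j)}_{d(\nu)}|\,p_j^{\sum_i\nu_i(\nu_i-1)/2}\big)$, so I only need each inner sum to converge. Here Lemma~\ref{L:Hacts} is engineered exactly for this: the support condition $d_2+\cdots+d_{m_j}\le b^j_1$ confines $\nu_2,\dots,\nu_{m_j}$ to a bounded range --- so $\sum_{i\ge 2}\nu_i(\nu_i-1)/2=O(1)$ and only finitely many $(d_2,\dots,d_{m_j})$ occur --- while $\nu_1=d_1+\nu_2$; then the coefficient bound $|a^{(j)}_{d(\nu)}|\le E\,p_j^{-b^j_1 d_1-d_1(d_1+1)/2}$ against $p_j^{\nu_1(\nu_1-1)/2}$ produces a net exponent of the form $(\nu_2-b^j_1-1)d_1+O(1)\le -d_1+O(1)$, which is summable in $d_1$. (Had $C_\lambda$ been allowed to be as large as $\prod_j p_j^{\sum_i\lambda^j_i{}^2/2}$ the net exponent would only be $O(1)$ and the argument would break --- this is why the statement requires the $\lambda^j_i(\lambda^j_i-1)/2$ bound, i.e.\ $|\wedge^2|$-sized moments.) Tracking these bounds through completes the proof.
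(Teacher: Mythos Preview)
Your proof is correct and follows essentially the same route as the paper's: use the Taylor coefficients of the $H_{m_j,p_j,b^j}$ from Lemma~\ref{L:Hacts} as weights, verify absolute convergence via the support constraint $d_2+\cdots+d_{m_j}\le b^j_1$ together with the coefficient bound (yielding the net exponent $d_1(\nu_2-b^j_1-1)+O(1)\le -d_1+O(1)$, exactly as the paper computes), and then triangularize. The only cosmetic differences are that you work with the differences $z_\mu=x_\mu-y_\mu$ and induct along the coordinatewise-lexicographic partial order (which makes the surviving sum manifestly finite), whereas the paper shows directly that each $x_\nu$ is determined by the $C_\lambda$ and inducts on the total lexicographic order on tuples.
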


\begin{proof}
 We will induct on the size of $\mu$ in the lexicographic total ordering (we take the lexicographic ordering for partitions and then the lexicographic ordering on top of that for tuples of partitions).  Suppose we have $x_\pi=y_\pi$ for every $\pi<\nu$.

We use Lemma~\ref{L:Hacts} to find $H_{m_j,p_j,\nu^j}(z)=\sum_d a(j)_d z_1^{d_1}\dots z_{m_j}^{d_{m_j}}.$
For $\lambda\in M$, we define
$$
A_\lambda:=\prod_{j=1}^{s} a(j)_{\lambda^j_1-\lambda^j_2,\lambda^j_2-\lambda^j_3,\dots, \lambda^j_{m_j}}.
$$ 
We wish to show that the sum
$
\sum_{\lambda\in M} A_\lambda C_\lambda
$
converges absolutely.
We have
\begin{align*}
 \sum_{\lambda\in M} |A_\lambda C_\lambda| 
&\leq\sum_{\lambda\in M} \prod_{j=1}^{s} \left| a(j)_{\lambda^j_1-\lambda^j_2,\lambda^j_2-\lambda^j_3,\dots, \lambda^j_{m_j}}{F^{m_j}p_j^{\sum_i \frac{\lambda^{j}_i(\lambda^{j}_i-1)}{2}}}  \right|\\
&= \prod_{j=1}^{s} \sum_{\lambda\in M_j}  \left|a(j)_{\lambda_1-\lambda_2,\lambda_2-\lambda_3,\dots, \lambda_{m_j}}{F^{m_j}p_j^{\sum_i \frac{\lambda_i(\lambda_i-1)}{2}}}  \right|.
\end{align*}

We now investigate the inner sum.  We drop the $j$ index, and let $b=\nu^j$.  We apply Lemma~\ref{L:Hacts} to obtain
\begin{align*}
&\sum_{\substack{d_1,\dots,d_m \geq 0\\
d_2+\dots+d_m \leq b_1
}} |a(j)_{d_1,d_2,\dots, d_m}| F^{m}p^{\sum_i \frac{\sum_{k=i}^{m}d_k (\sum_{k=i}^{m}d_k-1)}{2}}  
\leq \sum_{\substack{d_1,\dots,d_m \geq 0\\
d_2+\dots+d_m \leq b_1
}} E p^{-b_1 d_1 -\frac{d_1(d_1+1)}{2}} F^{m}p^{\sum_i \frac{\sum_{k=i}^{m}d_k (\sum_{k=i}^{m}d_k-1)}{2} }  .
%
\end{align*}
For each choice of $d_2,\dots d_m$, the remaining sum over $d_1$ is a constant times
$
\sum_{d_1\geq 0} p^{d_1(-b_1-1+d_2+\dots+d_m)},
$
which converges, so it follows that $\sum_{\lambda\in M} A_\lambda C_\lambda$ converges absolutely.

Suppose we have $x_\mu$ for $\mu \in M$ all non-negative, such that for all $\lambda\in M$,
$$
\sum_{\mu\in M} x_\mu \prod_{j=1}^{s}  p_j^{\sum_i \lambda^{j}_i \mu^{j}_i}=C_\lambda.
$$
So we have that
$$
\sum_{\lambda\in M} \sum_{\mu\in M} A_\lambda  x_\mu \prod_{j=1}^{s}  p_j^{\sum_i \lambda^{j}_i \mu^{j}_i}
$$
converges absolutely.
Thus,
\begin{align*}
 \sum_{\lambda\in M} A_\lambda C_\lambda &=\sum_{\lambda\in M} \sum_{\mu\in M} A_\lambda  x_\mu \prod_{j=1}^{s}  p_j^{\sum_i \lambda^{j}_i \mu^{j}_i}\\
&=\sum_{\mu\in M} x_\mu \sum_{\lambda\in M}  A_\lambda  \prod_{j=1}^{s}  p_j^{\sum_i \lambda^{j}_i \mu^{j}_i}\\
&=\sum_{\mu\in M} x_\mu  \prod_{j=1}^{s}  \sum_{\lambda\in M_j}  
a(j)_{\lambda_1-\lambda_2,\lambda_2-\lambda_3,\dots, \lambda_{m_j}}
  p_j^{\sum_i \lambda_i \mu^{j}_i}\\
\end{align*}
Now we consider the inner sum.  Again we drop the $j$ indices.  
We have
\begin{align*}
\sum_{\lambda\in M_j}  
a(j)_{\lambda_1-\lambda_2,\lambda_2-\lambda_3,\dots, \lambda_{m}}
  p^{\sum_i \lambda_i \mu_i}&=
  \sum_{d_1,\dots,d_{m}\geq 0}  a(j)_{d_1,\dots,d_m}  (p^{\mu_1})^{d_1}(p^{\mu_1+\mu_2})^{d_2} \cdots (p^{\mu_1+\dots+\mu_m})^{d_m}
  \\
&= H_{m,p,\nu}(p^{\mu_1}, p^{\mu_1+\mu_2}, \dots, p^{\mu_1+\dots+\mu_m}).
\end{align*}
If $\mu > \nu$ (in the lexicographic total ordering), then 
some $\mu^j>\nu^j$ and so for $m=m_j$ and $p=p_j$,
by Lemma~\ref{L:Hacts}, 
 $H_{m,p,\nu^j}(p^{\mu_1}, p^{\mu_1+\mu_2}, \dots, p^{\mu_1+\dots+\mu_m})=0$.  Further, if $\mu=\nu$, then for each (implicit) $j$ we have 
$H_{m,p,\nu}(p^{\mu_1}, p^{\mu_1+\mu_2}, \dots, p^{\mu_1+\dots+\mu_m})\ne0.$
So for some non-zero $u$,
$$
 \sum_{\lambda\in M} A_\lambda C_\lambda  =  x_\nu u 
+ \sum_{\mu\in M, \mu < \nu } x_\mu \sum_{\lambda\in M}  A_\lambda  \prod_{j=1}^{s}  p_j^{\sum_i \lambda^{j}_i \mu^{j}_i}.
$$
So since by assumption $x_\mu$ with $\mu<\nu$ we determined by the $C_\lambda$, we conclude that
$x_\nu$ is determined as well.

\end{proof}

In the following theorem we achieve two things.  We translate solving the above studied system of linear equations into finding the distribution of our random groups given their moments.  We also deal with the issue that we don't technically have moments of a distribution, but rather limits of moments of a sequence of distributions.  An important ingredient in solving this issue is showing that in our case, for any particular equation, we can use bounds coming from other equations to show that we satisfy the hypotheses of the Lebesgue Dominated Convergence Theorem.

\begin{theorem}\label{T:MomDetDetail}
Let $X_n$ be a sequence of random variables taking values in finitely generated abelian groups.
Let $a$ be a positive integer and $A$ be the set of (isomorphism classes of) abelian groups with exponent dividing $a$.
Suppose that for every $G\in A$, we have
$$
\lim_{n\ra \infty} \E(\# \Sur(X_n, G)) = |\wedge^2 G|. 
$$
Then for every $H\in A$,
 the limit
$
\lim_{n\ra\infty} \P(X_n\tensor \Z/a\Z \isom H)
$
exists, and for all $G\in A$ we have
$$
\sum_{H\in A} \lim_{n\ra\infty} \P(X_n\tensor \Z/a\Z \isom H) \#\Sur(H,G)=|\wedge^2 G|.
$$

 Suppose $Y_n$ is a sequence of random variables taking values in finitely generated abelian groups
 such that for every $G\in A$, we have
$$
\lim_{n\ra \infty} \E(\# \Sur(Y_n, G)) = |\wedge^2 G|. 
$$
Then, we have that for every every $H\in A$
$$
\lim_{n\ra\infty} \P(X_n\tensor \Z/a\Z \isom H) =\lim_{n\ra\infty} \P(Y_n\tensor \Z/a\Z \isom H).
$$
\end{theorem}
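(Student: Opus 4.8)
The plan is to recast the hypothesis in terms of ``Hom'' moments, use those to determine a limiting distribution via the uniqueness statement of Theorem~\ref{T:Momdet}, and then translate back. Since every homomorphism factors through its image and $G$ has finitely many subgroups, $\#\Hom(H,G)=\sum_{G_1\leq G}\#\Sur(H,G_1)$, so the hypothesis gives, for every $G\in A$,
\[
\lim_{n\to\infty}\E(\#\Hom(X_n,G))=\sum_{G_1\leq G}|\wedge^2 G_1|=:C_G .
\]
Writing $P_n(H):=\P(X_n\tensor\Z/a\Z\isom H)$, a probability distribution on the countable set $A$, this reads $\sum_{H\in A}P_n(H)\,\#\Hom(H,G)=\E(\#\Hom(X_n,G))$, since $\#\Hom(X_n,G)=\#\Hom(X_n\tensor\Z/a\Z,G)$.

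The crux is to interchange $\lim_n$ with the sum over $H$. Fix $G\in A$, write $a=\prod_{j=1}^{s}p_j^{m_j}$, and set $G':=G\oplus\Z/a\Z\in A$. For $H\in A$ one has $\#\Hom(H,\Z/a\Z)=|H|$, hence $\#\Hom(H,G')=|H|\,\#\Hom(H,G)$; and since $\E(\#\Hom(X_n,G'))$ converges in $n$, it is bounded by some $B_{G'}<\infty$ for all large $n$. As each term of a sum of non-negative reals is at most the whole sum, $P_n(H)\,\#\Hom(H,G')\leq B_{G'}$, so $P_n(H)\,\#\Hom(H,G)\leq B_{G'}/|H|$, and $\sum_{H\in A}|H|^{-1}=\prod_{j}\prod_{k=1}^{m_j}(1-p_j^{-k})^{-1}<\infty$. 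Thus, extracting by a diagonal argument a subsequence $(n_k)$ with $P_{n_k}(H)\to P_\infty(H)$ for all $H$ (possible since $A$ is countable and $0\leq P_n(H)\leq1$), dominated convergence for series yields $\sum_{H}P_\infty(H)\,\#\Hom(H,G)=\lim_k\E(\#\Hom(X_{n_k},G))=C_G$ for every $G\in A$.

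Now apply uniqueness: indexing $A$ by transpose partitions, Lemma~\ref{L:Hom} gives $\#\Hom(H,G)=\prod_j p_j^{\sum_i\lambda^j_i\mu^j_i}$, and Lemma~\ref{L:BoundMom} (its constant $F$ is $\geq1$, and the relevant partitions have largest part $\leq m_j$) shows the $C_\lambda$ obey the growth bound demanded by Theorem~\ref{T:Momdet}. Hence \eqref{E:infsys} has at most one non-negative solution, so $P_\infty$ is independent of the chosen subsequence and $\lim_n P_n(H)=P_\infty(H)$ exists for all $H$. To obtain the $\Sur$-identity, set $D_{G_1}:=\sum_H P_\infty(H)\,\#\Sur(H,G_1)\leq C_{G_1}<\infty$; then $\sum_{G_1\leq G}D_{G_1}=\sum_{H}P_\infty(H)\,\#\Hom(H,G)=C_G=\sum_{G_1\leq G}|\wedge^2 G_1|$, and induction on $|G|$ (M\"obius inversion over the subgroup lattice) gives $D_G=|\wedge^2 G|$. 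Running the same argument for $Y_n$ produces a limiting distribution $Q_\infty$ satisfying \eqref{E:infsys} with the same $C_G$, so by uniqueness $Q_\infty=P_\infty$ and $\lim_n\P(X_n\tensor\Z/a\Z\isom H)=\lim_n\P(Y_n\tensor\Z/a\Z\isom H)$ for every $H$.

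The main obstacle is the interchange in the second paragraph: the moments are available only as limits, not as moments of a fixed distribution, and they grow too fast for Carleman's criterion. The first issue is resolved by the uniform domination of $P_n(H)\,\#\Hom(H,G)$ via the moment of the single group $G'=G\oplus\Z/a\Z$ — a convergent sequence of moments being automatically bounded — and the second by the finely tuned analytic functions of Lemma~\ref{L:Hacts} underlying Theorem~\ref{T:Momdet}.
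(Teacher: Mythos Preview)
Your proof is correct and follows essentially the same route as the paper: convert to Hom moments, dominate $P_n(H)\,\#\Hom(H,G)$ by a summable sequence using the moment of a larger group $G'$, apply dominated convergence along subsequences, and invoke Theorem~\ref{T:Momdet} (via Lemmas~\ref{L:Hom} and~\ref{L:BoundMom}) to force all subsequential limits to coincide. The one noteworthy difference is your choice of dominating group: the paper takes $G'$ with transpose partition $\pi_i'=2\lambda_i'+1$, whereas your choice $G'=G\oplus\Z/a\Z$ gives the cleaner bound $P_n(H)\,\#\Hom(H,G)\leq B_{G'}/|H|$ with $\sum_{H\in A}|H|^{-1}=\prod_j\prod_{k=1}^{m_j}(1-p_j^{-k})^{-1}<\infty$, which is a nice simplification.
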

\begin{proof}
First, we will suppose that the limits
$
\lim_{n\ra\infty} \P(X_n\tensor \Z/a\Z \isom H)
$
exist, and from that show that
$$
\sum_{H\in A} \lim_{n\ra\infty} \P(X_n\tensor \Z/a\Z \isom H) \#\Sur(H,G)=|\wedge^2 G|.
$$

For each $G \in M$, we claim we can find an abelian group $G'\in M$ such that
$$
\sum_{H\in A} \frac{\#\Hom(H,G)}{\#\Hom(H,G')}
$$
converges.  We can factor over the primes $p$ dividing $a$, and reduce to the problem when $a=p^e$.  
Then if $G$ has type $\lambda$, we take $G'$ 
of type $\pi$ with $\pi_i'=2\lambda_i'+1$ for $1\leq i \leq e$.
Then we see
$$
\sum_{c_1\geq  \dots \geq c_e \geq 0} p^{\sum_{i=1}^e c_i(\lambda_i'-2\lambda_i'-1)}
=\sum_{c_1\geq  \dots \geq c_e \geq 0} p^{\sum_{i=1}^e c_i(-\lambda_i'-1)}
$$
converges.

There is some constant $D_G$ such that for all $n$ we have
$$
\P(X_n\tensor \Z/a\Z \isom H) \#\Hom(H,G') \leq \sum_{H\in A} \P(X_n\tensor \Z/a\Z \isom H) \#\Hom(H,G') \leq 
D_G.
$$
Thus, for all $n$,
$$
\P(X_n\tensor \Z/a\Z \isom H) \#\Hom(H,G)\leq 
D_G\#\Hom(H,G) \#\Hom(H,G')^{-1} .
$$
Since
$
\sum_{H\in A } D_G \#\Hom(H,G)\#\Hom(H,G')^{-1} 
$
converges, by the Lebesgue Dominated Convergence Theorem we have
$$
\sum_{H\in A } \lim_{n\ra\infty } \P(X_n\tensor \Z/a\Z \isom H) \#\Hom(H,G)
= \lim_{n\ra\infty } \sum_{H\in A } \P(X_n\tensor \Z/a\Z \isom H) \#\Hom(H,G).
$$
As this holds for every $G\in A$, we also have (by a finite number of additions and subtractions)
\begin{align*}
\sum_{H\in A } \lim_{n\ra\infty } \P(X_n\tensor \Z/a\Z \isom H) \#\Sur(H,G)
&= \lim_{n\ra\infty } \sum_{H\in A } \P(X_n\tensor \Z/a\Z \isom H) \#\Sur(H,G)\\
&= |\wedge^2 G|.
\end{align*}

Next, we show that if for every $G\in A$, 
\begin{align*}
\sum_{H\in A } \lim_{n\ra\infty } \P(X_n\tensor \Z/a\Z \isom H) \#\Sur(H,G)
&= \sum_{H\in A } \lim_{n\ra\infty } \P(Y_n\tensor \Z/a\Z \isom H) \#\Sur(H,G)\\
&= |\wedge^2 G|,
\end{align*}
then we have for every $H\in A$ that
$
\lim_{n\ra\infty } \P(X_n\tensor \Z/a\Z \isom H)=\lim_{n\ra\infty } \P(Y_n\tensor \Z/a\Z \isom H).
$
For each $G$, by a finite number of additions we have 
\begin{align*}
\sum_{H\in A } \lim_{n\ra\infty } \P(X_n\tensor \Z/a\Z \isom H) \#\Hom(H,G)
&= \sum_{H\in A } \lim_{n\ra\infty } \P(Y_n\tensor \Z/a\Z \isom H) \#\Hom(H,G)\\&=\sum_{G_1 \textrm{ subgroup of } G } |\wedge^2 G_1|.
\end{align*}
Now we will explain how to apply Theorem~\ref{T:Momdet} to conclude that 
$
\lim_{n\ra\infty } \P(X_n\tensor \Z/a\Z \isom H)=\lim_{n\ra\infty } \P(Y_n\tensor \Z/a\Z \isom H).
$
We factor $a=\prod_{j=1}^s p_j^{m_j}$.  The partition $\lambda^j\in M_j$ is the transpose of the type of the Sylow $p_j$-subgroup of $H$, which gives a bijection between $M$ and $A$.
We have that for $G\in A$ with corresponding $\lambda\in M$,
$$
C_\lambda=\sum_{G_1 \textrm{ subgroup of } G } |\wedge^2 G_1|\leq \prod_{j=1}^s {F^{m_j}p_j^{\sum_i \frac{\lambda^{j}_i(\lambda^{j}_i-1)}{2}}}.
$$
by Lemma~\ref{L:BoundMom}.  
For $H,G\in A$ with corresponding $\mu,\lambda\in M$, we have
$
\# \Hom(H,G) = \prod_{j=1}^s p_j^{\sum_i \lambda_i^j \mu_i^j }.
$
So for $H\in A$ with corresponding $\mu\in M$, we let
$
x_\mu:= \lim_{n\ra\infty } \P(X_n\tensor \Z/a\Z \isom H)
$
and similarly for $y_\mu$ and we can apply Theorem~\ref{T:Momdet}.

Now, we suppose for the sake of contradiction that the limit
$
\lim_{n\ra\infty} \P(X_n\tensor \Z/a\Z \isom H)
$
does not exist for at least some $H\in M$.  Then we can use a diagonal argument to find a subsequence
of $X_n$ where the limits do exist for all $H\in M$, and then another subsequence where the limits
do also exist for all $H \in M$, but at least one is different.
But since in each subsequence the limits
$
\lim_{n\ra\infty} \P(X_{i_n}\tensor \Z/a\Z \isom H)
$
exist, so we can use the above the conclude that these limits have to be the same for both subsequences, a contradiction.

\end{proof}

\section{Comparison to uniform random matrices}\label{S:Haar}

Above we have seen that the moments we have determined for sandpile groups of random graphs in particular imply many well-defined asymptotic statistics of the sandpile groups, but we have not yet determined the values of these statistics.  From Theorem~\ref{T:MomMat} we see these same moments hold for cokernels of a wide class of random matrices, in particular uniform random matrices over $\Z/a\Z$.
(Though the moments for cokernels of uniform random matrices follow from Theorem~\ref{T:MomMat},  there is a much simpler proof for the uniform case given in \cite{Clancy2014}.)
%
We can then use computations in the  uniform case to give us our desired statistics.
\begin{corollary}[of Theorems~\ref{T:Mom}, \ref{T:MomMat}, and \ref{T:MomDetDetail}] \label{C:first}
Let $G$ be a finite abelian group of exponent dividing $a$. Let $\Gamma\in G(n,q)$ be a random graph with sandpile group $S$. Let $H_n$ be a uniform random $n \times n$ symmetric matrix with entries in $\Z/a\Z$.
$$
\lim_{n\ra\infty} \P(S\tensor \Z/a\Z \isom G) =\lim_{n\ra\infty} \P(\cok(H_n) \isom G).
$$
\end{corollary}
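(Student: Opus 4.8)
The plan is to obtain this equality of limiting distributions purely formally from the moment computations already established, together with the fact (Theorem~\ref{T:MomDetDetail}) that these moments determine a unique limiting distribution. The only real content is to check that the two sequences of group-valued random variables both have the same limiting $G$-moments for every $G$ of exponent dividing $a$, after which Theorem~\ref{T:MomDetDetail} applies verbatim.

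First I would set $X_n:=S$, the sandpile group of $\Gamma\in G(n,q)$. Since $G$ has exponent dividing $a$, $\#\Sur(S,G)=\#\Sur(S\tensor\Z/a\Z,G)$, and Theorem~\ref{T:MomGraphs} (equivalently Theorem~\ref{T:Mom}) gives $\lim_{n\ra\infty}\E(\#\Sur(X_n,G))=|\wedge^2 G|$ for every finite abelian $G$, in particular for every $G$ of exponent dividing $a$. Next I would set $Y_n:=\cok(H_n)$. To compute its moments I would lift $H_n$ to a random symmetric $n\times n$ matrix $\tilde H_n$ over $\Z$ whose entries $(\tilde H_n)_{ij}$, $i\leq j$, are independent and uniform on $\{0,1,\dots,a-1\}$; then $\cok(\tilde H_n)\tensor\Z/a\Z=\cok(\tilde H_n\bmod a)=\cok(H_n)$, so $\#\Sur(\cok(\tilde H_n),G)=\#\Sur(\cok(H_n),G)$ whenever $G$ has exponent dividing $a$. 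For any prime $p\mid a$ (hence any prime dividing $|G|$) and any $t\in\Z/p\Z$ we have $\P((\tilde H_n)_{ij}\equiv t\pmod p)=1/p\leq 1/2$, so the hypotheses of Theorem~\ref{T:MomMat} hold with $\alpha=1/2$, and we conclude $\lim_{n\ra\infty}\E(\#\Sur(Y_n,G))=|\wedge^2 G|$ for every such $G$. (Alternatively one may cite the direct computation of these moments for uniform random symmetric matrices over $\Z/a\Z$ in \cite{Clancy2014}.)

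Finally I would apply Theorem~\ref{T:MomDetDetail} to the sequences $X_n$ and $Y_n$: it yields that $\lim_{n\ra\infty}\P(X_n\tensor\Z/a\Z\isom H)$ and $\lim_{n\ra\infty}\P(Y_n\tensor\Z/a\Z\isom H)$ both exist and are equal for every abelian $H$ of exponent dividing $a$. Since $Y_n=\cok(H_n)$ is already a $\Z/a\Z$-module, $Y_n\tensor\Z/a\Z=Y_n$, so taking $H=G$ gives $\lim_{n\ra\infty}\P(S\tensor\Z/a\Z\isom G)=\lim_{n\ra\infty}\P(\cok(H_n)\isom G)$, as desired. There is no serious obstacle: all the difficulty was front-loaded into Theorems~\ref{T:Mom}, \ref{T:MomMat}, and \ref{T:MomDetDetail}, and the only points needing care are the compatibility of $-\tensor\Z/a\Z$ with cokernels and with counting surjections onto $G$, plus the trivial verification that the uniform (or uniformly lifted) matrix ensemble meets the anticoncentration hypothesis of Theorem~\ref{T:MomMat}.
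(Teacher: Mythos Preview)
Your proposal is correct and is exactly the argument the paper has in mind: the corollary is stated without proof precisely because it follows immediately by feeding the moment computations of Theorems~\ref{T:Mom} and~\ref{T:MomMat} (the latter applied to a lift of the uniform $\Z/a\Z$ matrix, or via \cite{Clancy2014}) into Theorem~\ref{T:MomDetDetail}. Your handling of the minor bookkeeping---the compatibility of $\otimes\,\Z/a\Z$ with cokernels and surjection counts, and the verification of the anticoncentration hypothesis for the uniform lift---is all fine.
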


In particular, we can conclude the following, which proves Theorem~\ref{T:Main}.
\begin{corollary}\label{C:Main}
Let $G$ be a finite abelian group. Let $\Gamma\in G(n,q)$ be a random graph with sandpile group $S$.
Let $P$ be a finite set of primes including all those dividing $|G|$.
Let $H_n$ be a random $n \times n$ symmetric matrix with entries in $\prod_{p\in P}\Z_p$ with respect to Haar measure.
Let $S_P$ be the sum of the Sylow $p$-subgroups of $S$ for $p\in P$.
  Then
\begin{align*}
\lim_{n\ra\infty} \P(S_P\isom G) &=\lim_{n\ra\infty} \P(\cok(H_n) \isom G)\\
&= \frac{\#\{\textrm{symmetric, bilinear, perfect $G\times G\ra \C^*$} \}}{|G||\Aut(G)|}
\prod_{p\in P}
\prod_{k\geq 0}(1-p^{-2k-1}).
\end{align*}
\end{corollary}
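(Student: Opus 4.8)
The plan is to deduce the corollary from Corollary~\ref{C:first} together with the computation of the distribution of cokernels of uniform random symmetric matrices over $\Z/a\Z$ carried out in \cite{Clancy2014}; the only genuine work is the passage from the finite quotients $S\tensor\Z/a\Z$ (and from the finite rings $\Z/a\Z$) to $S_P$ (and to $\prod_{p\in P}\Z_p$).

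First I would fix, for each integer $m\geq 1$, the modulus $a_m:=\prod_{p\in P}p^m$, chosen large enough that the exponent of $G$ divides $a_m$. The elementary observation is that $S_P\isom G$ holds if and only if $S\tensor\Z/a_m\Z\isom G$ \emph{and} the Sylow $p$-subgroup $S_p$ of $S$ has exponent at most $p^m$ for every $p\in P$: tensoring with $\Z/a_m\Z$ kills the Sylow $\ell$-subgroups for $\ell\notin P$ and truncates each $S_p$ (for $p\in P$) to exponent $p^m$, so the two right-hand conditions say precisely that this operation does nothing beyond discarding the primes outside $P$. Hence
\[
\P(S\tensor\Z/a_m\Z\isom G)-\sum_{p\in P}\P\big(\exp(S_p)\geq p^{m+1}\big)\ \leq\ \P(S_P\isom G)\ \leq\ \P(S\tensor\Z/a_m\Z\isom G).
\]

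Next I would prove the tail estimate $\limsup_{n\to\infty}\P\big(\exp(S_p)\geq p^{m+1}\big)\leq p^{-m}$. If $S_p$ has exponent $\geq p^{m+1}$ then $S$ admits a surjection onto $\Z/p^{m+1}\Z$, and composing one such surjection with the $p^m(p-1)$ automorphisms of $\Z/p^{m+1}\Z$ produces $p^m(p-1)\geq p^m$ distinct surjections; so by Markov's inequality $\P\big(\exp(S_p)\geq p^{m+1}\big)\leq p^{-m}\,\E(\#\Sur(S,\Z/p^{m+1}\Z))$, and Theorem~\ref{T:MomGraphs} gives $\E(\#\Sur(S,\Z/p^{m+1}\Z))\to|\wedge^2(\Z/p^{m+1}\Z)|=1$. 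Inserting this into the squeeze above and letting first $n\to\infty$ (the limit $\lim_n\P(S\tensor\Z/a_m\Z\isom G)$ exists by Theorem~\ref{T:MomDetDetail}, or by Corollary~\ref{C:first}) and then $m\to\infty$, one concludes that $\lim_n\P(S_P\isom G)$ exists and equals $\lim_{m\to\infty}\lim_n\P(S\tensor\Z/a_m\Z\isom G)$. The identical bookkeeping is then run on the matrix side: Corollary~\ref{C:first} identifies $\lim_n\P(S\tensor\Z/a_m\Z\isom G)$ with $\lim_n\P(\cok(H_n^{(m)})\isom G)$ for $H_n^{(m)}$ a uniform random symmetric matrix over $\Z/a_m\Z$; reducing a Haar-random symmetric matrix over $\prod_{p\in P}\Z_p$ modulo $a_m$ produces exactly $H_n^{(m)}$, its cokernel is almost surely finite, and the analogous truncation identity plus the analogous Markov-plus-moments tail bound (the relevant $\Sur$-moments being again $|\wedge^2(\Z/p^{m+1}\Z)|=1$, as computed in \cite{Clancy2014}) gives $\lim_n\P(\cok(H_n)\isom G)=\lim_{m\to\infty}\lim_n\P(\cok(H_n^{(m)})\isom G)$. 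Comparing the two sides yields $\lim_n\P(S_P\isom G)=\lim_n\P(\cok(H_n)\isom G)$.

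Finally, since Haar measure on $\prod_{p\in P}\Z_p$ is the product of the $p$-adic Haar measures and $\cok(H_n)$ is the direct sum over $p\in P$ of the independent cokernels of the $\Z_p$-reductions, the single-prime computation of \cite{Clancy2014} identifies this common limit with $\prod_{p\in P}\frac{\#\{\text{symmetric, bilinear, perfect }G_p\times G_p\to\C^*\}}{|G_p||\Aut(G_p)|}\prod_{k\geq0}(1-p^{-2k-1})$, and multiplicativity over primes of the numerator, of $|G|$ and of $|\Aut(G)|$ rewrites this as the asserted formula (the factors for $p\in P$ not dividing $|G|$ contributing only $\prod_{k\geq0}(1-p^{-2k-1})$). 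The step I expect to require the most care is making the interchange of the limits in $n$ and $m$ rigorous on both the graph side and the matrix side; everything else is formal once the tail bound is in place, and that tail bound is cheap precisely because the moments $\E(\#\Sur(-,\Z/p^{m+1}\Z))$ tend to $1$ while Markov's inequality costs only the harmless factor $p^{-m}$.
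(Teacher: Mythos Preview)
Your argument is correct, but it works harder than necessary. The paper observes that a \emph{single} well-chosen modulus $a$ already does the job: if $G$ has exponent $\prod_{p\in P}p^{e_p}$ and one sets $a=\prod_{p\in P}p^{e_p+1}$, then for any finitely generated abelian group $H$ one has $H\tensor\Z/a\Z\isom G$ if and only if $H_P\isom G$. The point is that if $(H\tensor\Z/a\Z)_p\isom G_p$ has exponent $p^{e_p}<p^{e_p+1}$, then the truncation at level $e_p+1$ did not actually truncate anything, forcing $H_p\isom G_p$; and the same reasoning rules out any free part of $\cok(H_n)$ over $\Z_p$. Thus the first equality follows immediately from Corollary~\ref{C:first} applied with this fixed $a$, with no squeeze, no tail bound, and no limit in $m$.

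Your route---squeezing between $\P(S\tensor\Z/a_m\Z\isom G)$ and the same minus a Markov tail, then sending $m\to\infty$---is a valid alternative and is the kind of argument one would need if the target group $G$ were replaced by a condition that does not pin down the exponent (for instance, bounding the probability that $S_P$ has $p$-rank at least $r$). But for a fixed $G$ the exponent is fixed, and the paper's one-line observation short-circuits the entire limiting procedure. In particular, the step you flagged as most delicate (interchanging limits in $n$ and $m$) simply does not arise in the paper's proof. The evaluation of the second equality via \cite{Clancy2014} and orbit--stabilizer is the same in both approaches.
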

\begin{proof}
Note that if $G$ is a finite abelian group with exponent that has prime factorization $\prod_{p\in P} p^{e_p}$, then if we take $a=\prod_{p\in P} p^{e_p+1}$, for any finitely generated abelian group $H$,
with $H_P$  the sum of the Sylow $p$-subgroups of $H$ for $p\in P$, we have
$$
H\tensor \Z/a\Z \isom G\textrm{ if and only if } H_P \isom G.
$$
So the first equality follows from Corollary~\ref{C:first}.  

For the second equality, note that everything factors over $p\in P$, and so we can reduce to the case when $G$ is a $p$-group.
Let $\Phi_G$ be the set of symmetric, bilinear, perfect pairings $G\times G\ra \C^*$. 
For $\phi\in\Phi_G$, we let
$\Aut(G,\phi)$ be the set of automorphisms of $G$ that respect the pairing $\phi$.
Then $\Aut(G)$ acts naturally on $\Phi_G$, with orbits the isomorphism classes of 
symmetric, bilinear, perfect pairings $G\times G\ra \C^*$, and stabilizers 
$\Aut(G,\phi)$ for $\phi$ in the isomorphism class.  Let $\bar{\Phi}_G$ be the set of isomorphism classes of 
symmetric, bilinear, perfect pairings $G\times G\ra \C^*$.

Then \cite[Theorem 2]{Clancy2014} shows that
$$
\lim_{n\ra\infty} \P(\cok(H_n) \isom G)=\sum_{[\phi]\in \bar{\Phi}_G }\frac{1}{|G||\Aut(G,\phi)|} \prod_{k\geq 0}(1-p^{-2k-1}).
$$
By the orbit-stabilizer theorem, we have
$$
\sum_{[\phi]\in \bar{\Phi}_G}\frac{1}{\#\Aut(G,\phi)} =\sum_{\phi\in \Phi_G} \frac{1}{|\Aut(G)|}.
$$  We conclude the second equality of the corollary.
\end{proof}

In particular, this lets us see that any particular group appears asymptotically with probability $0$.

\begin{corollary}\label{C:Zero}
Let $G$ be a finite abelian group.  Let $\Gamma\in G(n,q)$ be a random graph with sandpile group $S$.
Then
$$
\lim_{n\ra \infty} \P(S\isom G)=0.
$$
\end{corollary}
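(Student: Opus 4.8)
The plan is to deduce this directly from Corollary~\ref{C:Main}. Fix a finite set $P$ of primes containing every prime dividing $|G|$, and let $S_P$ be the sum of the Sylow $p$-subgroups of $S$ for $p\in P$. The first step is the elementary observation that $\{S\isom G\}\subseteq\{S_P\isom G\}$: indeed, if $S\isom G$ then $S$ is finite, its Sylow $p$-subgroups for $p\notin P$ are trivial (as every prime dividing $|G|$ lies in $P$), and hence $S_P\isom S\isom G$. (This also covers the disconnected case automatically, since then $S$ is infinite and $\{S\isom G\}$ is empty.) Therefore $\P(S\isom G)\leq\P(S_P\isom G)$ for all $n$, and passing to the limit superior while invoking Corollary~\ref{C:Main} gives
$$
\limsup_{n\ra\infty}\P(S\isom G)\;\leq\;\lim_{n\ra\infty}\P(S_P\isom G)\;=\;\frac{\#\{\textrm{symmetric, bilinear, perfect }G\times G\ra\C^*\}}{|G||\Aut(G)|}\prod_{p\in P}\prod_{k\geq 0}(1-p^{-2k-1}).
$$

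The second step is to let $P$ exhaust the set of all primes. For each prime $p$ the $k=0$ term of $\prod_{k\geq 0}(1-p^{-2k-1})$ equals $1-p^{-1}$ and all other terms are positive and at most $1$, so $\prod_{k\geq 0}(1-p^{-2k-1})\leq 1-p^{-1}$; since $\sum_p p^{-1}$ diverges, the Euler-type product $\prod_p(1-p^{-1})$ is $0$. Hence the infimum over all admissible finite $P$ of $\prod_{p\in P}\prod_{k\geq 0}(1-p^{-2k-1})$ is $0$, and taking this infimum in the displayed inequality forces $\limsup_{n\ra\infty}\P(S\isom G)=0$, which is the assertion.

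I do not expect any real obstacle here; the proof is a two-line consequence of Corollary~\ref{C:Main}. The only points needing a moment's care are the set inclusion $\{S\isom G\}\subseteq\{S_P\isom G\}$ and the fact that $\prod_p(1-p^{-1})=0$, and it is worth noting that no interchange of limits is involved: for each fixed $P$ one first passes to $\limsup_n$, and only afterward takes the infimum over $P$.
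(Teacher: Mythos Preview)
Your proof is correct and follows essentially the same approach as the paper's: both bound $\P(S\isom G)$ via Corollary~\ref{C:Main} applied to a growing finite set of primes and then use $\prod_p(1-p^{-1})=0$ to force the bound to zero. The only cosmetic difference is that the paper takes its prime set $P_N$ to consist of primes \emph{not} dividing $|G|$ and bounds by $\P(S_{P_N}\text{ trivial})$, whereas you include the primes dividing $|G|$ and bound by $\P(S_P\isom G)$; these amount to the same thing, and your use of $\limsup$ is if anything slightly more careful.
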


\begin{proof}
Let $P_N$ be the set of primes $\leq N$ \emph{not} dividing the order of $G$.
Let $S_N$ be the sum of the Sylow $p$-subgroups of $S$ for $p\in P_N$.
Then
$$
\lim_{n\ra \infty}\P(S\isom G)\leq \lim_{n\ra \infty}\P(S_N \textrm{ trival})=\prod_{p\in P_N}
\prod_{k\geq 0}(1-p^{-2k-1}),
$$
where the last equality is by Corollary~\ref{C:Main}.
In particular, since the product
$
\prod_{p\in P_N} (1-p^{-1})
$
goes to $0$ as $N\ra \infty$, we can conclude the corollary.
\end{proof}

Also taking $a=p$ for a prime $p$ in Corollary~\ref{C:first}, we conclude the following on the distribution of $p$-ranks of sandpile groups.  
\begin{corollary}\label{C:prank}
Let $p$ be a prime.
 Let $\Gamma\in G(n,q)$ be a random graph with sandpile group $S$. Let $H_n$ be a uniform random $n \times n$ symmetric matrix with entries in $\Z/p\Z$.  Then for every non-negative integer $r$
$$
\lim_{n\ra\infty} \P(rank(S\tensor \Z/p\Z)=r) =\lim_{n\ra\infty} \P(rank(H_n)=n-r)=p^{-\frac{r(r+1)}{2}} \prod_{i=r+1}^{\infty} (1-p^{-i}) \prod_{i=1}^{\infty} (1-p^{-2i})^{-1} .
$$
\end{corollary}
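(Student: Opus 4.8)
The plan is to deduce Corollary~\ref{C:prank} from Corollary~\ref{C:first} in the special case $a=p$, where $\Z/a\Z=\F_p$ is a field and every group-theoretic statement collapses to a statement about dimensions of $\F_p$-vector spaces.

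First I would instantiate Corollary~\ref{C:first} with $a=p$. For each integer $r\ge 0$ this gives
\[
\lim_{n\to\infty}\P\bigl(S\otimes\Z/p\Z\isom(\Z/p\Z)^r\bigr)=\lim_{n\to\infty}\P\bigl(\cok(H_n)\isom(\Z/p\Z)^r\bigr),
\]
where $H_n$ is a uniform random symmetric matrix over $\F_p$. Since $\F_p$ is a field, $S\otimes\Z/p\Z$ is a finite-dimensional $\F_p$-vector space, so $S\otimes\Z/p\Z\isom(\Z/p\Z)^r$ is equivalent to $\operatorname{rank}(S\otimes\Z/p\Z)=r$; likewise $\cok(H_n)=\F_p^n/\cs(H_n)$ has dimension $n-\operatorname{rank}(H_n)$, so $\cok(H_n)\isom(\Z/p\Z)^r$ is equivalent to $\operatorname{rank}(H_n)=n-r$. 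These two remarks give exactly the first two equalities asserted in the corollary, and the only thing to get right is the direction of the rank statement (rank $r$ for $S\otimes\Z/p\Z$, but rank $n-r$ for the $n\times n$ matrix $H_n$), which comes from $\dim\cok(H_n)=n-\operatorname{rank}(H_n)$.

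It then remains to put $\lim_{n\to\infty}\P(\operatorname{rank}(H_n)=n-r)$ in closed form, i.e.\ to identify the limiting distribution of the corank of a uniform random symmetric matrix over $\F_p$. This is classical and I would not reprove it: the number of symmetric $n\times n$ matrices over $\F_p$ of each given rank is known by the same bookkeeping on square classes of nondegenerate forms that underlies the MacWilliams count of invertible symmetric matrices cited in Lemma~\ref{L:Aut}, and letting $n\to\infty$ collapses the sum to $p^{-r(r+1)/2}\prod_{i=r+1}^{\infty}(1-p^{-i})\prod_{i=1}^{\infty}(1-p^{-2i})^{-1}$; see \cite{MacWilliams1969} and \cite{Clancy2014}. (One could also obtain the same value internally, by summing the closed form of Corollary~\ref{C:Main} over all finite abelian $p$-groups of $p$-rank equal to $r$ and simplifying the resulting $q$-series.) Thus the only step with genuine---and entirely routine---content is this last numerical identification, which is not original; the rest is the bookkeeping of the preceding paragraph.
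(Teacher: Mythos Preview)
Your proposal is correct and follows essentially the same approach as the paper: apply Corollary~\ref{C:first} with $a=p$ to translate group isomorphism into rank conditions, and then invoke MacWilliams's count \cite{MacWilliams1969} of symmetric matrices over $\F_p$ by rank to obtain the closed form. The paper's own proof is a single sentence citing the same formula from \cite{MacWilliams1969}.
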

\begin{proof}
We have the second equality because the number of symmetric $n\times n$ matrices over $\Z/p\Z$ with
rank $n-r$ is  (by \cite[Theorem 2]{MacWilliams1969})
$
p^{\frac{n(n+1)}{2}-\frac{r(r+1)}{2}} \prod_{i=1}^{\lfloor (n-r)/2 \rfloor} (1-p^{-2i})^{-1} \prod_{i=r+1}^{n} (1-p^{-i})
.
$

\end{proof}

We can also conclude an asymptotic upper bound on the probability that the sandpile group is cyclic.
\begin{corollary}\label{C:cyclic}
  Let $\Gamma\in G(n,q)$ be a random graph with sandpile group $S$.
Then
$$
\lim_{n\ra \infty} \P(S \textrm{ cyclic})\leq \zeta(3)^{-1}\zeta(5)^{-1}\zeta(7)^{-1}\zeta(9)^{-1}\cdots.
$$
\end{corollary}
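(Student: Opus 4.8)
The plan is to bound $\P(S\text{ cyclic})$ from above by the probability of the much weaker event that the mod-$p$ reduction of $S$ has $\F_p$-rank at most $1$ for every prime $p$ in a fixed finite set $P$, to pass to the limit in $n$ using the results already established, and then to let $P$ exhaust all primes.

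First I would record that for every $n$ and every finite set of primes $P$, if $S$ is cyclic then $S\tensor\Z/p\Z$ is a cyclic $\Z/p\Z$-module, hence has rank at most $1$ over $\F_p$, for each $p\in P$; therefore
$$
\P(S\text{ cyclic})\le\P\bigl(\operatorname{rank}(S\tensor\Z/p\Z)\le1\text{ for all }p\in P\bigr).
$$
Writing $a=\prod_{p\in P}p$, the right-hand event is the disjoint union of the $2^{|P|}$ events $S\tensor\Z/a\Z\isom\bigoplus_{p\in P}(\Z/p\Z)^{r_p}$ with each $r_p\in\{0,1\}$. By Corollary~\ref{C:first}, each such event has a limiting probability as $n\to\infty$, equal to $\lim_{n\to\infty}\P(\cok(H_n)\isom G)$ for $H_n$ a uniform random symmetric matrix over $\Z/a\Z$; since $\cok(H_n)\isom\bigoplus_{p\in P}\cok(H_n^{(p)})$ by the Chinese Remainder Theorem, with the $H_n^{(p)}$ independent uniform random symmetric matrices over $\Z/p\Z$, these limiting probabilities factor over $p\in P$. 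Summing over the $2^{|P|}$ groups then yields
$$
\lim_{n\to\infty}\P\bigl(\operatorname{rank}(S\tensor\Z/p\Z)\le1\ \forall p\in P\bigr)=\prod_{p\in P}\lim_{n\to\infty}\P\bigl(\operatorname{rank}(S\tensor\Z/p\Z)\le1\bigr).
$$

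Next I would evaluate the single-prime factor with Corollary~\ref{C:prank}: adding the $r=0$ and $r=1$ values given there and factoring out $\prod_{i\ge2}(1-p^{-i})\prod_{i\ge1}(1-p^{-2i})^{-1}$ leaves $(1-p^{-1})+p^{-1}=1$, so, using $\prod_{i\ge1}(1-p^{-i})=\prod_{k\ge0}(1-p^{-2k-1})\prod_{i\ge1}(1-p^{-2i})$,
$$
\lim_{n\to\infty}\P\bigl(\operatorname{rank}(S\tensor\Z/p\Z)\le1\bigr)=\frac{1}{1-p^{-1}}\prod_{k\ge0}(1-p^{-2k-1})=\prod_{k\ge1}(1-p^{-2k-1}).
$$
Combining the two displayed identities gives $\limsup_{n\to\infty}\P(S\text{ cyclic})\le\prod_{p\in P}\prod_{k\ge1}(1-p^{-2k-1})$ for every finite set of primes $P$. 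Finally, taking $P=\{p:p\le N\}$ and letting $N\to\infty$, the right-hand side decreases to the absolutely convergent double product $\prod_{k\ge1}\prod_{p}(1-p^{-2k-1})=\prod_{k\ge1}\zeta(2k+1)^{-1}=\zeta(3)^{-1}\zeta(5)^{-1}\zeta(7)^{-1}\cdots$, which is the asserted bound (read as a bound on $\limsup_{n\to\infty}$, hence a fortiori on the limit if it exists).

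The one genuinely delicate point — and the reason the conclusion is an inequality rather than an equality — is that we do not interchange the two limiting processes $n\to\infty$ and $P\uparrow\{\text{all primes}\}$: monotonicity in $P$ yields the bound in a single direction only. A matching lower bound would require understanding the Sylow $p$-subgroups of $S$ for primes $p$ that are large relative to $n$, which is outside the reach of the moment methods used here, so I would not attempt it.
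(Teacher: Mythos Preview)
Your proof is correct and follows essentially the same route as the paper's: both bound cyclicity by the event that $S\otimes\Z/a\Z$ is cyclic for $a=\prod_{p\in P_N}p$ (equivalently, that each $p$-rank is $\le 1$), invoke Corollary~\ref{C:first} to pass to uniform symmetric matrices over $\Z/a\Z$, factor over primes, and evaluate the single-prime factor to $\prod_{k\ge1}(1-p^{-2k-1})$ before letting $N\to\infty$. The only cosmetic difference is that you quote Corollary~\ref{C:prank} for the single-prime value whereas the paper recomputes it from MacWilliams directly; your explicit remark that the argument yields only a $\limsup$ bound is in fact more careful than the paper's phrasing.
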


\begin{proof}
Let $P_N$ be the set of primes $\leq N$.
Let $S_N$ be the sum of the Sylow $p$-subgroups of $S$ for $p\in P_N$.
Then
$$
\lim_{n\ra \infty}\P(S \textrm{ cyclic})\leq \lim_{n\ra \infty}\P(S_N \textrm{ cyclic}).
$$

We apply Corollary~\ref{C:first} with $a$ the product of the primes in $P_N$ and add over all $G$ cyclic with exponent dividing $a$.  We have
$$
\sum_{G \textrm{ cylic}, aG=0} \P(cok(H_n)\isom G)=\prod_{p\in P_N} \left( 
\P(cok(H_n \pmod{p} )\isom 1) +\P(cok(H_n \pmod{p} )\isom \Z/p\Z)
\right).
$$
By \cite[Theorem 2]{MacWilliams1969}, as above, we have
\begin{align*}
&\lim_{n\ra\infty} \left(\P(cok(H_n \pmod{p} )\isom 1) +\P(cok(H_n \pmod{p} )\isom \Z/p\Z) \right)\\=&
 \prod_{i=1}^{\infty} (1-p^{-2i})^{-1} \prod_{i=1}^{\infty} (1-p^{-i})
+
p^{-1} \prod_{i=1}^{\infty} (1-p^{-2i})^{-1} \prod_{i=2}^{\infty} (1-p^{-i})
\\
 =&
 \prod_{i=1}^{\infty}  (1-p^{-2i-1}) .
\end{align*}

So, 
$$
\lim_{n\ra \infty}\P(S \textrm{ cyclic})\leq \prod_{p\in P_N}   \prod_{i=1}^{\infty}  (1-p^{-2i-1}).
$$
Taking the limit as $N\ra\infty$, we obtain the corollary.
\end{proof}

Similarly, we can obtain an asymptotic upper bound for the probability that the number of spanning trees is square-free.
\begin{corollary}\label{C:sqf}
  Let $\Gamma\in G(n,q)$ be a random graph with sandpile group $S$.
Then
$$
\lim_{n\ra \infty} \P(|S| \textrm{ square-free})\leq \zeta(2)^{-1}\zeta(3)^{-1}\zeta(5)^{-1}\zeta(7)^{-1}\zeta(9)^{-1}\cdots.
$$
\end{corollary}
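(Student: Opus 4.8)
The plan is to follow the proof of Corollary~\ref{C:cyclic} verbatim in structure, replacing the event ``$S_p$ cyclic at each prime'' by the strictly stronger event ``$p^2\nmid|S|$ at each prime.'' Fix $N$, let $P_N$ be the set of primes $\le N$, and let $S_N$ be the sum of the Sylow $p$-subgroups of $S$ for $p\in P_N$. Since $|S_N|$ divides $|S|$, squarefreeness of $|S|$ forces squarefreeness of $|S_N|$, so $\P(|S|\text{ square-free})\le\P(|S_N|\text{ square-free})$ for every $n$, and hence
$$
\lim_{n\to\infty}\P(|S|\text{ square-free})\le\lim_{n\to\infty}\P(|S_N|\text{ square-free}).
$$
The limit on the right exists because $\{|S_N|\text{ square-free}\}$ is a disjoint union of finitely many events $S_N\isom G$ (one for each subset of $P_N$, taking $G=\bigoplus_{p}\Z/p\Z$ over that subset), each of which has a limiting probability by Corollary~\ref{C:Main}.

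Next I would observe that $|S_N|$ is square-free precisely when, for each $p\in P_N$, the Sylow $p$-subgroup $S_p$ is either $1$ or $\Z/p\Z$. Applying Corollary~\ref{C:Main} with $P=P_N$ (asymptotic independence of the Sylow subgroups at the finitely many primes of $P_N$, together with the explicit formula) gives
$$
\lim_{n\to\infty}\P(|S_N|\text{ square-free})=\prod_{p\in P_N}\Big(\lim_{n\to\infty}\P(S_p\isom 1)+\lim_{n\to\infty}\P(S_p\isom\Z/p\Z)\Big).
$$

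Then I would evaluate the two limiting probabilities from Theorem~\ref{T:Main}. For $G=1$ the fraction in Theorem~\ref{T:Main} is $1$, so $\lim_n\P(S_p\isom 1)=\prod_{k\ge0}(1-p^{-2k-1})$. For $G=\Z/p\Z$ we have $|G|=p$, $|\Aut(G)|=p-1$, and there are exactly $p-1$ symmetric perfect bilinear pairings $\Z/p\Z\times\Z/p\Z\to\C^*$ (determined by the nonzero value of $\phi(1,1)\in\mu_p$), so the fraction is $(p-1)/(p(p-1))=p^{-1}$ and $\lim_n\P(S_p\isom\Z/p\Z)=p^{-1}\prod_{k\ge0}(1-p^{-2k-1})$; this also matches Equation~\eqref{E:main}. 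Adding, each factor above is $(1+p^{-1})\prod_{k\ge0}(1-p^{-2k-1})$, and using $(1+p^{-1})(1-p^{-1})=1-p^{-2}$ this equals $(1-p^{-2})\prod_{k\ge1}(1-p^{-2k-1})=(1-p^{-2})(1-p^{-3})(1-p^{-5})\cdots$. Therefore
$$
\lim_{n\to\infty}\P(|S_N|\text{ square-free})=\prod_{p\in P_N}(1-p^{-2})(1-p^{-3})(1-p^{-5})(1-p^{-7})\cdots.
$$
Letting $N\to\infty$ and recognizing $\prod_p(1-p^{-s})=\zeta(s)^{-1}$ for $s=2$ and for each odd $s\ge3$ yields the claimed bound $\zeta(2)^{-1}\zeta(3)^{-1}\zeta(5)^{-1}\zeta(7)^{-1}\cdots$.

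There is no genuine obstacle here; the argument is structurally identical to Corollary~\ref{C:cyclic}. The only points needing a little care are: (i) the monotone reduction from $S$ to $S_N$, which is why one obtains only an upper bound—consistent with the statement, since the present methods give no control over Sylow $p$-subgroups for $p$ large compared to $n$; (ii) the passage to the limit in $N$ inside the infinite product, handled exactly as in the proof of Corollary~\ref{C:cyclic}; and (iii) the elementary pairing count and the identity $(1+p^{-1})(1-p^{-1})=1-p^{-2}$ that turn the product of Theorem~\ref{T:Main} values into the advertised Euler products.
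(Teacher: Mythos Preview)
Your proposal is correct and follows essentially the same argument as the paper's own proof: reduce to $S_N$ for primes $p\le N$, sum Corollary~\ref{C:Main} over the groups $G$ with $|G|$ square-free and supported on $P_N$ to get $\prod_{p\in P_N}(1+p^{-1})\prod_{k\ge0}(1-p^{-2k-1})=(1-p^{-2})\prod_{k\ge1}(1-p^{-2k-1})$, and then let $N\to\infty$. The only difference is that you spell out the computation of the two Theorem~\ref{T:Main} values and the pairing count for $\Z/p\Z$ in slightly more detail than the paper does.
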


\begin{proof}
Let $P_N$ be the set of primes $\leq N$.
Let $S_N$ be the sum of the Sylow $p$-subgroups of $S$ for $p\in P_N$.
Then
$$
\lim_{n\ra \infty}\P(S \textrm{ square-free})\leq \lim_{n\ra \infty}\P(|S_N| \textrm{ square-free}).
$$
By summing Corollary~\ref{C:Main} over all $G$ such that $|G|$ has all prime factors in $P_N$ and $|G|$ is square-free, we have 
$$
\lim_{n\ra \infty}\P(|S_N| \textrm{ square-free}) =\prod_{p\in P_N} \left(  (1+p^{-1})\prod_{k\geq 0}(1-p^{-2k-1} )\right)
=\prod_{p\in P_N} \left(  (1-p^{-2})\prod_{k\geq 1}(1-p^{-2k-1} )\right).
$$
Taking the limit as $N\ra\infty$, we obtain the corollary.
\end{proof}

\begin{remark}\label{R:Mat}
Of course, all of the corollaries in this section also follow if $S$ is replaced by the cokernel of a random matrix satisfying the hypotheses of Theorem~\ref{T:MomMat} (using Theorem~\ref{T:MomMat} in place of Theorem~\ref{T:Mom}).  In this case, the analog of Corollary~\ref{C:prank} is a result announced by Maples \cite{Map13b}.
\end{remark}

It would be nice to know the rest of the limits for uniform random matrices that occur in Lemma~\ref{C:first}.  More specifically, let $H_n$ be a uniform random $n \times n$ symmetric matrix with entries in $\Z/a\Z$.  What is
$$
\lim_{n\ra\infty} \P(\cok(H_n) \isom G)?
$$
Above we have seen the answer when $a$ is a prime, and when every prime dividing the exponent of $G$ divides $a$ to at least one higher power than it divides the exponent of $G$.

\subsection*{Acknowledgements} 
The author would like to thank Sam Payne, Betsy Stovall,  Jordan Ellenberg, Philip Matchett Wood, Benedek Valko, and Steven Sam for useful conversations regarding the work in this paper, and Sam Payne, Philip Matchett Wood, Lionel Levine, Van Vu, Dino Lorenzini, and Karola M\'{e}sz\'{a}ros for helpful comments on the exposition.
This work was done with the support of an American Institute of Mathematics Five-Year Fellowship and National Science Foundation grants DMS-1147782 and DMS-1301690.

\newcommand{\etalchar}[1]{$^{#1}$}
\def\cprime{$'$}

\end{document}